\definecolor{darkblue}{rgb}{0.96, 0.96, 0.86}
\definecolor{darkgray}{rgb}{1.00, 0.97, 0.90}
\newtheorem{thm}{Theorem}[section]
\newtheorem{lem}[thm]{Lemma}
\newtheorem{pr}[thm]{Proposition}
\newtheorem{cor}[thm]{Corollary}
\theoremstyle{definition}
\newtheorem{re}{Remark}[section]
\newtheorem{cx}{Conjecture}[section]
\title{The second largest eigenvalue of normal Cayley graphs on symmetric groups generated by cycles}
\author{Yuxuan Li, Binzhou Xia and Sanming Zhou\\ 
School of Mathematics and Statistics, The University of Melbourne, Parkville, VIC 3010, Australia}
\begin{document}
\maketitle
\openup 0.35 \jot 

\renewcommand{\thefootnote}{\empty}
\footnotetext{E-mail addresses: yuxuan11@student.unimelb.edu.au (Yuxuan Li), binzhoux@unimelb.edu.au (Binzhou Xia), sanming@unimelb.edu.au (Sanming Zhou)}

\begin{abstract}
We study the normal Cayley graphs $\mathrm{Cay}(S_n, C(n,I))$ on the symmetric group $S_n$, where $I\subseteq \{2,3,\ldots,n\}$ and $C(n,I)$ is the set of all cycles in $S_n$ with length in $I$. We prove that the strictly second largest eigenvalue of $\mathrm{Cay}(S_n,C(n,I))$ can only be achieved by at most four irreducible representations of $S_n$, and we determine further the multiplicity of this eigenvalue in several special cases. As a corollary, in the case when $I$ contains neither $n-1$ nor $n$ we know exactly when $\mathrm{Cay}(S_n, C(n,I))$ has the Aldous property, namely the strictly second largest eigenvalue is attained by the standard representation of $S_n$, and we obtain that $\mathrm{Cay}(S_n, C(n,I))$ does not have the Aldous property whenever $n \in I$. As another corollary of our main results, we prove a recent conjecture on the second largest eigenvalue of $\mathrm{Cay}(S_n, C(n,\{k\}))$ where $2 \le k \le n-2$. 
\end{abstract}

\section{Introduction}
\label{sec:Introduction}


	

In this paper all graphs are finite, undirected and simple, and all groups are finite. Let $\Gamma=(V(\Gamma),E(\Gamma))$ be a graph with vertex set $V(\Gamma)$ and edge set $E(\Gamma)$. Then all eigenvalues of the adjacency matrix $A(\Gamma)$ of $\Gamma$ are real, and they are referred to as the \emph{eigenvalues} of $\Gamma$. We always arrange these eigenvalues in non-ascending order as $\lambda_1\geq \lambda_2\geq \cdots\geq \lambda_n$, where $n = |V(\Gamma)|$ is the order of $\Gamma$. Whenever we want to stress the dependence of the $i$-th largest eigenvalue of a graph $\Gamma$ or a real symmetric matrix $M$, we write $\lambda_i (\Gamma)$ or $\lambda_i(M)$ in place of $\lambda_i$. It is known that the largest eigenvalue of any regular graph is equal to its degree with multiplicity identical to the number of connected components of the graph. As in \cite{LXZ}, we define the \emph{strictly second largest eigenvalue} of a regular graph to be the largest eigenvalue strictly smaller than the degree of the graph.


Let $G$ be a finite group with identity element $e$, and $S$ an inverse-closed subset of $G\setminus \{e\}$. The \emph{Cayley~graph} on $G$ with respect to $S$, denoted by $\mathrm{Cay}(G, S)$, is the $|S|$-regular graph with vertex set $G$ and edge set $\{\{g,gs\}~|~g\in G, s\in S\}$. It is readily seen that $\mathrm{Cay}(G,S)$ is connected if and only if its \emph{connection set} $S$ is a generating subset of $G$. When $S$ is closed under conjugation, the Cayley graph $\mathrm{Cay}(G,S)$ is said to be \emph{normal}. 

It is widely known that the expansion of a graph $\Gamma$ can be measured by its \emph{isoperimetric number}, which is defined as $h(\Gamma) = \min\left\{\frac{|\partial S|}{|S|}~|~\emptyset \ne S \subset V(\Gamma), |S| \le \frac{|V(\Gamma)|}{2}\right\}$, where $\partial S=\{\{x,y\}\in E(\Gamma)~|~x\in S, y\in V(\Gamma)\setminus S\}$. It is also widely known that the isoperimetric number of a connected $k$-regular graph $\Gamma$ is determined by its \emph{spectral gap} $k-\lambda_2(\Gamma)$ thanks to the following well-known inequalities (\cite{A,AM,D,M}): $\frac{k-\lambda_{2}(\Gamma)}{2} \leq h(\Gamma) \leq \sqrt{2k(k-\lambda_{2}(\Gamma))}$. Thus it is of great importance to study the second largest eigenvalue of a connected regular graph. In particular, since many important expanders are Cayley graphs, the second largest eigenvalue of Cayley graphs has been a focus of study for a long time, where, roughly speaking, an expander is a graph with small degree and large isoperimetric number. See \cite{HLW} for a survey on expander graphs with applications and \cite[Section 8]{LZ} for a collection of results on the second largest eigenvalue of Cayley graphs. The celebrated Aldous' spectral gap conjecture asserts that the second largest eigenvalue of any connected Cayley graph on the symmetric group $S_n$ with respect to a set of transpositions is achieved by the standard representation of $S_n$. This conjecture in its general form was proved in \cite{CLR} after nearly twenty years of standing. In general, a Cayley graph $\mathrm{Cay}(S_n,S)$ on $S_n$ is said to have the \emph{Aldous property} \cite{LXZ} if its strictly second largest eigenvalue is attained by the standard representation of $S_n$. In \cite{LXZ}, the authors of the present paper identified three families of normal Cayley graphs on $S_n$ with the Aldous property, one of which can be considered as a generalization of the ``normal" case of Aldous' spectral gap conjecture.

Aldous' spectral gap conjecture has inspired much interest in determining the exact value of the second largest eigenvalues of some connected Cayley graphs on symmetric or alternating groups with connection set not necessarily formed by transpositions only. For $1\leq i\leq j\leq n$, let $r_{i,j}\in S_n$ be the permutation which maps $i, i+1, \ldots, j-1, j$ to $j, j-1, \ldots, i+1, i$, respectively, and fixes all other points in $[n] = \{1, 2, \ldots, n\}$. The Cayley graphs on $S_n$ with connection sets $\{r_{1,j}~|~ 2\leq j\leq n\}$ and $\{r_{i,j}~|~1\leq i< j \leq n\}$ are called the \emph{pancake graph} $P_n$ and \emph{reversal graph} $R_n$, respectively. In \cite{C1}, Cesi determined among other things the second largest eigenvalue of $P_n$. This result was then generalized by Chung and Tobin \cite{CT} to a family of graphs which contains all pancake graphs. In the same paper, Chung and Tobin also determined the second largest eigenvalue of $R_n$ by recursively decomposing $R_n$ into $P_n$ and copies of $R_{n-1}$. This method was further used by Huang and Huang \cite{HH} to determine the second largest eigenvalues of the Cayley graphs on the alternating group $A_n$ with connection sets $\{(1~2~i), (1~i~2)~|~ 3 \leq  i \leq  n\}$, $\{(1~ i~ j), (1~ j~ i)~|~ 2\leq i< j\leq n\}$ and $\{(i~ j~ k), (i~ k~ j)~|~1\leq i<  j<k\leq n\}$, respectively. With the help of the representation theory of symmetric groups, Siemons and Zalesski \cite{SZ} determined the second largest eigenvalue of $\mathrm{Cay}(G, C(n,k))$, where $G=S_n$ or $A_n$, and $k=n$ or $n-1$, with $C(n,k)$ the set of all $k$-cycles in $S_n$. In \cite[Theorem 1.2]{MR}, Maleki and Razafimahatratra proved that if $n-k
\ge 2$ is relatively small compared to $n$ then the second largest eigenvalue of $\mathrm{Cay}(S_n, C(n,k))$ is achieved by the standard representation of $S_n$. They conjectured \cite[Conjecture 1.4]{MR} that the same result holds for any $k$ between $2$ and $n-2$, that is, for any $n \ge 4$ and $2 \leq k \leq n-2$, the strictly second largest eigenvalue of $\mathrm{Cay}(S_n, C(n,k))$ is attained by the standard representation of $S_n$, and its value is $\frac{n-k-1}{n-1}\binom{n}{k}(k-1)!$. As proved earlier, this conjecture is true for $k = 2$ \cite{PS}, $k=3$ \cite{HH} (see also \cite[Remark 4.1]{LXZ}) and $k = 4$ \cite{HH2}, and in \cite{MR} it was proved that it is also true for $k=5$.

Motivated by the researches above, we study the normal Cayley graphs $\mathrm{Cay}(S_n, C(n,I))$ in this paper, where $\emptyset \ne I\subseteq \{2,3,\ldots,n\}$ and 
\begin{equation}
\label{eq:CnI}
C(n,I)=\cup_{k\in I} C(n,k)
\end{equation}
with $C(n,k)$ the set of all $k$-cycles in $S_n$. We prove that for any $\emptyset \ne I\subseteq \{2,3,\ldots,n\}$ the strictly second largest eigenvalue of $\mathrm{Cay}(S_n, C(n,I))$ can only be achieved by at most four different irreducible representations of $S_n$, and for some special subsets $I\subseteq \{2,3,\ldots,n\}$ we obtain further the exact value of this eigenvalue together with its multiplicity (Theorems \ref{cx:I_(n-2)}, \ref{thm:n-1}, \ref{thm:n} and \ref{thm:n_n-1}). As a corollary, we give a necessary and sufficient condition for $\mathrm{Cay}(S_n, C(n,I))$ to possess the Aldous property in the case when $I$ contains neither $n$ nor $n-1$ (Corollary~\ref{cor:aldous_n-2}), and we obtain that this graph does not have the Aldous property whenever $n \in I$ (Corollaries \ref{cor:aldous_n} and \ref{cor:aldous_n-1_n}). As another corollary, we prove the abovementioned conjecture \cite[Conjecture 1.4]{MR} (Corollary \ref{cor1}) and thus solve an open problem posed by Siemons and Zalesski in \cite{SZ}. A summary of our main results can be found in Table \ref{tab:main}, where the third column shows all possible partitions of $n$  which achieve the strictly second largest eigenvalue and the last column indicates the multiplicity of this eigenvalue. 

\begin{table}[thp]
\renewcommand\arraystretch{2.0}
\centering
\begin{tabular}{|c|c| c | c|}
\hline
\multicolumn{2}{|c|}{ } & &  \\ 
\multicolumn{2}{|c|}{~~~~~\bf{Subfamilies}~~~~~ } & ~~~~\bf{Partitions}~~~~ & ~~~~\bf{Multiplicity}~~~~  \\
\multicolumn{2}{|c|}{ } & &  \\ \hline
\multirow{6}*{$I \cap \{n-1,n\} = \emptyset$} & $I \cap N_2 = \emptyset$ & $(n-1,1)$ and $(2,1^{n-2})$ & $2(n-1)^2$\\ \cline{2-4}
 &$I=\{2,3\}$ & $(n-1,1)$ and $(1^n)$ & $(n-1)^2+1$\\ \cline{2-4}
 &$I\ne\{2,3\}$  & \multirow{3}*{$(1^n)$} & \multirow{3}*{1} \\ 
 & $I_{\mathrm{max}} \in N_1$ & & \\ 
 & $I \cap N_2 \ne \emptyset$  & & \\ \cline{2-4}
 & $I_{\mathrm{max}} \in N_2$ & $(n-1,1)$ & $(n-1)^2$\\ \hline
\multirow{6}*{$n-1 \in I$, $n \notin I$} &$n \in N_2$ & \multirow{2}*{$(1^n)$}  & \multirow{2}*{$1$} \\
 & $I \cap N_2 \ne \emptyset$ & & \\ \cline{2-4}
 &  $n \in N_2$ & [$(n-1,1)$ and $(2,1^{n-2})$] or & \multirow{2}*{Unknown} \\  
 &  $I \cap N_2 = \emptyset$ & [$(n-3,2,1)$ and $(3,2,1^{n-3})$] & \\ \cline{2-4}
 &  \multirow{2}*{$n \in N_1$} & $(n-1,1)$,  $(n-3,2,1)$, & \multirow{2}*{Unknown}\\ 
 & &   $(2^2,1^{n-4})$ or $(2,1^{n-2})$ & \\ \hline
\multirow{5}*{$n-1 \notin I$, $n \in I$}&  $n \in N_1$ & \multirow{2}*{$(1^n)$}  & \multirow{2}*{$1$} \\
 & $I \cap N_2 \ne \emptyset$ & & \\ \cline{2-4}
  & $n\in N_1$  & \multirow{2}*{$(n-2,1^2)$ and $(3,1^{n-3})$}   & \multirow{2}*{$\frac{(n-1)^2(n-2)^2}{2}$} \\  
  & $I \cap N_2 = \emptyset$ & & \\ \cline{2-4}
 & $n\in N_2$  & $(n-2,1^2)$ or $(2,1^{n-2})$ & Unknown \\ \hline
\multirow{3}*{$\{n-1,n\}\subseteq I$} &$n \in N_2$ & $(1^n)$, $(n-2,1^2)$ or $(2,1^{n-2})$  & Unknown \\ \cline{2-4}
 &\multirow{2}*{ $n \in N_1$}  & $(1^n)$, $(n-2,1^2)$, $(3,1^{n-3})$   &\multirow{2}*{Unknown} \\ 
 & & or $(2^2,1^{n-4})$ & \\ \hline
\end{tabular}
\caption{Summary of the main results, where $\emptyset \ne I\subseteq \{2,3,\ldots,n-1,n\}$, $I_{\mathrm{max}}$ is the largest number in $I$, $N_1$ is the set of odd positive integers, and $N_2$ is the set of even positive integers}
\label{tab:main}
\end{table} 

The remainder of this paper is structured as follows. In Section 2, we give some basic definitions and present several known results that will play a key role in the proofs of our main results. Since the irreducible characters of $(n-1)$-cycles and $n$-cycles behave differently from that of cycles with length no more than $n-2$, we divide the family of graphs $\mathrm{Cay}(S_n,C(n,I))$ into four subfamilies and investigate them separately in four sections.
More explicitly, in Sections \ref{sec:proofs}--\ref{sec:n1n} we deal with the case where $I \cap \{n-1,n\} = \emptyset$, $n-1 \in I$ but $n \not \in I$, $n \in I$ but $n-1 \not \in I$, or $\{n-1,n\} \subseteq I$, respectively. As will be seen shortly, we will use tools from the representation and character theory of finite groups together with combinatorial techniques in the proofs of our main results.


\section{Preliminaries} 
\label{sec:preliminaries}
 
All definitions in this section can be found in \cite{Sagan}. 
In what follows we use 
$$\widehat{G}=\{\rho_1,\rho_2,\ldots,\rho_k\}$$ to denote a complete set of inequivalent (complex) irreducible matrix representations of a group $G$, with the convention that $\rho_1$ is the trivial representation. For any $\rho_i \in \widehat{G}$, the map $$\chi_i: g \mapsto \mathrm{Trace}(\rho_i(g)),\;\, g \in G$$ is the \emph{character} of $\rho_i$, and the ratio $$\tilde{\chi}_i(g):=\frac{\chi_i(g)}{\chi_i(e)}$$ is known as the \emph{normalized character} of $\rho_i$ on $g\in G$, where $\chi_i(e)$ equals the dimension $\dim\rho_i$ of $\rho_i$. Note that $\dim \rho_1 = 1$ for the trivial representation $\rho_1$.

The following proposition enables us to express the eigenvalues of any normal Cayley graph on $G$ in terms of the irreducible characters of $G$. 

\begin{pr}{\upshape \cite{PS,Z}}\label{prop:normal_eigenvalue}
Let $\{\chi_1,\chi_2,\ldots,\chi_k\}$ be a complete set of inequivalent irreducible characters of a group $G$. Then the eigenvalues of any normal Cayley graph $\mathrm{Cay}(G, S)$ on $G$ are given by 
\begin{equation*}
\mu_j=\frac{1}{\chi_j(e)}\sum_{s\in S}\chi_j(s)=\sum_{s\in S} \tilde{\chi}_j(s),\quad j=1,2,\ldots,k.
\end{equation*}
Moreover, the multiplicity of $\mu_j$ is equal to $\sum_{1\leq i\leq k,\, \mu_i=\mu_j}\chi_i(e)^2.$
\end{pr}
 
We say that the strictly second largest eigenvalue of $\mathrm{Cay}(G,S)$ is \emph{attained} or \emph{achieved} by $\rho_i \in \widehat{G}$ if
$$
\lambda_{c+1}(\mathrm{Cay}(G,S))=\sum_{s\in S} \tilde{\chi}_i(s),
$$
where $c=[G:\langle S\rangle ]$ is the index of the subgroup $\langle S\rangle$ in $G$. Note that the largest eigenvalue $|S|$ of $\mathrm{Cay}(G, S)$ has multiplicity $c$ as $\mathrm{Cay}(G, S)$
is the union of $c$ copies of the connected Cayley graph $\mathrm{Cay}(\langle S\rangle, S)$ with degree $|S|$. So $\lambda_{c+1}(\mathrm{Cay}(G,S))$ is indeed the strictly second largest eigenvalue of $\mathrm{Cay}(G, S)$. 

A \emph{partition} of a positive integer $n$ is a sequence of positive integers $\gamma=(\gamma_1,\gamma_2,\ldots,\gamma_m)$ satisfying $\gamma_1\ge \gamma_2\ge\cdots\ge\gamma_m$ and $n=\gamma_1+\gamma_2+\cdots+\gamma_m$.  We use $\gamma \vdash n$ to indicate that $\gamma$ is a partition of $n$ and let $c_i(\gamma)$ denote the number of terms in $\gamma$ which are equal to $i$. A \emph{Young~diagram} is a finite collection of boxes arranged in left-justified rows, with the row sizes weakly decreasing. The Young diagram associated to the partition $\gamma=(\gamma_1,\gamma_2,\ldots,\gamma_m)$ is the one that has $m$ rows and $\gamma_i$ boxes on the $i$-th row. Since there is a clear one-to-one correspondence between partitions and Young diagrams, we use the two terms interchangeably.

Every permutation $\sigma$ of $S_n$ has a decomposition into disjoint cycles. The \emph{cycle type} of $\sigma$ is the partition of $n$ whose parts are the lengths of the cycles in its decomposition. It is widely known that two elements of $S_n$ are conjugates if and only if they have the same cycle type. This means that the conjugacy classes of $S_n$ are characterized by the cycle types and thus correspond to the partitions of $n$. Denote by $C(S_n,\gamma)$ the conjugacy class of $S_n$ that corresponds to the partition $\gamma\vdash n$. We use $\mathrm{sgn}(\gamma)$ to denote the sign of the permutations in $C(S_n,\gamma)$. 

For each partition $\zeta\vdash n$, we use $\rho_\zeta$ to denote the Specht module of $S_n$ that corresponds to $\zeta$. It is well known that $\widehat{S_n}=\{\rho_\zeta~|~\zeta\vdash n\}$ is a complete list of inequivalent irreducible representations of $S_n$. The Specht modules $\rho_{(n)}$ and $\rho_{(1^n)}$ are called the \emph{trivial} and the \emph{sign representations} of $S_n$, respectively. The \emph{standard representation} of $S_n$ just refers to $\rho_{(n-1,1)}$. Let $\chi_\zeta(\cdot)$ and $\tilde{\chi}_\zeta(\cdot)$ denote the character and normalized character of $\rho_\zeta$, respectively.  We have $\chi_{(n)}(\sigma)=\tilde{\chi}_{(n)}(\sigma)= 1$ and $\chi_{(1^n)}(\sigma)=\tilde{\chi}_{(1^n)}(\sigma)=\mathrm{sgn}(\sigma)$ for any $\sigma\in S_n$. As $\chi_{\zeta}(\cdot)$ (respectively, $\tilde{\chi}_{\zeta}(\cdot)$) is a class function on $S_n$, we use $\chi_\zeta(\gamma)$ (respectively, $\tilde{\chi}_\zeta(\gamma)$) to indicate the value of $\chi_\zeta(\cdot)$ (respectively, $\tilde{\chi}_\zeta(\cdot)$) on the conjugacy class $C(S_n,\gamma)$. 

Denote by $a=(i, j)$ the box of a Young diagram $\zeta$ in the $i$-th row and $j$-th column. Then it has \emph{hook}
$$ 
H_{a}=H_{i, j}=\left\{\left(i, j^{\prime}\right)\in \zeta: j^{\prime} \geq j\right\} \cup\left\{\left(i^{\prime}, j\right)\in \zeta: i^{\prime} \geq i\right\}
$$
with corresponding \emph{hook length}
$$
h_{a}=h_{i, j}=\left|H_{i, j}\right|.
$$
To illustrate, if $\zeta=\left(4^{2}, 3^{3}, 1\right)$, then the dotted boxes in
\begin{table}[H]
\centering
\begin{tabular}{|l|lll}
\hline
$\ $ & \multicolumn{1}{l|}{} & \multicolumn{1}{l|}{} & \multicolumn{1}{l|}{} \\ \hline
 & \multicolumn{1}{l|}{$\bullet$} & \multicolumn{1}{l|}{$\bullet$} & \multicolumn{1}{l|}{$\bullet$} \\ \hline
 & \multicolumn{1}{l|}{$\bullet$} & \multicolumn{1}{l|}{} &                       \\ \cline{1-3}
 & \multicolumn{1}{l|}{$\bullet$} & \multicolumn{1}{l|}{} &                       \\ \cline{1-3}
 & \multicolumn{1}{l|}{$\bullet$} & \multicolumn{1}{l|}{} &                       \\ \cline{1-3}
 &                       &                       &                       \\ \cline{1-1}
\end{tabular}
\end{table} 
\noindent  are the hook $H_{2,2}$ with hooklength $h_{2,2}=6$. The following theorem states the well-known Hook-Length Formula for the dimension $\chi_\zeta(\mathbf{1})$ of any Specht module $\rho_\zeta\in \widehat{S_n}$, where $\mathbf{1}$ is the identity element of $S_n$.

\begin{thm}{\upshape \cite[Theorem 3.10.2]{Sagan} }\label{thm:Hook} If $\zeta\vdash n$, then
	\[\chi_\zeta(\mathbf{1})=\frac{n !}{\prod_{(i, j) \in \zeta} h_{i, j}}.\]

\end{thm}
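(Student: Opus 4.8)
The plan is to reduce the statement to a purely combinatorial counting identity and then establish that identity by the probabilistic ``hook walk'' argument of Greene, Nijenhuis and Wilf. First I would recall that, in the construction of the Specht module $\rho_\zeta$, the standard polytabloids $e_t$ indexed by the standard Young tableaux $t$ of shape $\zeta$ form a basis of $\rho_\zeta$. Consequently the dimension $\chi_\zeta(\mathbf{1})=\dim\rho_\zeta$ equals $f^\zeta$, the number of standard Young tableaux of shape $\zeta$. This converts the theorem into the claim that $f^\zeta=n!/\prod_{(i,j)\in\zeta}h_{i,j}$.

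Next, write $F(\zeta)=n!/\prod_{(i,j)\in\zeta}h_{i,j}$ and induct on $n$, the base case $n=1$ being immediate. For the inductive step I would use the branching rule: in any standard Young tableau the box holding the largest entry $n$ must sit at a removable corner, and deleting it yields a standard Young tableau of the smaller shape, so
$$f^\zeta=\sum_{c}f^{\zeta\setminus c},$$
where $c$ runs over the removable corners of $\zeta$. By the inductive hypothesis it then suffices to prove the matching recurrence $F(\zeta)=\sum_c F(\zeta\setminus c)$. The crucial observation is that deleting a corner $c=(r,s)$ decreases by exactly one the hook length of every box in row $r$ strictly to the left of $c$ and every box in column $s$ strictly above $c$, and leaves all other hook lengths unchanged, while $h_{r,s}=1$. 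Hence $\prod_{\zeta}h/\prod_{\zeta\setminus c}h'=\prod_{i<r}\tfrac{h_{i,s}}{h_{i,s}-1}\prod_{j<s}\tfrac{h_{r,j}}{h_{r,j}-1}$, and after dividing the desired recurrence through by $F(\zeta)$ it becomes the identity
$$\sum_{c=(r,s)}\frac{1}{n}\prod_{i<r}\frac{h_{i,s}}{h_{i,s}-1}\prod_{j<s}\frac{h_{r,j}}{h_{r,j}-1}=1.$$

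The heart of the proof, and the main obstacle, is establishing this last identity. For this I would invoke the hook walk: choose a starting box uniformly among the $n$ boxes of $\zeta$, and then repeatedly jump from the current box to a uniformly chosen other box of its hook, stopping upon reaching a corner. I would show that the probability this walk terminates at the corner $c=(r,s)$ is precisely the summand
$$\frac{1}{n}\prod_{i<r}\Bigl(1+\frac{1}{h_{i,s}-1}\Bigr)\prod_{j<s}\Bigl(1+\frac{1}{h_{r,j}-1}\Bigr),$$
after which the identity follows at once, since the termination probabilities over all corners sum to $1$. The delicate computation is the evaluation of this termination probability: conditioning on the set of rows and columns visited before absorption and summing a telescoping product over all trajectories consistent with that data expands exactly to the displayed product. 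Carefully organizing this conditioning argument, rather than the routine algebra in the reduction steps, is where the real work lies.
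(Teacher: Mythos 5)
The paper gives no proof of this statement; it is quoted verbatim from Sagan \cite[Theorem 3.10.2]{Sagan}, where the proof supplied is exactly the Greene--Nijenhuis--Wilf hook walk argument you describe. Your reduction to counting standard Young tableaux, the branching recurrence $f^\zeta=\sum_c f^{\zeta\setminus c}$, and the probabilistic evaluation of the corner absorption probabilities constitute that same standard proof, so your proposal is correct and follows essentially the same route as the cited source.
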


It is known \cite{F} that the character of any $\rho_\zeta\in \widehat{S_n}$ on any conjugacy class of $S_n$ is an integer with absolute value at most the dimension of $\rho_\zeta$. Hence $\tilde{\chi}_\zeta(\gamma)$ is a rational number in the interval $[-1,1]$ for all $\zeta \vdash n$ and $\gamma\vdash n$. For the convenience of the reader and in order to provide self-contained proofs, we include Table \ref{tab:tab1} from \cite{PP}, which gives the dimensions and characters of some Specht modules of $S_n$.  

\begin{table}[ht]
\centering
\begin{tabular}{c c c}
\toprule
$~~~~\zeta\vdash n~~~$ & $~~~~\dim \rho_\zeta=\chi_\zeta(\mathbf{1})~~~~$ & $~~~~\chi_\zeta(\gamma)~\text{with}~c_i(\gamma)=c_i~~~~$\\\toprule
$(n)$ & $1$ & $1$\\ \midrule
$(n-1,1)$ & $n-1$ & $c_1-1$\\\midrule
$(n-2,2)$ & $\frac{n(n-3)}{2}$ & $\frac{c_1(c_1-3)}{2}+c_2$\\\midrule
$(n-2,1^2)$& $\frac{(n-1)(n-2)}{2}$ & $\frac{(c_1-1)(c_1-2)}{2}-c_2$\\ \midrule
$(n-3,3)$& $\frac{n(n-1)(n-5)}{6}$ & $\frac{c_1(c_1-1)(c_1-5)}{6}+(c_1-1)c_2+c_3$\\ \midrule
$(n-3,2,1)$ & $\frac{n(n-2)(n-4)}{3}$ & $\frac{c_1(c_1-2)(c_1-4)}{3}-c_3$ \\ \midrule
$(n-3,1^3)$ & $\frac{(n-1)(n-2)(n-3)}{6}$ & $\frac{(c_1-1)(c_1-2)(c_1-3)}{6}-(c_1-1)c_2+c_3$ \\ \midrule
$(n-4,2,1^2)$ & $\frac{n(n-2)(n-3)(n-5)}{8}$ & $\frac{c_1(c_1-2)(c_1-3)(c_1-5)}{8} -\frac{(c_1^2-3c_1-1)c_2}{2}-\frac{c_2^2}{2}+c_4$ \\
\bottomrule
\end{tabular}
\vspace{0.2cm}
\caption{Dimensions and characters of some Specht modules of $\widehat{S_n}$}
\label{tab:tab1}
\end{table}

The \emph{conjugate} or \emph{transpose} of a partition $\zeta=(\zeta_1,\zeta_2,\ldots,\zeta_m)\vdash n$ is defined as $\zeta'=(\zeta_1',~\zeta_2',~\ldots,\zeta_h') \vdash n$, where $\zeta_i'$ is the length of the $i$-th column of $\zeta$. In other words, the Young diagram of $\zeta'$ is just the transpose of that of $\zeta$. 
The relation between $\chi_\zeta(\cdot)$ and $\chi_{\zeta'}(\cdot)$ is reflected in the following lemma.

\begin{lem}{\upshape (\cite[2.1.8]{JK}) }\label{lem:conjugate_character}
  For any $\zeta \vdash n$ and $\gamma\vdash n$, we have 
  \begin{equation*}
	\chi_{\zeta'}(\gamma)=\mathrm{sgn}(\gamma)\cdot\chi_\zeta(\gamma).
  \end{equation*} 
\end{lem}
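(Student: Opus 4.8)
The plan is to reduce the identity to the well-known fact that tensoring with the sign representation carries the Specht module $\rho_\zeta$ to the Specht module $\rho_{\zeta'}$ of the conjugate partition, that is,
\[
\rho_{\zeta'} \cong \rho_\zeta \otimes \rho_{(1^n)},
\]
where $\rho_{(1^n)}$ is the one-dimensional sign representation. Granting this isomorphism, the lemma follows in one line: the character of a tensor product is the product of the characters, and on the conjugacy class $C(S_n,\gamma)$ the character of the sign representation equals $\mathrm{sgn}(\gamma)$ (as already recorded in the excerpt, $\chi_{(1^n)}(\sigma)=\mathrm{sgn}(\sigma)$), so
\[
\chi_{\zeta'}(\gamma) = \chi_\zeta(\gamma)\cdot\mathrm{sgn}(\gamma).
\]
Thus the entire content of the statement lies in verifying the displayed isomorphism.

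To establish $\rho_{\zeta'} \cong \rho_\zeta \otimes \rho_{(1^n)}$, I would first observe that $\rho_{(1^n)} \otimes \rho_{(1^n)}$ is the trivial representation, so $\rho \mapsto \rho \otimes \rho_{(1^n)}$ is an involution on $\widehat{S_n} = \{\rho_\zeta : \zeta \vdash n\}$ which preserves both irreducibility and dimension, and hence induces an involution on the set of partitions of $n$. It then remains to identify this involution with conjugation $\zeta \mapsto \zeta'$. The most direct route is through the Young symmetrizer: fixing a tableau $t$ of shape $\zeta$ with row stabilizer $R_t$ and column stabilizer $C_t$, the module $\rho_\zeta$ is realized inside the group algebra by means of the row symmetrizer $a_t = \sum_{\sigma \in R_t}\sigma$ and the column antisymmetrizer $b_t = \sum_{\tau \in C_t}\mathrm{sgn}(\tau)\,\tau$. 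Transposing $t$ to a tableau $t'$ of shape $\zeta'$ interchanges rows with columns, so $R_{t'} = C_t$ and $C_{t'} = R_t$, while twisting the $S_n$-action by $\mathrm{sgn}$ interchanges the roles of symmetrizer and antisymmetrizer; combining these two interchanges yields the desired isomorphism.

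An alternative and arguably cleaner route uses the Frobenius characteristic map $\mathrm{ch}$, under which $\mathrm{ch}(\rho_\zeta) = s_\zeta$ is the Schur function indexed by $\zeta$. The standard involution $\omega$ on the ring of symmetric functions satisfies $\omega(s_\zeta) = s_{\zeta'}$, and on the representation side $\omega$ corresponds precisely to tensoring with $\rho_{(1^n)}$; hence $\mathrm{ch}(\rho_\zeta \otimes \rho_{(1^n)}) = \omega(s_\zeta) = s_{\zeta'} = \mathrm{ch}(\rho_{\zeta'})$, and the injectivity of $\mathrm{ch}$ forces the isomorphism. I expect the main obstacle to be not the final computation but the setup: whichever route one takes, one must first install a concrete model of the Specht modules (polytabloids and Garnir relations, or the characteristic map together with the properties of $\omega$) before the identity becomes immediate. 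Since all of this machinery is developed in \cite{Sagan}, in the write-up I would invoke the relevant results there rather than reproduce the full construction.
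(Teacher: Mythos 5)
Your proof is correct: the reduction to the isomorphism $\rho_{\zeta'} \cong \rho_\zeta \otimes \rho_{(1^n)}$ is exactly the standard argument, and either of your two routes to that isomorphism (Young symmetrizers with transposed tableaux, or the involution $\omega$ under the Frobenius characteristic) is sound. Note that the paper does not prove this lemma at all --- it simply cites \cite[2.1.8]{JK} --- so there is no in-paper argument to compare against; what you have written is the textbook proof that the cited reference supplies.
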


In particular, we have $\mathrm{dim} \rho_{\zeta'}=\chi_{\zeta'}(\mathbf{1})=\chi_{\zeta}(\mathbf{1})=\mathrm{dim} \rho_\zeta$. That is, $\rho_{\zeta'}$ has the same dimension as $\rho_\zeta$. The following two lemmas give the normalized characters on $n$-cycles and $(n-1)$-cycles, respectively. 

\begin{lem}{\upshape \cite[Lemma 4.10.3]{Sagan} }\label{lem:character_for_n_cycles}
	Suppose $\zeta$ and $\gamma$ are two partitions of $n$. If $\gamma=(n)$, then 
	\[\chi_\zeta(\gamma)=\begin{cases}
		(-1)^m,& \text{if}~\zeta=(n-m,1^m)~\text{with}~0\leq m\leq n-1;\\
		0,& \text{otherwise}.
	\end{cases}\]
\end{lem}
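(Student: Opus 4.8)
The plan is to invoke the Murnaghan–Nakayama rule, which computes $\chi_\zeta(\gamma)$ by successively stripping border strips (rim hooks) off the Young diagram $\zeta$ whose sizes are the parts of $\gamma$, accumulating a sign at each step. Since $\gamma=(n)$ consists of a single part equal to $n=|\zeta|$, the rule collapses to a single step: $\chi_\zeta((n))$ is the signed count of ways to remove one border strip of size $n$ from $\zeta$, where a border strip $\xi$ contributes the sign $(-1)^{\mathrm{ht}(\xi)}$ with $\mathrm{ht}(\xi)$ equal to one less than the number of rows that $\xi$ occupies. Because $\zeta$ has exactly $n$ boxes, a border strip of size $n$ must be all of $\zeta$; hence the character is nonzero precisely when $\zeta$ itself is a border strip, and in that case it equals the single sign $(-1)^{\mathrm{ht}(\zeta)}$.

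First I would recall that a border strip is a connected skew shape containing no $2\times 2$ block of boxes, and then observe that a straight (non-skew) Young diagram is a border strip if and only if it is a hook, i.e.\ of the form $(n-m,1^m)$ for some $0\le m\le n-1$. Indeed, any partition with two rows both of length at least $2$ contains a $2\times 2$ block in its top-left corner and so fails the condition, while every hook is clearly connected and $2\times 2$-free. This pins down exactly the partitions $\zeta$ for which $\chi_\zeta((n))\ne 0$ and simultaneously establishes that $\chi_\zeta((n))=0$ in all remaining cases.

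Finally, for $\zeta=(n-m,1^m)$ I would compute the height directly: the hook occupies rows $1$ through $m+1$, so it spans $m+1$ rows and $\mathrm{ht}(\zeta)=m$, giving $\chi_\zeta((n))=(-1)^m$. This completes the proof. (An equivalent route is the Frobenius formula, reading off the coefficients in the Schur expansion $p_n=\sum_{m=0}^{n-1}(-1)^m s_{(n-m,1^m)}$, but that expansion is itself most naturally justified by Murnaghan–Nakayama, so the direct approach above is cleaner.)

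The main—and essentially only—subtlety is fixing the sign convention correctly, namely that the height is the number of rows minus one. As a sanity check, the full column $(1^n)$ corresponds to $m=n-1$ and so receives sign $(-1)^{n-1}$, which agrees with $\chi_{(1^n)}((n))=\mathrm{sgn}((n))=(-1)^{n-1}$ since an $n$-cycle is a product of $n-1$ transpositions; likewise $(n)$ corresponds to $m=0$ and yields $(-1)^0=1=\chi_{(n)}((n))$. These two checks confirm the orientation of the formula.
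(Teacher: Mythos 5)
Your proof is correct: the paper states this lemma as a citation of \cite[Lemma 4.10.3]{Sagan} without reproducing a proof, and your argument via a single application of the Murnaghan--Nakayama rule (a size-$n$ border strip in an $n$-box diagram must be the whole diagram, a straight shape is a border strip iff it is a hook, and the hook $(n-m,1^m)$ has height $m$) is precisely the standard derivation given in the cited source. The sign convention and the sanity checks against $\chi_{(n)}$ and $\chi_{(1^n)}$ are all handled correctly.
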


\begin{lem}{\upshape \cite[Lemma 4.3]{SZ}} \label{lem:character_for_n-1_cycles}
	Suppose $\zeta$ and $\gamma$ are two partitions of $n$. If $\gamma=(n-1,1)$, then 
	\[\chi_\zeta(\gamma)=\begin{cases}
	    1,& \text{if}~\zeta=(n);\\
	    (-1)^{n-2},&\text{if}~\zeta=(1^n);\\
		(-1)^{m-1},& \text{if}~\zeta=(n-m,2,1^{m-2})~\text{with}~2\leq m\leq n-2;\\
		0,& \text{otherwise}.
	\end{cases}\]
\end{lem}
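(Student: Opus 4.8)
The plan is to reduce the computation to characters on a single long cycle in $S_{n-1}$ via the branching rule, and then to read off the answer from Lemma~\ref{lem:character_for_n_cycles}. A permutation of cycle type $(n-1,1)$ in $S_n$ fixes exactly one point; taking $S_{n-1}$ to be the stabiliser of that point, such a permutation becomes an $(n-1)$-cycle in $S_{n-1}$, and all of these are conjugate so the character value is well defined. The branching rule for symmetric groups gives $\mathrm{Res}^{S_n}_{S_{n-1}}\rho_\zeta=\bigoplus_{\mu}\rho_\mu$, where $\mu$ runs over all partitions of $n-1$ obtained by deleting a single corner box from $\zeta$. Evaluating both sides on an $(n-1)$-cycle $\sigma\in S_{n-1}$ yields
\begin{equation*}
\chi_\zeta((n-1,1))=\sum_{\mu=\zeta-\square}\chi_\mu((n-1)),
\end{equation*}
the sum being over the corners of $\zeta$. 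By Lemma~\ref{lem:character_for_n_cycles} applied to $S_{n-1}$, the summand $\chi_\mu((n-1))$ vanishes unless $\mu$ is a hook $(n-1-j,1^j)$, in which case it equals $(-1)^j$. Thus the problem becomes the combinatorial one of deciding, for each $\zeta\vdash n$, which corner removals turn $\zeta$ into a hook, and of summing the corresponding signs.

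Next I would carry out this enumeration. The key structural observation is that $\zeta-\square$ is a hook for some corner $\square$ precisely when $\zeta$ is itself obtained from a hook by adding one box, and adding a box to a hook produces either another hook or a \emph{double hook} $(n-m,2,1^{m-2})$ (the new box sitting in position $(2,2)$). I would then treat three families. If $\zeta$ is a proper hook $(n-k,1^k)$ with $1\le k\le n-2$, it has exactly two corners, at the end of the arm and the end of the leg; their removals give hooks contributing $(-1)^k$ and $(-1)^{k-1}$, which cancel, so $\chi_\zeta((n-1,1))=0$. The extreme hooks $(n)$ and $(1^n)$ have a single corner, giving the values $1$ and $(-1)^{n-2}$ respectively. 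If $\zeta=(n-m,2,1^{m-2})$ is a double hook, I would list its (up to three) corners and verify that only the removal of the box $(2,2)$ produces a hook, namely $(n-m,1^{m-1})$, whose leg length $m-1$ yields the sign $(-1)^{m-1}$, whereas removing the arm-end or the leg-end of $\zeta$ leaves another double hook and hence contributes $0$. Any remaining $\zeta$ is neither a hook nor a double hook, so no corner removal gives a hook and every summand vanishes, leaving $\chi_\zeta((n-1,1))=0$.

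I expect the main obstacle to be the bookkeeping in the double-hook case rather than any deep idea: one must check that the boundary instances still leave exactly one hook-producing corner with leg length $m-1$, namely $m=2$ (where $(n-2,2)$ has no singleton rows) and $n-m=2$ (where in $(2,2,1^{m-2})$ the box $(1,2)$ is blocked by $(2,2)$ and is not a corner). A convenient independent check on the signs is Lemma~\ref{lem:conjugate_character}: conjugation sends $(n-m,2,1^{m-2})$ to $(m,2,1^{n-m-2})$ and multiplies the character on $(n-1,1)$ by $\mathrm{sgn}((n-1,1))=(-1)^{n-2}$, and indeed $(-1)^{n-2}(-1)^{m-1}=(-1)^{(n-m)-1}$ agrees with the value the formula assigns to the conjugate partition; the same relation sends $(n)$ to $(1^n)$ and correctly turns $1$ into $(-1)^{n-2}$. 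As an alternative to branching, one could apply the Murnaghan--Nakayama rule directly: since the only partition of $1$ is the corner $(1)$, $\chi_\zeta((n-1,1))$ equals $(-1)^{\mathrm{ht}}$ when the skew shape $\zeta/(1)$ is a single border strip and $0$ otherwise, and the same three families emerge from asking when deleting the corner $(1,1)$ leaves a connected strip containing no $2\times 2$ block of boxes.
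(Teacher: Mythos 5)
Your proof is correct, but note that the paper does not prove this lemma at all: it is imported verbatim as a citation of \cite[Lemma 4.3]{SZ}, so there is no internal argument to compare against. What you supply is a clean, self-contained derivation using only tools the paper already has on hand, namely the Branching Rule (stated after Theorem~\ref{thm:MN}) applied with $\gamma'=(n-1)$, followed by Lemma~\ref{lem:character_for_n_cycles} in $S_{n-1}$. Your reduction to the question of which corner removals of $\zeta$ leave a hook is exactly the right combinatorial core, and the case analysis is complete: the structural observation that only hooks and the double hooks $(n-m,2,1^{m-2})$ can have a hook among their corner-deletions disposes of the ``otherwise'' case; the two corners of a proper hook contribute cancelling signs $(-1)^k$ and $(-1)^{k-1}$; the single corners of $(n)$ and $(1^n)$ give $1$ and $(-1)^{n-2}$; and in the double-hook case only the removal of the box $(2,2)$ yields a hook, namely $(n-m,1^{m-1})$ with sign $(-1)^{m-1}$, with the boundary instances $m=2$ and $n-m=2$ handled correctly (in the latter, $(1,2)$ is indeed not a corner). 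The consistency check via Lemma~\ref{lem:conjugate_character} and the observation that one could equally apply Murnaghan--Nakayama directly to the single-box strip $\zeta/(1)$ are both sound. The only thing your write-up buys beyond the paper is self-containedness; the only thing it costs is the page of bookkeeping that the citation avoids.
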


For any partition $\gamma$, let $\ell(\gamma)$ denote the number of parts of $\gamma$. Let $m$ and $n$ be positive integers. Given partitions $\mu$ and $\zeta$ of $m$ and $m+n$, respectively, we say that $\mu$ is a \emph{subpartition} of $\zeta$, written $\mu \subseteq \zeta$, if $\ell(\mu) \leq \ell(\zeta)$ and $\mu_{i} \leq \zeta_{i}$ for $1 \leq i \leq \ell(\mu)$. The \emph{skew diagram} $\zeta / \mu$ is defined to be the set of boxes
in $\zeta$ but not in $\mu$. Denote by $\mathrm{ht}(\zeta / \mu)$ the number of nonempty rows of $\zeta / \mu$ minus one. A skew diagram $\zeta / \mu$  is called a \emph{border strip} if it contains no subset of the $2\times 2$ box
\begin{table}[H]
\centering
\begin{tabular}{|l|l|}
\hline
 &  \\ \hline
 &  \\ \hline
\end{tabular}
\end{table}
\noindent and the graph with vertices the boxes of $\zeta / \mu$ and edges joining two neighbouring boxes in the same row or column is connected. 
For example, if $\zeta_0=(3,3,2,1)$, $\mu_1=(2,1,1)$ and $\mu_2=(3)$, then we have the skew diagrams 
\begin{figure}[H]
\centering
\begin{minipage}[b]{0.45\linewidth}
\centering
$\zeta_0 /\mu_1 =\;\, $\begin{tabular}{lll}
\cline{3-3}
                       & \multicolumn{1}{l|}{} & \multicolumn{1}{l|}{} \\ \cline{2-3} 
\multicolumn{1}{l|}{}  & \multicolumn{1}{l|}{} & \multicolumn{1}{l|}{} \\ \cline{2-3} 
\multicolumn{1}{l|}{}  & \multicolumn{1}{l|}{} &                       \\ \cline{1-2}
\multicolumn{1}{|l|}{} &                       &                       \\ \cline{1-1}
\end{tabular}
\end{minipage}
\begin{minipage}[b]{0.45\linewidth}
\centering
$\zeta_0/\mu_2 = \;\, $\begin{tabular}{lll}                    &                       &                       \\ \hline
\multicolumn{1}{|l|}{} & \multicolumn{1}{l|}{} & \multicolumn{1}{l|}{} \\ \hline
\multicolumn{1}{|l|}{} & \multicolumn{1}{l|}{} &                       \\ \cline{1-2}
\multicolumn{1}{|l|}{} &                       &                       \\ \cline{1-1}
\end{tabular}
\end{minipage}	
\end{figure}
\noindent with $\mathrm{ht}(\zeta_0/\mu_1)=3$ and $\mathrm{ht}(\zeta_0/\mu_2)=2$. Note that these two skew diagrams are not border strips, while the following one is a border strip:
\begin{table}[H]
\centering
\begin{tabular}{llllll}
\cline{4-6}
                       &                       & \multicolumn{1}{l|}{} & \multicolumn{1}{l|}{} & \multicolumn{1}{l|}{} & \multicolumn{1}{l|}{} \\ \cline{3-6} 
                       & \multicolumn{1}{l|}{} & \multicolumn{1}{l|}{} & \multicolumn{1}{l|}{} &                       &                       \\ \cline{1-4}
\multicolumn{1}{|l|}{} & \multicolumn{1}{l|}{} & \multicolumn{1}{l|}{} &                       &                       &                       \\ \cline{1-3}
\multicolumn{1}{|l|}{} &                       &                       &                       &                       &                       \\ \cline{1-1}
\end{tabular}
\end{table} 

The main tool in this paper is as follows.

\begin{thm}[Murnaghan-Nakayama Rule]{\upshape \cite[Theorem 4.10.2]{Sagan}}\label{thm:MN}
Given positive integers $m$ and $n$, let $\rho \in S_{m+n}$ be an $m$-cycle and let $\pi$ be a permutation of the remaining $n$ elements of $[m+n]$. Then for any $\zeta\vdash m+n$,
$$
\chi_{\zeta}(\pi \rho)=\sum(-1)^{\operatorname{ht}(\zeta /\mu)} \chi_{\mu}(\pi),
$$
where the sum is over all $\mu\vdash n$ such that $\mu \subset \zeta$ and $\zeta / \mu$ is a border strip.
\end{thm}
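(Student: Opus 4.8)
The plan is to translate the statement, via the Frobenius characteristic map, into a single identity among symmetric functions, and then to establish that identity using the bialternant (Weyl determinant) form of the Schur functions together with the beta-set (abacus) combinatorics of partitions. The starting point is the Frobenius formula $p_\gamma=\sum_{\zeta\vdash N}\chi_\zeta(\gamma)\,s_\zeta$, which says precisely that $\chi_\zeta(\gamma)$ is the coefficient of the Schur function $s_\zeta$ when the power-sum symmetric function $p_\gamma$ is expanded in the Schur basis. Because $\pi$ and the $m$-cycle $\rho$ act on disjoint sets of points, the product $\pi\rho$ has cycle type $(m)\cup\nu$, where $\nu\vdash n$ is the cycle type of $\pi$; hence $p_{(m)\cup\nu}=p_m\,p_\nu$, and $\chi_\zeta(\pi\rho)$ is the coefficient of $s_\zeta$ in $p_m\,p_\nu$.

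First I would expand $p_\nu=\sum_{\mu\vdash n}\chi_\mu(\nu)\,s_\mu$ by the Frobenius formula again, obtaining
\[
p_m\,p_\nu=\sum_{\mu\vdash n}\chi_\mu(\nu)\,\bigl(p_m\,s_\mu\bigr).
\]
Everything then reduces to the single Schur-function identity
\[
p_m\,s_\mu=\sum_{\zeta}(-1)^{\operatorname{ht}(\zeta/\mu)}\,s_\zeta,
\]
the sum being over all $\zeta$ with $\mu\subseteq\zeta$ and $\zeta/\mu$ a border strip of size $m$. Granting this identity and reading off the coefficient of $s_\zeta$ on both sides of the displayed expansion of $p_m\,p_\nu$ yields exactly $\chi_\zeta(\pi\rho)=\sum_{\mu}(-1)^{\operatorname{ht}(\zeta/\mu)}\chi_\mu(\nu)$, which is the assertion, since $\chi_\mu(\nu)=\chi_\mu(\pi)$.

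To prove the key identity I would work in finitely many variables $x_1,\dots,x_N$ with $N>\ell(\zeta)$ and use $s_\mu\,a_\delta=a_{\mu+\delta}$, where $\delta=(N-1,\dots,1,0)$ and $a_\alpha=\det\bigl(x_j^{\alpha_i}\bigr)_{i,j=1}^N$. Expanding the determinant and summing over rows gives the clean multiplication rule $p_m\,a_\alpha=\sum_{i=1}^N a_{\alpha+m e_i}$, because $\sum_i x_{\sigma(i)}^m=p_m$ for every permutation $\sigma$; thus $p_m\,s_\mu\,a_\delta=\sum_{i=1}^N a_{\mu+\delta+m e_i}$. For each $i$ the exponent vector $\mu+\delta+m e_i$ either has a repeated entry, in which case the alternant vanishes, or it can be sorted into strictly decreasing form $\zeta+\delta$ by a permutation $\tau_i$, contributing $\operatorname{sgn}(\tau_i)\,a_{\zeta+\delta}$; dividing by $a_\delta$ expresses $p_m\,s_\mu$ in the Schur basis.

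The main obstacle, and the heart of the matter, is the combinatorial identification of the surviving terms. Here I would invoke the beta-set description of partitions: the strictly decreasing sequence $\mu_i+N-i$ is a configuration of beads, and replacing one bead $\mu_i+N-i$ by $\mu_i+N-i+m$ corresponds to adjoining a border strip of size $m$ to $\mu$, the partition $\zeta$ being well defined precisely when the target position is unoccupied; this matches the vanishing-versus-surviving dichotomy above. The number of beads lying strictly between the old and new positions simultaneously equals the number of adjacent transpositions needed to re-sort, hence determines $\operatorname{sgn}(\tau_i)$, and equals $\operatorname{ht}(\zeta/\mu)$, the number of rows of the border strip minus one. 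Establishing this three-way equality --- unoccupied target $\leftrightarrow$ border strip, intervening beads $\leftrightarrow$ height $\leftrightarrow$ sign --- is the crux; once it is in place the identity $p_m\,s_\mu=\sum_\zeta(-1)^{\operatorname{ht}(\zeta/\mu)}s_\zeta$ follows, and with it the Murnaghan--Nakayama rule. I would sanity-check the bookkeeping on the smallest case $\mu=\emptyset$, $m=2$, where the two border strips $(2)$ and $(1^2)$ reproduce Newton's identity $p_2=s_{(2)}-s_{(1^2)}$ with the correct signs.
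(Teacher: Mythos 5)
The paper does not prove this statement at all---it is quoted verbatim as a known result, \cite[Theorem 4.10.2]{Sagan}---so there is no in-paper proof to compare against. Your outline is a correct sketch of the standard textbook argument (reduce via the Frobenius characteristic map to the symmetric-function identity $p_m s_\mu=\sum_\zeta(-1)^{\operatorname{ht}(\zeta/\mu)}s_\zeta$, then prove that identity with bialternants and the beta-set/abacus correspondence), which is essentially the proof given in Sagan; the only part you leave unproved, and correctly flag as the crux, is the three-way equality between ``target bead position unoccupied,'' ``border strip of size $m$,'' and ``number of intervening beads $=$ height $=$ sign exponent,'' which a complete write-up would still need to verify.
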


The special case of the Murnaghan-Nakayama Rule that $\rho$ is just a $1$-cycle, that is, a fixed point, is called the \emph{Branching Rule}. To be specific, if $\zeta$ and $\gamma$ are parititions of $n+1$ with $c_1(\gamma)\ge 1$, letting $\gamma'$ be the partition of $n$ with all $c_i(\gamma')=c_i(\gamma)$ except for $c_1(\gamma')=c_1(\gamma)-1$, then
$$\chi_\zeta(\gamma)=\sum_{\zeta-\Box} \chi_{\zeta-\Box}(\gamma'),$$ where the sum is taken over all partitions of $n$ which are obtained from $\zeta$ by removing one box. 

Recall from \eqref{eq:CnI} that $C(n,I)$ is the set of all cycles in $S_n$ with lengths in $I$, where $\emptyset\ne I \subseteq \{2,3,\ldots,n\}$. Since $\mathrm{Cay}(S_n,C(n,I))$ is normal, by Proposition~\ref{prop:normal_eigenvalue} we can express its eigenvalues in terms of the irreducible characters of $S_n$. More specifically, if we denote by $\lambda_\zeta^I$ the eigenvalue of $\mathrm{Cay}(S_n,C(n,I))$ corresponding to $\zeta\vdash n$, then
\begin{eqnarray}
	\lambda_{\zeta}^I&=& \sum_{\sigma\in C(n,I)} \tilde{\chi}_\zeta(\sigma)\nonumber\\
	&=& \sum_{k\in I}|C(n,k)|\cdot \tilde{\chi}_\zeta((k,1^{n-k}))\nonumber\\
	&=&\sum_{k\in I}\binom{n}{k}(k-1)!\cdot \tilde{\chi}_\zeta((k,1^{n-k})).\label{eq:eigenvalue}
\end{eqnarray} 
Moreover, the multiplicity of $\lambda_\zeta^I$ is equal to 
\begin{equation}
\label{eq:multi}
\sum_{\substack{\mu\vdash n\\\lambda_{\mu}^I=\lambda_\zeta^I}}\chi_{\mu}(\mathbf{1})^2.
\end{equation}
In particular, by (\ref{eq:eigenvalue}), Table~\ref{tab:tab1} and Lemma~\ref{lem:conjugate_character}, for any $\emptyset \ne I \subseteq \{2,3,\ldots, n\}$, the two eigenvalues of $\mathrm{Cay}(S_n,C(n,I))$ corresponding to the sign and standard representations are
\begin{equation}
\label{eq: odd_eigenvalue_(1^n)}
\lambda_{(1^n)}^I = \sum_{k\in I} |C(n,k)|\cdot \tilde{\chi}_{(1^n)}((k,1^{n-k})) = \sum_{k\in I}\binom{n}{k}(k-1)!  \cdot (-1)^{k-1}
\end{equation}
and
\begin{equation}
\label{eq:odd_eigenvalue_(n-1,1)}
\lambda_{(n-1,1)}^I = \sum_{k\in I}  |C(n,k)|\cdot \tilde{\chi}_{(n-1,1)}((k,1^{n-k})) 
= \sum_{k\in I}   \binom{n}{k}(k-1)!\cdot \frac{n-k-1}{n-1},
\end{equation}
respectively.

\section{$\mathrm{Cay}(S_n,C(n,I))$ with $I\cap \{n-1,n\}=\emptyset$}
\label{sec:proofs}

\begin{lem}\label{lem:upper_bound}
Suppose $n\ge 8$. Let $\gamma=(k,1^{n-k})$ be the cycle type of a $k$-cycle in $S_n$ with $2\leq k\leq n-3$.  For any $\zeta\vdash n$ other than $(n),(1^n),(n-1,1)$ and $(2,1^{n-2})$, we have  
\begin{equation}
\label{eq:k1n}
\tilde{\chi}_{\zeta}(\gamma)<\frac{(n-k)(n-k-1)}{n(n-1)}.
\end{equation}
\end{lem}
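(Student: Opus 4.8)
The plan is to exploit the branching rule (the $1$-cycle case of Theorem~\ref{thm:MN}) to set up an induction on the number of fixed points $f=n-k$ of the $k$-cycle. Writing $\gamma^-=(k,1^{n-1-k})\vdash n-1$ for the cycle type obtained by deleting one fixed point, the branching rule gives $\chi_\zeta(\gamma)=\sum_{\zeta-\Box}\chi_{\zeta-\Box}(\gamma^-)$, the sum being over all partitions of $n-1$ obtained by removing a corner box of $\zeta$. Since the restriction of $\rho_\zeta$ to $S_{n-1}$ decomposes as $\bigoplus_{\zeta-\Box}\rho_{\zeta-\Box}$, we have $\dim\rho_\zeta=\sum_{\zeta-\Box}\dim\rho_{\zeta-\Box}$, and dividing by $\dim\rho_\zeta$ turns the identity into a convex combination
\begin{equation*}
\tilde{\chi}_\zeta(\gamma)=\sum_{\zeta-\Box}\frac{\dim\rho_{\zeta-\Box}}{\dim\rho_\zeta}\,\tilde{\chi}_{\zeta-\Box}(\gamma^-),
\end{equation*}
with strictly positive weights summing to $1$. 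The key numerical fact is that the target bound $B_n:=\frac{(n-k)(n-k-1)}{n(n-1)}=\frac{f(f-1)}{n(n-1)}$ satisfies $B_{n-1}<B_n$; indeed $B_{n-1}<B_n$ is equivalent to $\frac{f-2}{n-2}<\frac{f}{n}$, i.e.\ to $f<n$, which holds as $f\le n-2$. Hence a convex combination of quantities all strictly below $B_{n-1}$ is strictly below $B_n$.

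First I would fix $k$ and induct on $n$ (equivalently on $f$), the conclusion for $n-1$ being available once $n-1\ge 8$ and $k\le (n-1)-3$. For a partition $\zeta\vdash n$ that is none of the four excluded partitions, I claim that every corner removal $\zeta-\Box$ is again a non-excluded partition of $n-1$, unless $\zeta$ lies in a short explicit list. Indeed, tracing backwards, an excluded partition of $n-1$ arises as some $\zeta-\Box$ only for $\zeta\in\{(n),(n-1,1),(1^n),(2,1^{n-2})\}$ (all excluded) together with $\zeta\in\{(n-2,2),(n-2,1^2)\}$ and their conjugates $\{(2^2,1^{n-4}),(3,1^{n-3})\}$. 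For all other non-excluded $\zeta$ the inductive hypothesis bounds each $\tilde{\chi}_{\zeta-\Box}(\gamma^-)$ by $B_{n-1}<B_n$, and the convex combination then yields the desired strict inequality. The four listed partitions I would treat directly from Table~\ref{tab:tab1}: a short computation gives $\tilde{\chi}_{(n-2,2)}(\gamma)=\frac{(n-k)(n-k-3)}{n(n-3)}$ and $\tilde{\chi}_{(n-2,1^2)}(\gamma)=\frac{(n-k-1)(n-k-2)}{(n-1)(n-2)}$, both of which are checked to be less than $B_n$ for $2\le k\le n-3$; the two conjugate partitions are then handled through $\tilde{\chi}_{\zeta'}(\gamma)=\mathrm{sgn}(\gamma)\tilde{\chi}_\zeta(\gamma)$ (Lemma~\ref{lem:conjugate_character}), noting that $\tilde{\chi}_{(n-2,2)}(\gamma),\tilde{\chi}_{(n-2,1^2)}(\gamma)\ge 0$ so that their sign-twists are $\le 0<B_n$.

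It remains to settle the base cases, namely $n=8$ (a finite verification over the few valid $k$ and the partitions of $8$, carried out via Table~\ref{tab:tab1} and a handful of extra character values), and, more seriously, the family $k=n-3$, that is $\gamma=(n-3,1^3)$ with $f=3$, which the induction cannot reduce further. For this family I would apply the Murnaghan--Nakayama rule (Theorem~\ref{thm:MN}) to strip off the large $(n-3)$-cycle: the contributing terms correspond to removable border strips of size $n-3$ in $\zeta$, each leaving a partition $\mu\vdash 3$ with $\dim\rho_\mu\le 2$, whence $|\chi_\zeta((n-3,1^3))|\le 2r$, where $r$ is the number of cells of $\zeta$ with hook length exactly $n-3$. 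Since $r$ is small and, by the Hook-Length Formula (Theorem~\ref{thm:Hook}), $\dim\rho_\zeta$ is large for every non-excluded $\zeta$ away from the near-extreme shapes, the ratio $\tilde{\chi}_\zeta((n-3,1^3))=\chi_\zeta/\dim\rho_\zeta$ falls below $B_n=\frac{6}{n(n-1)}$; the finitely many near-extreme shapes (those with $\zeta_1$ or $\zeta_1'$ close to $n$, covered by Table~\ref{tab:tab1}) are verified by hand.

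The main obstacle is precisely this $f=3$ base case: unlike the inductive step, it cannot be reduced, and it forces a careful enumeration of which shapes $\zeta$ admit a size-$(n-3)$ border strip, together with a usable lower bound on $\dim\rho_\zeta$. Controlling $r$ and the dimension simultaneously, and isolating the near-extreme shapes where the crude bound $2r/\dim\rho_\zeta$ is too weak, is the technical heart of the argument; everything else follows cleanly from the convexity/monotonicity mechanism above.
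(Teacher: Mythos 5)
Your proposal follows essentially the same route as the paper: induction on $n$ via the branching rule (your convex-combination identity is exactly the paper's estimate $\tilde{\chi}_{\zeta}(\gamma)\le\max_{\zeta-\Box}\tilde{\chi}_{\zeta-\Box}((k,1^{n-1-k}))$ combined with the monotonicity $\frac{(n-k-1)(n-k-2)}{(n-1)(n-2)}<\frac{(n-k)(n-k-1)}{n(n-1)}$), with the partitions $(n-2,2)$, $(n-2,1^2)$ and their conjugates peeled off by direct computation from Table~\ref{tab:tab1}, and the irreducible base case $k=n-3$ settled by enumerating the shapes admitting an $(n-3)$-border strip via Murnaghan--Nakayama --- precisely the content of the paper's Table~\ref{tab:tab7}. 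Two small points to tidy: for odd $k$ one has $\mathrm{sgn}(\gamma)=(-1)^{k-1}=+1$, so the conjugate partitions are controlled not because their values are $\le 0$ but because they coincide with the already-checked values of $(n-2,2)$ and $(n-2,1^2)$; and the $k=n-3$ enumeration, which you rightly identify as the technical heart, is where the real work lies and must be carried out explicitly (the paper lists nine families of shapes, each with character of absolute value at most $2$, and compares against the hook-length dimensions) rather than left as a sketch.
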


\begin{proof}
One can easily verify that the result holds for $n=8$. Suppose this result holds for some $n-1\ge 8$. Now we prove \eqref{eq:k1n} for $n\ge 9$ and $2\leq k\leq n-3$. Consider $k=n-3$ first. We list all the partitions of $n$ in Table~\ref{tab:tab7} which have border strips with $n-3$ boxes. The dimensions and the characters in Table~\ref{tab:tab7} are calculated with the help of the Murnaghan-Nakayama Rule and Hook-Length Formula. Also by the Murnaghan-Nakayama Rule, we know that the partitions of $n$ that are not on this list must achieve zero for the normalized character on any $(n-3)$-cycle of $S_n$. Through simple calculations with the help of Table~\ref{tab:tab7}, one can verify \eqref{eq:k1n} for $k=n-3$. 
 
\begin{table}[h]
\centering
\begin{tabular}{c c c}
\toprule
$\zeta\vdash n$ & $\dim \rho_\zeta=\chi_\zeta(\mathbf{1})$ & $|\chi_\zeta((n-3,1^3))|$\\\toprule
$(n)$ or $(1^n)$ & $1$ & $1$\\ \midrule
$(n-1,1)$ or $(2,1^{n-2})$ & $n-1$ & $2$\\\midrule
$(n-2,1^2)$ or $(3,1^{n-3})$ & $\frac{(n-1)(n-2)}{2}$ & $1$ \\ \midrule
$(n-3,2,1)$ or $(3,2,1^{n-5})$ & $\frac{n(n-2)(n-4)}{3} $ & $1$\\ \midrule
$(n-3,3)$ or $(2^3,1^{n-6})$ & $\frac{n(n-1)(n-5)}{6}$ & $2$\\ \midrule
$(n-4,2^2)$ or $(3^2,1^{n-6})$ & $\frac{n(n-1)(n-4)(n-5)}{12}$ &  $1$ \\ \midrule
$(n-m,3,2, 1^{m-5})$  & $\frac{n!}{3(n-3)(n-m+1)(n-m-1)(m-1)(m-3)(m-5)!(n-m-3)!}$ & $2$\\\midrule
$(n-m,4,1^{m-4})$&  $\frac{n!}{6(n-3)(n-m)(n-m-1)(n-m-2)m(m-4)!(n-m-4)!}$ & $1$\\ \midrule
$(n-m,2^3,1^{m-6})$& $\frac{n!}{6(n-3)(n-m+2)(m-2)(m-3)(m-4)(m-6)!(n-m-2)!}$ & $1$\\
\bottomrule
\end{tabular}
\vspace{0.2cm}
\caption{All nonzero characters of irreducible representations of $S_n$ on $(n-3,1^3)$}
\label{tab:tab7}
\end{table}

Now suppose $2\leq k\leq n-4$. Using Table~\ref{tab:tab1} and Lemma~\ref{lem:conjugate_character}, we can show that \eqref{eq:k1n} holds for $\zeta=(n-2,2),(2^2,1^{n-4}),(n-2,1^2),(3,1^{n-3})$. In the following let $\zeta$ be any partition of $n$ with at least three boxes outside the first row and at least three boxes outside the first column. Thus, by Branching Rule, we obtain
\begin{align}
	\tilde{\chi}_{\zeta}(\gamma)&= \frac{\sum_{\zeta-\Box}\chi_{\zeta-\Box}(k,1^{n-1-k})}{\sum_{\zeta-\Box}\chi_{\zeta-\Box}(\mathbf{1})} \nonumber \\
	&\leq \max_{\zeta-\Box} \tilde{\chi}_{\zeta-\Box}((k,1^{n-1-k})) \nonumber \\
	&< \frac{(n-k-1)(n-k-2)}{(n-1)(n-2)} \nonumber \\
	&< \frac{(n-k)(n-k-1)}{n(n-1)},   \nonumber
\end{align}
where the penultimate inequality is deduced from our induction hypothesis. We can use this hypothesis because each $\zeta-\Box$ above is a partition of $n-1$ with at least two boxes outside the first row and at least two boxes outside the first column. 
\qedhere
\end{proof}

The next lemma shows that on any cycle of $S_n$ with length at most $n-2$ the normalized character of the standard representation is greater than that of those Specht modules not corresponding to $(n),(1^n),(n-1,1)$ or $(2,1^{n-2})$.  

\begin{lem}\label{lem:cycle_n-2}
	Suppose $n\ge 7$. Let $\gamma=(k,1^{n-k})$ be the cycle type of a $k$-cycle in $S_n$ with $2\leq k\leq n-2$.  For any $\zeta\vdash n$ other than $(n),(1^n),(n-1,1)$ and $(2,1^{n-2})$, we have 
            \begin{equation*} 
             \tilde{\chi}_{\zeta}(\gamma)<\tilde{\chi}_{(n-1,1)}(\gamma).
             \end{equation*}
\end{lem}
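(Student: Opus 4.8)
The plan is first to record the value of the right-hand side. Since a $k$-cycle in $S_n$ has exactly $n-k$ fixed points, the row of Table~\ref{tab:tab1} for $(n-1,1)$ gives $\chi_{(n-1,1)}(\gamma)=(n-k)-1$, whence
\[
\tilde{\chi}_{(n-1,1)}(\gamma)=\frac{n-k-1}{n-1}.
\]
For $n\ge 8$ and $2\le k\le n-3$ the statement is then immediate from Lemma~\ref{lem:upper_bound}: that lemma bounds $\tilde{\chi}_\zeta(\gamma)$ by $\frac{(n-k)(n-k-1)}{n(n-1)}$, and since $k\ge 2$ gives $\frac{n-k}{n}<1$ while $k\le n-2$ gives $n-k-1>0$, we obtain
\[
\frac{(n-k)(n-k-1)}{n(n-1)}=\frac{n-k}{n}\cdot\frac{n-k-1}{n-1}<\frac{n-k-1}{n-1}=\tilde{\chi}_{(n-1,1)}(\gamma).
\]
This disposes of every case except $k=n-2$ (for all $n\ge 7$) and the base value $n=7$ with $2\le k\le 4$, which I would treat separately.

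For $k=n-2$ the target becomes $\tilde{\chi}_{(n-1,1)}(\gamma)=\frac{1}{n-1}$, and I would attack it through the Murnaghan--Nakayama rule (Theorem~\ref{thm:MN}). Writing an $(n-2)$-cycle together with its two fixed points and using that $\chi_\mu(\mathbf{1})=1$ for both $\mu\vdash 2$, the rule expresses $\chi_\zeta((n-2,1^2))$ as a signed sum $\sum_a(-1)^{\mathrm{ht}}$ over the cells $a$ of $\zeta$ of hook length $n-2$ (each such cell giving a removable border strip of size $n-2$ leaving a partition of $2$). The crucial claim is that \emph{at most two} cells of $\zeta$ can have hook length $n-2$, so that $|\chi_\zeta((n-2,1^2))|\le 2$. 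To see this, note that $\zeta$ fits in its $\zeta_1\times\ell(\zeta)$ bounding box, so $\zeta_1\ell(\zeta)\ge n$ and hence $h_{1,1}=\zeta_1+\ell(\zeta)-1\ge 2\sqrt{n}-1\ge 3$ for $n\ge 7$. Consequently any cell $(i,j)$ with $i,j\ge 2$ has its whole hook contained in the rows and columns indexed by values $\ge 2$, giving $h_{i,j}\le n-h_{1,1}<n-2$; thus a cell of hook length $n-2$ must lie in the first row or the first column. Since hook lengths strictly decrease along the first row and along the first column, at most one such cell occurs in each, and these candidates coincide when the corner $(1,1)$ is involved; in all cases at most two cells of $\zeta$ have hook length $n-2$, proving the claim.

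It then remains to combine $|\chi_\zeta((n-2,1^2))|\le 2$ with a lower bound on $\dim\rho_\zeta$. For $n\ge 7$ the only irreducible representations of $S_n$ of dimension at most $2(n-1)$ are precisely the four excluded ones $(n),(1^n),(n-1,1),(2,1^{n-2})$, of dimensions $1,1,n-1,n-1$: indeed the next smallest dimensions, $\frac{n(n-3)}{2}$ for $(n-2,2)$ and $\frac{(n-1)(n-2)}{2}$ for $(n-2,1^2)$ (together with their conjugates), already exceed $2(n-1)$ when $n\ge 7$, and every other partition has a strictly larger dimension by the Hook-Length Formula (Theorem~\ref{thm:Hook}). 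Hence for any non-excluded $\zeta$,
\[
\tilde{\chi}_\zeta((n-2,1^2))\le\frac{2}{\dim\rho_\zeta}<\frac{2}{2(n-1)}=\frac{1}{n-1}=\tilde{\chi}_{(n-1,1)}((n-2,1^2)),
\]
which is the desired inequality.

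Finally, for $n=7$ and $2\le k\le 4$, which fall outside the scope of Lemma~\ref{lem:upper_bound}, I would verify the inequality directly: this is a finite check over the fifteen partitions of $7$, using Table~\ref{tab:tab1} and Lemma~\ref{lem:conjugate_character} for the listed representations and the Murnaghan--Nakayama rule for the remaining few. I expect the main obstacle to be the $k=n-2$ case, and within it the dimension input: the estimate $\tilde{\chi}_\zeta\le 2/\dim\rho_\zeta$ is only decisive once one knows that no irreducible representation other than the four exceptions has dimension as small as $2(n-1)$, so the argument hinges on the classification of the smallest character degrees of $S_n$. If one prefers to avoid this input, the alternative is to enumerate, via the Murnaghan--Nakayama rule, all near-hook shapes carrying a border strip of size $n-2$---an analogue of Table~\ref{tab:tab7}---and to check the bound on each; this is more computational but self-contained.
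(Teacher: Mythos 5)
Your proposal is correct in outline and, for the range $2\le k\le n-3$, is identical to the paper's proof: invoke Lemma~\ref{lem:upper_bound} for $n\ge 8$ together with the elementary inequality $\frac{(n-k)(n-k-1)}{n(n-1)}<\frac{n-k-1}{n-1}$, and check $n=7$ by hand. Where you genuinely diverge is the case $k=n-2$. The paper handles it by enumeration: Table~\ref{tab:tab4} lists every partition admitting a border strip of $n-2$ boxes (the near-hooks $(n-m,3,1^{m-3})$, $(n-m,2^2,1^{m-4})$ and a few small shapes), and the normalized characters are compared one family at a time. You instead prove the clean structural bound $|\chi_\zeta((n-2,1^2))|\le 2$ — your argument that a cell of hook length $n-2$ must lie in the first row or first column because $h_{1,1}\ge 2\sqrt{n}-1\ge 3$ forces $h_{i,j}\le n-h_{1,1}<n-2$ for $i,j\ge 2$ is correct, as is the bijection between removable border strips and hooks of that length — and then divide by a dimension lower bound. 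This buys a shorter, more conceptual argument that avoids computing any hook-length products, but it imports the classification of the minimal character degrees of $S_n$: the assertion that every $\zeta$ outside the four exceptions has $\dim\rho_\zeta>2(n-1)$ is true and classical, but it does not follow ``by the Hook-Length Formula'' without an actual induction or case analysis (the hooks $(n-m,1^m)$ are easy via $\binom{n-1}{m}\ge\binom{n-1}{2}$, the non-hooks less so), and neither the paper nor your proposal supplies that argument. You flag this dependence yourself and correctly identify the fallback — enumerating the shapes with an $(n-2)$-border strip — which is precisely what the paper's Table~\ref{tab:tab4} does. So the proof is sound provided you either cite the minimal-degree classification explicitly or carry out that enumeration; as written, that single step is asserted rather than proven.
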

\begin{proof}
First, suppose $\gamma=(n-2,1^2)$. Table~\ref{tab:tab4} exhibits all the partitions of $n$ which have border strips with $n-2$ boxes and thus achieve nonzero characters on any $(n-2)$-cycle of $S_n$. Since $n\ge 7$, from Table~\ref{tab:tab4} one can see that $\tilde{\chi}_{(n-1,1)}(\gamma)=\frac{1}{n-1}$ and $\tilde{\chi}_\zeta(\gamma)< \frac{1}{n-1}$ whenever $\zeta\ne (n), (1^{n}),(n-1,1),(2,1^{n-2})$. 
\begin{table}[h]
\centering
\begin{tabular}{c c c c}
\toprule
$\zeta\vdash n$ & $\dim \rho_\zeta=\chi_\zeta(\mathbf{1})$ & $\chi_\zeta((n-2,1,1))$\\\toprule 
$(n)$ & $1$ & $1$ \\ \midrule 
$(1^n)$ & $1$ & $(-1)^{n-3}$\\ \midrule 
$(n-1,1)$ & $n-1$ & $1$ \\\midrule 
$(2,1^{n-2})$ & $n-1$ & $(-1)^{n-3}$ \\\midrule
$(n-2,2)$ & $\frac{n(n-3)}{2}$ & $-1$ \\ \midrule 
$(2^2,1^{n-4})$ & $\frac{n(n-3)}{2}$ & $(-1)^{n-2}$ \\ \midrule 
$(n-m,3,1^{m-3})$  & $\frac{n!}{2m(n-2)(n-m)(n-m-1)(m-3)!(n-m-3)!}$ & $(-1)^{m-2}$\\\midrule
$(n-m,2^2,1^{m-4})$& $\frac{n!}{2(m-1)(m-2)(n-2)(n-m+1)(m-4)!(n-m-2)!}$ & $(-1)^{m-2}$\\ 
\bottomrule
\end{tabular}
\vspace{0.2cm}
\caption{All nonzero characters of irreducible representations of $S_n$ on $(n-2,1^2)$}
\label{tab:tab4}
\end{table} 

Now suppose $\gamma=(k,1^{n-k})$ with $2\leq k\leq n-3$. One can verify that if $n=7$ then $\tilde{\chi}_{\zeta}(\gamma)<\tilde{\chi}_{(6,1)}(\gamma)$ holds for any $\zeta\ne (7), (1^7),(6,1), (2,1^5)$. If $n\ge 8$, then by Lemma~\ref{lem:upper_bound} we have 
\begin{equation*}
	\tilde{\chi}_{\zeta}(\gamma)< \frac{(n-k)(n-k-1)}{n(n-1)}<\frac{n-k-1}{n-1}=\tilde{\chi}_{(n-1,1)}(\gamma).
\end{equation*}
This completes the proof.
\end{proof}

The next two lemmas compare the eigenvalues $\lambda_{(1^n)}^I$ and $\lambda_{(n-1,1)}^I$ of $\mathrm{Cay}(S_n,C(n,I))$ for any $I\subseteq \{2,3,\ldots,n-1\}$.

\begin{lem}
\label{lem:diff_(1^n)_(n-1,1)_odd}
Suppose $n\ge 7$. If $I=\{2,3\}$, then $\lambda_{(1^n)}^I=\lambda_{(n-1,1)}^I$; if $\{2,3\}\ne I \subseteq \{2,3,\ldots,n-1\}$ and the largest number in $I$ is odd, then $\lambda_{(1^n)}^I >\lambda_{(n-1,1)}^I$. 
\end{lem}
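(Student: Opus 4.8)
The plan is to work directly with the explicit eigenvalue formulas \eqref{eq: odd_eigenvalue_(1^n)} and \eqref{eq:odd_eigenvalue_(n-1,1)} and analyse the sign of
\[
\Delta(I) := \lambda_{(1^n)}^I - \lambda_{(n-1,1)}^I = \sum_{k\in I} g(k), \qquad g(k) := \binom{n}{k}(k-1)!\left[(-1)^{k-1} - \frac{n-k-1}{n-1}\right].
\]
The first step is to obtain a clean closed form for each summand. Since $\binom{n}{k}(k-1)! = \frac{n!}{k(n-k)!}$, a short computation shows that for odd $k$ one has $g(k) = \frac{n\,(n-2)!}{(n-k)!} > 0$, while for even $k$ one has $g(k) = -\frac{n\,(n-2)!\,(2n-k-2)}{k\,(n-k)!} < 0$; in particular $g(2) = -n(n-2)$ and $g(3) = n(n-2)$, so $g(2)+g(3) = 0$. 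This immediately settles the case $I = \{2,3\}$, and it also disposes of the degenerate subcase $I_{\max}=3$ (which forces $I=\{3\}$), where $\Delta(I) = g(3) > 0$.

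For the main case I would set $m := I_{\max} \ge 5$ (odd) and reduce to the worst configuration. Because every odd $k$ contributes a positive term and every even $k$ a negative term, deleting all odd $k<m$ from $I$ and adjoining every even $k\le m-1$ can only decrease the sum, so
\[
\Delta(I) \ge g(m) - \sum_{\substack{2\le k\le m-1\\ k\ \mathrm{even}}} |g(k)|.
\]
It therefore suffices to prove the dominance inequality $g(m) > \sum_{k\ \mathrm{even}} |g(k)|$: the single top (odd) term must outweigh all the even terms below it.

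To establish dominance I would exploit the rapid decay of $|g(k)|$ through even $k$. From the closed form, the ratio of consecutive even terms factors as
\[
\frac{|g(k-2)|}{|g(k)|} = \frac{2n-k}{2n-k-2}\cdot\frac{k}{k-2}\cdot\frac{1}{(n-k+1)(n-k+2)},
\]
and each factor is easily bounded for $4\le k\le m-1\le n-2$ (the first by $1+\tfrac2n$, the second by $2$, the third by $\tfrac1{12}$), giving a uniform bound $r<\tfrac14$ when $n\ge 7$. Hence $\sum_{k\ \mathrm{even}}|g(k)| \le |g(m-1)|\,(1-r)^{-1} < \tfrac43\,|g(m-1)|$. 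On the other hand the exact ratio of the top two terms is $g(m)/|g(m-1)| = (m-1)(n-m+1)/(2n-m-1) = 1 + (m-3)(n-m)/(2n-m-1)$, and a one-variable check shows $(m-3)(n-m)/(2n-m-1) > \tfrac13$ for all odd $m\in[5,n-1]$ with $n\ge 7$: the quantity $3(m-3)(n-m)-(2n-m-1)$ is a downward parabola in $m$, so its minimum on $[5,n-1]$ is attained at an endpoint, and there it equals $4(n-6)$ and $2(n-6)$ respectively, both positive. Combining, $g(m) > \tfrac43|g(m-1)| > \sum_{k\ \mathrm{even}}|g(k)|$, whence $\Delta(I) > 0$.

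The main obstacle is precisely this dominance estimate. The top negative term $|g(m-1)|$ is genuinely comparable to $g(m)$ — their ratio is roughly $2$ when $m$ is close to $n-1$ and can dip below $2$ — so the crude pairing bound $|g(k)|\le g(k+1)$ coming from $g(k)+g(k+1)\ge 0$ is far too lossy and one cannot cancel terms pairwise. The delicate point is to combine the \emph{exact} top ratio with the \emph{geometric} tail bound so that the constant $\tfrac43$ matches for every admissible $m$ and every $n\ge 7$; this is where the endpoint analysis of $(m-3)(n-m)/(2n-m-1)$ and a couple of explicit small-$n$ verifications are needed.
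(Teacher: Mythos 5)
Your proof is correct, and up to the final estimate it follows the same route as the paper: both arguments start from \eqref{eq: odd_eigenvalue_(1^n)} and \eqref{eq:odd_eigenvalue_(n-1,1)}, note that odd $k$ contribute positively and even $k$ negatively, and reduce to the worst configuration in which only the top odd term survives against all even terms below it (your $g(2)+g(3)=0$ computation for the $I=\{2,3\}$ case is also exactly what the paper does). Where you genuinely diverge is in certifying the dominance $g(m)>\sum_{k\ \text{even}}|g(k)|$. The paper telescopes the positive quantity $\frac{1}{(n-k_0)!}$ into a sum of differences $\frac{1}{(n-k-1)!}-\frac{1}{(n-k+1)!}$ over even $k$ and checks term by term that each difference absorbs the corresponding negative summand, with a separate hand verification for $k\in\{2,4\}$. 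You instead bound the even tail by a geometric series with ratio $r\le\frac{3}{14}<\frac14$ (so the tail is under $\frac{14}{11}\,|g(m-1)|$) and show the exact top ratio $g(m)/|g(m-1)|=1+(m-3)(n-m)/(2n-m-1)$ exceeds $\frac43$ by a concavity/endpoint argument; I verified the closed forms for $g(k)$, the factorization of the consecutive-even ratio and its three factor bounds, and the endpoint values $4(n-6)$ and $2(n-6)$, and all are correct, so no additional small-$n$ checks are actually needed since your bounds are uniform in $n\ge 7$. Your version isolates more transparently where the margin comes from (the top two terms are comparable and everything else is negligible), whereas the paper's telescoping has the practical advantage that the same displayed inequality is reused verbatim in Lemmas \ref{cor: gap_between_(1^n)_and_(n-1,1)} and \ref{cor:lower_bound_1^n_n-1,1_odd} to extract a quantitative lower bound on the gap rather than mere positivity.
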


\begin{proof}
If $I=\{2,3\}$, then a straightforward calculation using \eqref{eq: odd_eigenvalue_(1^n)} and \eqref{eq:odd_eigenvalue_(n-1,1)} yields $\lambda_{(1^n)}^I = \lambda_{(n-1,1)}^I$.

Now suppose $I\ne \{2,3\}$ and the largest number in $I$, say, $k_0$, is odd. If $k_0 = 3$, then $I = \{3\}$ and $\lambda_{(1^n)}^{\{3\}}>\lambda_{(n-1,1)}^{\{3\}}$ by \eqref{eq: odd_eigenvalue_(1^n)} and \eqref{eq:odd_eigenvalue_(n-1,1)}. It remains to consider the case where $5\leq k_0\leq n-1$. In this case, by \eqref{eq: odd_eigenvalue_(1^n)} and \eqref{eq:odd_eigenvalue_(n-1,1)}, we have
\begin{eqnarray}
& & \lambda_{(1^n)}^I-\lambda_{(n-1,1)}^I \nonumber  \\ 
&=&\binom{n}{k_0}(k_0-1)!\frac{k_0}{n-1}+\sum_{k\in I\setminus\{k_0\}}\binom{n}{k}(k-1)!\left((-1)^{k-1}-\frac{n-k-1}{n-1}\right)\nonumber\\
&\ge& \binom{n}{k_0}(k_0-1)!\frac{k_0}{n-1}-\sum\limits_{\substack{2\leq k\leq k_0-1\\ k\text{~is~even}}}\binom{n}{k}(k-1)!\frac{2n-k-2}{n-1}\nonumber\\
&=& n(n-2)!\left(\frac{1}{(n-k_0)!}-\sum\limits_{\substack{2\leq k\leq k_0-1\\ k\text{~is~even}}} \frac{2n-k-2}{k(n-k)!}\right) \nonumber\\
&=& n(n-2)!\left(\frac{1}{(n-5)!}-\sum\limits_{\substack{2\leq k\leq 5\\ k\text{~is~even}}} \frac{2n-k-2}{k(n-k)!}\right) + \nonumber\\
	 & & n(n-2)!\left(\sum\limits_{\substack{6\leq k\leq k_0-1\\ k\text{~is~even}}}\left(\frac{1}{(n-k-1)!}-\frac{1}{(n-k+1)!}-\frac{2n-k-2}{k(n-k)!}\right)\right). \label{ineq:diff_1^n_n-1,1_odd}
\end{eqnarray}
Since $n\ge 7$, we see that the first part of the lower bound \eqref{ineq:diff_1^n_n-1,1_odd} is positive. Note that when $k_0=5$ the second part of this lower bound valishes. Note also that, for $6\leq k\leq n-2$, we have
  \begin{align}
  	& \frac{1}{(n-k-1)!}-\frac{1}{(n-k+1)!}- \frac{2n-k-2}{k(n-k)!}\nonumber\\
    &>\frac{1}{(n-k-1)!}- \frac{k-2}{k(n-k)!} -\frac{2n-k-2}{k(n-k)!}\nonumber \\
    &= \frac{1}{(n-k-1)!}- \frac{2n-4}{k(n-k)!} \nonumber \\
    &\ge 0. \nonumber
  \end{align}
Therefore, the second part of the lower bound \eqref{ineq:diff_1^n_n-1,1_odd} is also positive as required to complete the proof.
\end{proof}

\begin{lem}\label{lem:diff_(1^n)_(n-1,1)_even}
Suppose $n\ge 7$. If $\emptyset \ne I\subseteq\{2,3,\ldots,n-1\}$ and the largest number in $I$ is even, then $\lambda_{(n-1,1)}^I>\lambda_{(1^n)}^I$.
\end{lem}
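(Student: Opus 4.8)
The plan is to work directly with the difference of the two eigenvalues given by \eqref{eq: odd_eigenvalue_(1^n)} and \eqref{eq:odd_eigenvalue_(n-1,1)}, namely
\begin{equation*}
\lambda_{(n-1,1)}^I-\lambda_{(1^n)}^I=\sum_{k\in I}\binom{n}{k}(k-1)!\left(\frac{n-k-1}{n-1}-(-1)^{k-1}\right),
\end{equation*}
and to show that it is positive. First I would record the sign of each summand: for even $k$ the bracket equals $\frac{2n-k-2}{n-1}>0$, whereas for odd $k$ it equals $-\frac{k}{n-1}<0$. Thus even parts of $I$ contribute positively and odd parts negatively; crucially, since the largest element $k_0$ of $I$ is \emph{even}, it supplies a positive term, and the whole argument reduces to showing that this single term outweighs the total negative contribution of the odd elements.

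Discarding the nonnegative contributions of the even $k<k_0$ and enlarging the negative sum to run over all odd $k$ in $[3,k_0-1]$ (legitimate since every odd $k\in I$ lies in this range) yields the lower bound
\begin{equation*}
\lambda_{(n-1,1)}^I-\lambda_{(1^n)}^I\ge \binom{n}{k_0}(k_0-1)!\,\frac{2n-k_0-2}{n-1}-\sum_{\substack{3\le k\le k_0-1\\ k\text{ odd}}}\binom{n}{k}(k-1)!\,\frac{k}{n-1}.
\end{equation*}
Using $\binom{n}{k}(k-1)!=\frac{n!}{k(n-k)!}$ and $\frac{n!}{n-1}=n(n-2)!$, the right-hand side equals $n(n-2)!$ times
\begin{equation*}
\frac{2n-k_0-2}{k_0(n-k_0)!}-\sum_{\substack{3\le k\le k_0-1\\ k\text{ odd}}}\frac{1}{(n-k)!},
\end{equation*}
so it suffices to prove that this bracket is positive, equivalently (after multiplying by $(n-k_0)!$) that $\frac{2n-k_0-2}{k_0}>\sum_{3\le k\le k_0-1,\ k\text{ odd}}\frac{(n-k_0)!}{(n-k)!}$. (The sum is empty when $k_0=2$, where the claim is immediate.)

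The main point, and the only genuinely delicate step, is to bound the right-hand sum tightly: bounding every term by its maximum and counting terms is too lossy and in fact already fails near $k_0=n-1$ for $n\ge7$. Instead I would exploit the rapid growth of $1/(n-k)!$ in $k$. Writing $t=n-k_0+1\ge2$, the top term (at $k=k_0-1$) equals $1/t$, and passing from any odd $k$ to $k-2$ multiplies the summand by $\frac{1}{(n-k+1)(n-k+2)}\le\frac{1}{(t+1)(t+2)}\le\frac1{12}$; summing the resulting geometric majorant gives $\sum\frac{(n-k_0)!}{(n-k)!}\le\frac{12}{11t}=\frac{12}{11(n-k_0+1)}$. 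It then remains to verify the elementary inequality $11(2n-k_0-2)(n-k_0+1)>12k_0$ for all $2\le k_0\le n-1$ with $n\ge7$, which follows at once from $2n-k_0-2\ge n-1$, $n-k_0+1\ge2$ and $k_0\le n-1$, since these give a left-hand side of at least $22(n-1)>12(n-1)\ge 12k_0$. Everything apart from the geometric-series majorization is routine bookkeeping with the character formulas and this one-line polynomial estimate.
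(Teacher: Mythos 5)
Your proof is correct, and its first half coincides exactly with the paper's: both of you isolate the positive contribution $\binom{n}{k_0}(k_0-1)!\frac{2n-k_0-2}{n-1}$ of the largest (even) element, discard the remaining even terms, enlarge the negative part to the sum over all odd $k\le k_0-1$, and normalize by $n(n-2)!$ so that the lemma reduces to $\frac{2n-k_0-2}{k_0(n-k_0)!}>\sum_{3\le k\le k_0-1,\ k\ \mathrm{odd}}\frac{1}{(n-k)!}$. Where you genuinely diverge is in establishing this inequality. The paper telescopes: it writes the left-hand side as a $k_0=4$ base term plus a sum over odd $5\le k\le k_0-1$ of differences $\frac{2n-k-3}{(k+1)(n-k-1)!}-\frac{2n-k-1}{(k-1)(n-k+1)!}$, and verifies that each resulting bracket, after absorbing $-\frac{1}{(n-k)!}$, is nonnegative. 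You instead majorize the factorial sum by a geometric series with ratio at most $\frac{1}{(t+1)(t+2)}\le\frac{1}{12}$, where $t=n-k_0+1\ge 2$, obtaining the closed-form bound $\frac{12}{11(n-k_0+1)}$ and finishing with a one-line polynomial estimate; I checked the tightest case $n=7$, $k_0=6$, where your majorant gives $\frac{6}{11}$ against an actual sum of $\frac{13}{24}$ and a left-hand side of $1$, so the argument closes with room to spare. Your route is shorter and arguably cleaner for this lemma; the one thing the paper's form buys is that its intermediate inequality \eqref{eq:9} is reused verbatim in Lemma~\ref{cor:n_even_1^n_n-1,1} to extract a quantitative spectral gap, which your geometric bound would also yield, just in a different (and in fact more explicit) shape.
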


\begin{proof}
Denote by $k_0$ the largest number in $I$. By our assumption, $k_0$ is even. If $k_0 = 2$, then $I = \{2\}$ and $\lambda_{(n-1,1)}^{\{2\}}>\lambda_{(1^n)}^{\{2\}}$ by \eqref{eq: odd_eigenvalue_(1^n)} and \eqref{eq:odd_eigenvalue_(n-1,1)}. 
	
Now suppose $4\leq k_0\leq n-1$. By (\ref{eq: odd_eigenvalue_(1^n)}) and (\ref{eq:odd_eigenvalue_(n-1,1)}), we have
     \begin{eqnarray}
& &\lambda_{(n-1,1)}^I-\lambda_{(1^n)}^I \nonumber \\
&=& \binom{n}{k_0}(k_0-1)!\frac{2n-k_0-2}{n-1}+\sum\limits_{k\in I\setminus\{k_0\}} \binom{n}{k}(k-1)!\left(\frac{n-k-1}{n-1}+(-1)^k\right)\nonumber\\
 	&\ge& \binom{n}{k_0}(k_0-1)!\frac{2n-k_0-2}{n-1}-\sum\limits_{\substack{2\leq k\leq k_0-1\\k~\text{is~odd}}}\binom{n}{k}(k-1)!\frac{k}{n-1} \nonumber\\
 	&=& n(n-2)!\left(\frac{2n-k_0-2}{k_0(n-k_0)!}-\sum\limits_{\substack{2\leq k\leq k_0-1\\k~\text{is~odd}}}\frac{1}{(n-k)!} \right) \nonumber \\
 	&=& n(n-2)!\left(\frac{2n-6}{4(n-4)!}-\sum\limits_{\substack{2\leq k\leq 4\\k~\text{is~odd}}}\frac{1}{(n-k)!} \right) + \nonumber\\
 	& &n(n-2)!\left( \sum\limits_{\substack{5\leq k\leq k_0-1 \\k~\text{is~odd} }} \left(\frac{2n-k-3}{(k+1)(n-k-1)!}-\frac{2n-k-1}{(k-1)(n-k+1)!}-\frac{1}{(n-k)!}\right)\right).
\label{eq:9}
\end{eqnarray}
Since $n\ge 7$, the first part of the lower bound (\ref{eq:9}) is positive. Note that the second part of (\ref{eq:9}) vanishes when $k_0 = 4$. Note also that, for $5\leq k\leq n-2$, we have
\begin{align}
& \frac{2n-k-3}{(k+1)(n-k-1)!}-\frac{2n-k-1}{(k-1)(n-k+1)!}-\frac{1}{(n-k)!}\nonumber\\
	&> \frac{2n-k-3}{(k+1)(n-k-1)!}-\frac{2}{(n-k)!}\nonumber\\
	&= \frac{(2n-k-3)(n-k)-2(k+1)}{(k+1)(n-k)!}\nonumber\\
	&\ge 0. \nonumber
\end{align}
Thus, the second part of the lower bound (\ref{eq:9}) is also positive. This completes the proof. \qedhere
\end{proof}

The main result in this section is as follows.

\begin{thm}
\label{cx:I_(n-2)} 
Suppose $n\ge 7$ and $\emptyset\ne I \subseteq \{2,3,\ldots,n-2\}$. Then the following statements hold:
\begin{itemize}
	\item[\rm (a)] if $I$ only contains odd numbers, then $\mathrm{Cay}(S_n,C(n,I))$ has two connected components and its strictly second largest eigenvalue is attained by $(n-1,1)$ and $(2,1^{n-2})$ with multiplicity $2(n-1)^2$;
	\item[\rm (b)] $\mathrm{Cay}(S_n,C(n,\{2,3\}))$ is connected and its second largest eigenvalue is attained by $(n-1,1)$ and $(1^n)$ with multiplicity $(n-1)^2+1$;
	\item[\rm (c)] if $I$ contains both even and odd numbers with the largest one odd and at least $5$, then $\mathrm{Cay}(S_n,C(n,I))$ is connected and its second largest eigenvalue is attained uniquely by $(1^{n})$ with multiplicity $1$;
    \item[\rm (d)] if  the largest number in $I$ is even, then $\mathrm{Cay}(S_n,C(n,I))$ is connected and its second largest eigenvalue is attained uniquely by $(n-1,1)$ with multiplicity $(n-1)^2$.
\end{itemize}
\end{thm}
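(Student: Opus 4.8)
The plan is to reduce everything to the four ``exceptional'' representations $(n)$, $(1^n)$, $(n-1,1)$ and $(2,1^{n-2})$, and then run a short case analysis driven by the parity structure of $I$. First I would settle connectivity. Since $C(n,I)$ is a union of conjugacy classes, the subgroup $\langle C(n,I)\rangle$ is normal in $S_n$, and as $n\ge 7$ the only normal subgroups are $\{e\}$, $A_n$ and $S_n$; being nontrivial (because $I\ne\emptyset$), $\langle C(n,I)\rangle$ is $A_n$ or $S_n$. A $k$-cycle has sign $(-1)^{k-1}$, so it lies in $A_n$ exactly when $k$ is odd. Hence if $I$ consists only of odd numbers then $C(n,I)\subseteq A_n$ and $\langle C(n,I)\rangle=A_n$, giving index $c=2$ and two connected components (case (a)); otherwise $I$ contains an even $k$, so $C(n,I)$ contains an odd permutation, $\langle C(n,I)\rangle=S_n$, and the graph is connected (cases (b), (c), (d)).

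Next, since $I\subseteq\{2,\ldots,n-2\}$, Lemma~\ref{lem:cycle_n-2} applies to every $k\in I$: for each $\zeta$ outside the four exceptional partitions we have $\tilde{\chi}_\zeta((k,1^{n-k}))<\tilde{\chi}_{(n-1,1)}((k,1^{n-k}))$, and summing against the positive weights $|C(n,k)|$ in \eqref{eq:eigenvalue} yields $\lambda_\zeta^I<\lambda_{(n-1,1)}^I$. Thus the strictly second largest eigenvalue is attained among the four exceptional representations. I would then record, via Lemma~\ref{lem:conjugate_character} and the equality of dimensions of conjugate partitions, that $\tilde{\chi}_{(1^n)}((k,1^{n-k}))=(-1)^{k-1}$ and $\tilde{\chi}_{(2,1^{n-2})}((k,1^{n-k}))=(-1)^{k-1}\tilde{\chi}_{(n-1,1)}((k,1^{n-k}))$. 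Consequently $\lambda_{(n)}^I=|C(n,I)|$ is the degree, $\lambda_{(1^n)}^I=\sum_{k\in I}(-1)^{k-1}|C(n,k)|$, and $|\lambda_{(2,1^{n-2})}^I|\le\lambda_{(n-1,1)}^I$, with the last inequality strict whenever some $k\in I$ is even (because $\tilde{\chi}_{(n-1,1)}((k,1^{n-k}))=\frac{n-k-1}{n-1}>0$ for $k\le n-2$).

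With this in hand the four cases reduce to comparing only $\lambda_{(n-1,1)}^I$, $\lambda_{(1^n)}^I$ and $\lambda_{(2,1^{n-2})}^I$. When $I$ is all odd, $(-1)^{k-1}=1$ forces $\lambda_{(1^n)}^I=|C(n,I)|$ (the second copy of the top eigenvalue, matching the two components) and $\lambda_{(2,1^{n-2})}^I=\lambda_{(n-1,1)}^I$, so the runner-up is $\lambda_{(n-1,1)}^I=\lambda_{(2,1^{n-2})}^I$ with multiplicity $(n-1)^2+(n-1)^2=2(n-1)^2$ by \eqref{eq:multi}, giving (a). For (b)--(d) the presence of an even $k$ makes $\lambda_{(1^n)}^I<|C(n,I)|$ and $\lambda_{(2,1^{n-2})}^I<\lambda_{(n-1,1)}^I$; I would then invoke Lemma~\ref{lem:diff_(1^n)_(n-1,1)_odd} (for $I=\{2,3\}$ it gives $\lambda_{(1^n)}^I=\lambda_{(n-1,1)}^I$, hence (b) with multiplicity $(n-1)^2+1$; for $I$ with largest part odd and at least $5$ it gives $\lambda_{(1^n)}^I>\lambda_{(n-1,1)}^I$, hence (c) with $(1^n)$ unique and multiplicity $1$) and Lemma~\ref{lem:diff_(1^n)_(n-1,1)_even} (for $I$ with largest part even it gives $\lambda_{(n-1,1)}^I>\lambda_{(1^n)}^I$, hence (d) with $(n-1,1)$ unique and multiplicity $(n-1)^2$).

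The routine but delicate part is the bookkeeping of the last paragraph: one must check that in each case the claimed winner is strictly larger than every other exceptional value, so that the multiplicity in \eqref{eq:multi} picks up exactly the advertised partitions and no stray $\zeta$, and that it is strictly smaller than the degree $|C(n,I)|$ so that it genuinely is the strictly second largest eigenvalue. The real mathematical content has already been isolated into Lemmas~\ref{lem:cycle_n-2}, \ref{lem:diff_(1^n)_(n-1,1)_odd} and \ref{lem:diff_(1^n)_(n-1,1)_even}; the main obstacle here is organising the sign analysis of $(1^n)$ and $(2,1^{n-2})$ cleanly enough that all four cases close simultaneously.
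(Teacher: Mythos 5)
Your proposal is correct and follows essentially the same route as the paper: reduce to the four exceptional partitions via Lemma~\ref{lem:cycle_n-2}, use Lemma~\ref{lem:conjugate_character} to compare $\lambda_{(n-1,1)}^I$ with $\lambda_{(2,1^{n-2})}^I$ according to the parity of the elements of $I$, and close the remaining cases with Lemmas~\ref{lem:diff_(1^n)_(n-1,1)_odd} and~\ref{lem:diff_(1^n)_(n-1,1)_even} together with the multiplicity formula \eqref{eq:multi}. Your normal-subgroup argument for connectivity is only a slightly more explicit version of what the paper asserts, so there is no substantive difference.
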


\begin{proof}
By \eqref{eq:eigenvalue} and Lemma \ref{lem:cycle_n-2}, for any $\zeta\vdash n$ other than $(n),(1^n),(n-1,1)$ and $(2,1^{n-2})$, we have
\begin{eqnarray}
	\lambda_\zeta^I &=& \sum_{k\in I} \binom{n}{k}(k-1)!\cdot \tilde{\chi}_\zeta((k,1^{n-k})) \nonumber \\
                    &<& \sum_{k\in I} \binom{n}{k}(k-1)!\cdot \tilde{\chi}_{(n-1,1)}((k,1^{n-k})) \nonumber \\
                    &=& \lambda_{(n-1,1)}^I. \nonumber
\end{eqnarray}
On the other hand, by \eqref{eq:odd_eigenvalue_(n-1,1)} we see that $\lambda_{(n-1,1)}^I$ is strictly smaller than 
$$
\lambda_{(n)}^I=\sum_{k\in I} \binom{n}{k}(k-1)!=|C(n,I)|,
$$ 
which is the degree of $\mathrm{Cay}(S_n,C(n,I))$. Therefore, the strictly second largest eigenvalue of $\mathrm{Cay}(S_n,C(n,I))$ can only be attained by partitions among $(1^n),(n-1,1)$ and $(2,1^{n-2})$. According to Lemma~\ref{lem:conjugate_character}, we have 
\begin{eqnarray}
\lambda_{(2,1^{n-2})}^I&=&\sum_{k\in I}  \binom{n}{k}(k-1)!\cdot \tilde{\chi}_{(2,1^{n-2})}((k,1^{n-k}))\nonumber\\
    	               &=& \sum_{k\in I}  \binom{n}{k}(k-1)!\cdot (-1)^{k-1} \tilde{\chi}_{(n-1,1)}((k,1^{n-k}))\nonumber\\
    	               &=&\sum_{k\in I} \binom{n}{k}(k-1)!\cdot (-1)^{k-1}\frac{n-k-1}{n-1}. \nonumber
\end{eqnarray}
Comparing with \eqref{eq:odd_eigenvalue_(n-1,1)}, we obtain that $\lambda_{(n-1,1)}^I\ge \lambda_{(2,1^{n-2})}^I$ and the strict inequality $\lambda_{(n-1,1)}^I> \lambda_{(2,1^{n-2})}^I$ holds if $I$ contains at least one even number.

In the case when $I$ contains only odd numbers, $\mathrm{Cay}(S_n,C(n,I))$ has two connected components each isomorphic to $\mathrm{Cay}(A_n, C(n,I))$ and $\lambda_{(n)}^I=\lambda_{(1^n)}^I>\lambda_{(n-1,1)}^I=\lambda_{(2,1^{n-2})}^I$. So its strictly second largest eigenvalue is only achieved by $(n-1,1)$ and $(2,1^{n-2})$. Note that both $\rho_{(n-1,1)}$ and $\rho_{(2,1^{n-2})}$ have dimension $n-1$ according to Table~\ref{tab:tab1} and Lemma~\ref{lem:conjugate_character}. We further deduce from \eqref{eq:multi} that the multiplicity of the strictly second largest eigenvalue is $2(n-1)^2$. This proves statement (a). In the other three cases there is at least one even number in $I$, and hence $\mathrm{Cay}(S_n,C(n,I))$ is connected and $\lambda_{(n-1,1)}^I>\lambda_{(2,1^{n-2})}^I$. So the second largest eigenvalue of $\mathrm{Cay}(S_n,C(n,I))$ can only be attained by $(1^n)$ or $(n-1,1)$.  Combining this with Lemmas~\ref{lem:diff_(1^n)_(n-1,1)_odd} and \ref{lem:diff_(1^n)_(n-1,1)_even}, we obtain (b), (c) and (d), where the multiplicities are calculated directly with the help of equation \eqref{eq:multi}.
\end{proof}

Theorem \ref{cx:I_(n-2)} implies the following result.  

\begin{cor}\label{cor:aldous_n-2}
Suppose $n\ge 7$ and $\emptyset\ne I \subseteq \{2,3,\ldots,n-2\}$. Then $\mathrm{Cay}(S_n,C(n,I))$ has the Aldous property if and only if one of the following conditions holds:
\begin{itemize}
	\item[\rm (a)]  $I=\{2,3\}$;
	\item[\rm (b)]  $I $ contains only odd numbers;
	\item[\rm (c)]  the largest number in $I$ is even.
\end{itemize}
\end{cor}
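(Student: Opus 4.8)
The plan is to read the corollary directly off Theorem~\ref{cx:I_(n-2)}. By the definition given in the preliminaries, $\mathrm{Cay}(S_n,C(n,I))$ has the Aldous property precisely when the standard representation $(n-1,1)$ is among the irreducible representations attaining the strictly second largest eigenvalue. So the task reduces to checking, in each of the four mutually exclusive cases of Theorem~\ref{cx:I_(n-2)}, whether $(n-1,1)$ appears in the corresponding list of attaining partitions, and then translating the resulting condition on $I$ back into the language of the corollary. No new spectral estimate is needed; everything analytic is already encapsulated in Theorem~\ref{cx:I_(n-2)}.

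First I would verify that the four cases of Theorem~\ref{cx:I_(n-2)} exhaust all nonempty $I\subseteq\{2,3,\ldots,n-2\}$. Indeed, either the largest element $I_{\mathrm{max}}$ of $I$ is even (giving part (d)), or $I_{\mathrm{max}}$ is odd, in which case $I$ either contains only odd numbers (part (a)), equals $\{2,3\}$ (part (b)), or contains both parities with $I_{\mathrm{max}}$ odd and at least $5$ (part (c)). The one bookkeeping point that needs explicit care is the following: if $I_{\mathrm{max}}$ is odd, $I$ contains some even number, and $I\neq\{2,3\}$, then $I_{\mathrm{max}}$ cannot equal $3$ (since $I_{\mathrm{max}}=3$ together with an even element would force $I=\{2,3\}$), hence $I_{\mathrm{max}}\geq 5$; this is exactly the hypothesis of part (c). This confirms that the four cases form a genuine partition of the admissible $I$.

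Next I would match the conditions. In parts (a), (b) and (d) of Theorem~\ref{cx:I_(n-2)} the partition $(n-1,1)$ is listed among the attaining partitions—together with $(2,1^{n-2})$, with $(1^n)$, or uniquely, respectively—so in each of these cases the Aldous property holds; and these three parts correspond respectively to conditions (b), (a) and (c) of the corollary. In part (c) of the theorem the strictly second largest eigenvalue is attained \emph{uniquely} by $(1^n)$, so $(n-1,1)$ does not attain it and the Aldous property fails. Since part (c) of the theorem is precisely the complement of the union of conditions (a), (b), (c) of the corollary, the equivalence follows. The only thing to be careful about is ensuring the correspondence between the theorem's parts and the corollary's conditions is stated unambiguously; the genuine mathematical substance lies entirely in Theorem~\ref{cx:I_(n-2)}, so I expect no real obstacle beyond this combinatorial bookkeeping.
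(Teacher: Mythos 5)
Your proposal is correct and matches the paper exactly: the paper gives no separate argument, stating only that Theorem~\ref{cx:I_(n-2)} implies the corollary, and your case-matching (including the observation that an odd maximum together with an even element and $I\neq\{2,3\}$ forces $I_{\mathrm{max}}\geq 5$) is precisely the intended bookkeeping. Nothing further is needed.
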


The next corollary of Theorem \ref{cx:I_(n-2)} confirms Conjecture 1.4 in \cite{MR}.

\begin{cor}\label{cor1}
For any $n \ge 4$ and $2 \leq k \leq n-2$, the strictly second largest eigenvalue of $\mathrm{Cay}(S_n, C(n,k))$ is attained by the standard representation of $S_n$, and its value is
\begin{eqnarray}
	\lambda_{(n-1,1)}^{\{k\}}=\frac{n-k-1}{n-1}\binom{n}{k}(k-1)!. \label{eq:cor1}
\end{eqnarray}
\end{cor}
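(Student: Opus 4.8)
The plan is to prove Corollary~\ref{cor1} by showing it follows as a special case of Theorem~\ref{cx:I_(n-2)} once the small values of $n$ are handled separately. Taking $I=\{k\}$ a singleton with $2\le k\le n-2$, the connection set $C(n,\{k\})$ consists of all $k$-cycles, and the claimed value in \eqref{eq:cor1} is exactly $\lambda_{(n-1,1)}^{\{k\}}$ as computed by \eqref{eq:odd_eigenvalue_(n-1,1)}. So the entire content is to verify that the strictly second largest eigenvalue is indeed $\lambda_{(n-1,1)}^{\{k\}}$, i.e.\ that the standard representation attains it.

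First I would dispose of the case $n\ge 7$ using Theorem~\ref{cx:I_(n-2)} directly. For a singleton $I=\{k\}$ with $2\le k\le n-2$, I split into two cases according to the parity of $k$ (note $I_{\mathrm{max}}=k$). If $k$ is even, then the largest number in $I$ is even and Theorem~\ref{cx:I_(n-2)}(d) applies, giving that the strictly second largest eigenvalue is attained uniquely by $(n-1,1)$; this is precisely the Aldous property. If $k$ is odd, then $I$ contains only odd numbers, so Theorem~\ref{cx:I_(n-2)}(a) applies: the graph has two components and the strictly second largest eigenvalue is attained by $(n-1,1)$ and $(2,1^{n-2})$, so in particular it is attained by the standard representation $(n-1,1)$. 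In both cases the value equals $\lambda_{(n-1,1)}^{\{k\}}$, which by \eqref{eq:odd_eigenvalue_(n-1,1)} with $I=\{k\}$ is exactly $\frac{n-k-1}{n-1}\binom{n}{k}(k-1)!$, establishing \eqref{eq:cor1}.

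The remaining obstacle is the small range $4\le n\le 6$, which Theorem~\ref{cx:I_(n-2)} does not cover since it requires $n\ge 7$. For these finitely many cases I would argue by direct computation: there are only the pairs $(n,k)\in\{(4,2),(5,2),(5,3),(6,2),(6,3),(6,4)\}$ to check. For each such pair I compute all eigenvalues $\lambda_\zeta^{\{k\}}=\binom{n}{k}(k-1)!\,\tilde{\chi}_\zeta((k,1^{n-k}))$ over all partitions $\zeta\vdash n$ using the character values from Table~\ref{tab:tab1} (supplemented, where needed, by the Murnaghan--Nakayama Rule of Theorem~\ref{thm:MN} for the few partitions not listed), and verify that after discarding the degree $\lambda_{(n)}^{\{k\}}=|C(n,k)|$ (and, when $k$ is odd, the equal value $\lambda_{(1^n)}^{\{k\}}$ coming from the second component), the next-largest value is $\lambda_{(n-1,1)}^{\{k\}}$. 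The only subtlety is confirming that no partition other than $(n)$ (and $(1^n)$ in the odd case) gives an eigenvalue exceeding $\lambda_{(n-1,1)}^{\{k\}}$; since $n$ is small the number of partitions is tiny and this is a finite verification.

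The main step where care is needed is the small-$n$ verification, not because it is deep but because the uniform argument of Theorem~\ref{cx:I_(n-2)} rests on Lemma~\ref{lem:cycle_n-2} (requiring $n\ge 7$), and for $n\le 6$ the inequality $\tilde{\chi}_\zeta(\gamma)<\tilde{\chi}_{(n-1,1)}(\gamma)$ must be checked by hand rather than invoked. One should also note explicitly that for $n\ge 7$ the statement $2\le k\le n-2$ is exactly the hypothesis range of Theorem~\ref{cx:I_(n-2)}, so no gap arises there. Once the handful of small cases are confirmed, the corollary follows for all $n\ge 4$ and $2\le k\le n-2$, simultaneously settling \cite[Conjecture 1.4]{MR}.
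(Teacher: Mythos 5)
Your proposal is correct and follows essentially the same route as the paper: the paper likewise disposes of $n=4,5,6$ by direct verification and then invokes parts (a) and (d) of Theorem~\ref{cx:I_(n-2)} for odd and even $k$ respectively, reading off the value from \eqref{eq:odd_eigenvalue_(n-1,1)}. Your treatment merely spells out the small-$n$ check in more detail than the paper's ``one can easily verify''.
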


\begin{proof}
One can easily verify this result when $n$ is $4, 5$ or $6$. Now suppose $n\ge 7$. The statements (a) and (d) in Theorem~\ref{cx:I_(n-2)} imply that the standard representation $\rho_{(n-1,1)}$ achieves the strictly second largest eigenvalue of $\mathrm{Cay}(S_n, C(n,k))$ for odd $k$ and even $k$, respectively. The value in \eqref{eq:cor1} is simply derived from \eqref{eq:odd_eigenvalue_(n-1,1)}.
\end{proof}

\section{$\mathrm{Cay}(S_n,C(n,I))$ with $I\cap\{n-1,n\}=\{n-1\}$} 
  \label{sec:n-1}

\begin{lem}\label{cor: gap_between_(1^n)_and_(n-1,1)}
Suppose $n\ge 8$ is even and $\{n-1\}\subseteq I\subseteq \{2,3,\ldots,n-1\}$. Then
$$
\lambda_{(1^n)}^I-\lambda_{(n-1,1)}^I>\frac{n(n-5)}{3}(n-3)!.
$$
\end{lem}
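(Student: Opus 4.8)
The plan is to start from the character formulas \eqref{eq: odd_eigenvalue_(1^n)} and \eqref{eq:odd_eigenvalue_(n-1,1)}, which give
$$\lambda_{(1^n)}^I-\lambda_{(n-1,1)}^I=\sum_{k\in I}\binom{n}{k}(k-1)!\left((-1)^{k-1}-\frac{n-k-1}{n-1}\right).$$
Since $n$ is even, the value $n-1\in I$ is odd and its bracket equals $(-1)^{n-2}-0=1$, so this term contributes $n(n-2)!$, the main positive quantity. For odd $k$ the bracket equals $\frac{k}{n-1}>0$, while for even $k$ it equals $-\frac{2n-k-2}{n-1}<0$; moreover, as $n-1$ is odd, every even $k\in I$ satisfies $k\le n-2$. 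Dropping the nonnegative odd terms with $k<n-1$ and adjoining the remaining even terms (those even $k\le n-2$ not in $I$) only decreases the sum, so using the identity $\binom{n}{k}(k-1)!\,\frac{2n-k-2}{n-1}=n(n-2)!\cdot\frac{2n-k-2}{k(n-k)!}$ I obtain the worst-case lower bound
$$\lambda_{(1^n)}^I-\lambda_{(n-1,1)}^I\ \ge\ n(n-2)!\,(1-S),\qquad S:=\sum_{\substack{2\le k\le n-2\\ k\ \text{even}}}\frac{2n-k-2}{k(n-k)!}.$$

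Because $\frac{n(n-5)}{3}(n-3)!=n(n-2)!\cdot\frac{n-5}{3(n-2)}$, it then suffices to prove the normalized inequality $1-S>\frac{n-5}{3(n-2)}$. To handle this I would reuse the telescoping decomposition already carried out in the proof of Lemma~\ref{lem:diff_(1^n)_(n-1,1)_odd}, specialized to $k_0=n-1$. Writing $1=\frac{1}{(n-5)!}+\sum_{\substack{6\le k\le n-2,\ k\ \text{even}}}\big(\frac{1}{(n-k-1)!}-\frac{1}{(n-k+1)!}\big)$ and distributing over the terms of $S$ expresses $1-S$ as
$$\Big(\tfrac{1}{(n-5)!}-\tfrac{2n-4}{2(n-2)!}-\tfrac{2n-6}{4(n-4)!}\Big)+\sum_{\substack{6\le k\le n-2\\ k\ \text{even}}}\Big(\tfrac{1}{(n-k-1)!}-\tfrac{1}{(n-k+1)!}-\tfrac{2n-k-2}{k(n-k)!}\Big).$$
That earlier proof already shows, for $n\ge 7$, that the first bracket is positive and that every summand in the sum is nonnegative for $6\le k\le n-2$.

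The crucial observation—which is exactly what makes the constant $\frac{n(n-5)}{3}(n-3)!$ the right one—is that the top summand, at $k=n-2$, evaluates to
$$\frac{1}{1!}-\frac{1}{3!}-\frac{n}{2(n-2)}=\frac56-\frac{n}{2(n-2)}=\frac{n-5}{3(n-2)},$$
precisely the target threshold. Peeling this summand off, $1-S$ equals $\frac{n-5}{3(n-2)}$ plus the first bracket (which is $>0$ for $n\ge 8$, since after clearing denominators its sign is governed by $n^2-8n+13>0$) plus the remaining summands for $6\le k\le n-4$ (each nonnegative). Hence $1-S>\frac{n-5}{3(n-2)}$ strictly, and multiplying back by $n(n-2)!$ yields the claimed bound.

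The main obstacle is precisely this final matching. A crude global estimate of $S$ (for example comparing with $\cosh(1)-1$ plus corrections) is too lossy near $n=8$, where the inequality holds only by a small margin; the decomposition must therefore be exploited exactly rather than bounded uniformly. The clean identity that the $k=n-2$ piece equals $\frac{n-5}{3(n-2)}$ supplies the exact threshold, while the genuinely positive leftover terms deliver the strict inequality. Beyond this, I would only need to verify the elementary arithmetic explicitly for $n\ge 8$ (the sign of the $k=2,4$ bracket via $n^2-8n+13>0$), all remaining ingredients being inherited directly from Lemma~\ref{lem:diff_(1^n)_(n-1,1)_odd}.
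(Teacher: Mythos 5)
Your proposal is correct and follows essentially the same route as the paper: both specialize the lower bound \eqref{ineq:diff_1^n_n-1,1_odd} to $k_0=n-1$, discard the positive first bracket and the positive summands for $6\le k\le n-4$, and observe that the remaining $k=n-2$ summand equals exactly $n(n-2)!\cdot\frac{n-5}{3(n-2)}=\frac{n(n-5)}{3}(n-3)!$. Your write-up simply makes explicit (e.g.\ via $n^2-8n+13>0$) the positivity facts that the paper inherits from the proof of Lemma~\ref{lem:diff_(1^n)_(n-1,1)_odd}.
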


\begin{proof}
Since $n \ge 8$ is even, the largest number $k_0 = n-1$ in $I$ is odd and hence (\ref{ineq:diff_1^n_n-1,1_odd}) can be applied to the current situation. By this inequality, we obtain
  \begin{eqnarray*}
    \lambda_{(1^n)}^I-\lambda_{(n-1,1)}^I &\ge& n(n-2)!\left(\frac{1}{(n-5)!}-\sum\limits_{\substack{2\leq k\leq 5\\ k\text{~is~even}}} \frac{2n-k-2}{k(n-k)!}\right) + \nonumber\\
   & & n(n-2)!\left(\sum\limits_{\substack{6\leq k\leq n-2\\ k\text{~is~even}}}\frac{1}{(n-k-1)!}-\frac{1}{(n-k+1)!}-\frac{2n-k-2}{k(n-k)!}\right) \\
   &>& n(n-2)!\left(\sum\limits_{\substack{k=n-2\\ k\text{~is~even}}}\frac{1}{(n-k-1)!}-\frac{1}{(n-k+1)!}-\frac{2n-k-2}{k(n-k)!}\right) \\
   &=& \frac{n(n-5)}{3}(n-3)!
  \end{eqnarray*} 
as desired. 
\end{proof}

\begin{lem}
\label{lem:(n-m,1^m)}
Suppose $n\ge 7$. The following hold:
\begin{itemize} 
	\item[\rm (a)] if $k=n-1,n-2$ or $n-3$ and $\zeta=(n-m,1^{m})$ with $3\leq m\leq n-4$, then
    $$
    \tilde{\chi}_\zeta((k,1^{n-k}))=0;
    $$ 
	\item[\rm (b)] if $2\leq k\leq n-4$ and $\zeta=(n-m,1^{m})$ with $4\leq m\leq n-5$, then
\begin{equation}
\label{eq:3chi}
\tilde{\chi}_\zeta((k,1^{n-k}))< \tilde{\chi}_{(n-3,1^3)}((k,1^{n-k}))< \tilde{\chi}_{(n-2,1^2)}((k,1^{n-k})).
\end{equation}
\end{itemize}
\end{lem}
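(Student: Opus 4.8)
The plan is to prove both parts with the Murnaghan--Nakayama rule (Theorem~\ref{thm:MN}), using that the only border strips of a hook are straight horizontal or vertical rim hooks. For part (a), I would note that the hook lengths of $\zeta=(n-m,1^m)$ are exactly $n$ at the corner $(1,1)$, the values $1,\dots,n-m-1$ on the rest of the first row, and the values $1,\dots,m$ on the rest of the first column. When $3\le m\le n-4$ each of these is either $n$ or at most $\max\{n-m-1,m\}\le n-4$, so none equals $n-1,n-2$ or $n-3$. Since the border strips of $\zeta$ of a given size $k$ are in bijection with the cells of hook length $k$, the diagram has no border strip of size $k\in\{n-1,n-2,n-3\}$; applying Theorem~\ref{thm:MN} with $\rho$ the $k$-cycle and $\pi$ the identity on the remaining points gives an empty sum, so $\tilde{\chi}_\zeta((k,1^{n-k}))=0$.

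For part (b) I would first record an exact formula. In $(n-m,1^m)$ there are at most two border strips of size $k\le n-4$: the last $k$ boxes of the first row (present iff $m\le n-k-1$, height $0$) and the bottom $k$ boxes of the first column (present iff $m\ge k$, height $k-1$); removing them leaves the hooks $(n-m-k,1^m)$ and $(n-m,1^{m-k})$. Hence, by Theorem~\ref{thm:MN} and the Hook-Length Formula (Theorem~\ref{thm:Hook}), and writing $(-1)^{k-1}=(-1)^{k+1}$,
\[
\chi_{(n-m,1^m)}((k,1^{n-k}))=\binom{n-k-1}{m}+(-1)^{k+1}\binom{n-k-1}{m-k},
\]
with the convention that a binomial coefficient with lower index outside the admissible range vanishes. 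Dividing by $\dim\rho_{(n-m,1^m)}=\binom{n-1}{m}$ and using $\binom{n-k-1}{m-k}=\binom{n-k-1}{n-1-m}$ together with $\binom{n-1}{m}=\binom{n-1}{n-1-m}$, I would put the normalized character into the symmetric form
\[
f(m):=\tilde{\chi}_{(n-m,1^m)}((k,1^{n-k}))=g(m)+(-1)^{k+1}g(n-1-m),\qquad g(j):=\frac{\binom{n-k-1}{j}}{\binom{n-1}{j}},
\]
the second term reflecting Lemma~\ref{lem:conjugate_character} applied to the conjugate hook $(m+1,1^{n-1-m})$.

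The two facts I would establish about $g$ are that it is strictly decreasing on $\{0,1,\dots,n-k-1\}$ and vanishes beyond, and that, viewed as the polynomial $g(x)=\prod_{i=1}^{k}\frac{n-i-x}{n-i}$ whose roots $n-1,\dots,n-k$ all lie to the right of $[0,n-k-1]$, it is convex there, since
\[
g''(x)=g(x)\Bigl[\bigl(\textstyle\sum_{i=1}^{k}(n-i-x)^{-1}\bigr)^{2}-\textstyle\sum_{i=1}^{k}(n-i-x)^{-2}\Bigr]\ge0.
\]
Granting these, $f(3)<f(2)$ follows from
\[
f(2)-f(3)=\bigl[g(2)-g(3)\bigr]+(-1)^{k+1}\bigl[g(n-3)-g(n-4)\bigr],
\]
which is positive: for even $k$ both brackets contribute nonnegatively (the second because $g(n-4)\ge g(n-3)$), while for odd $k$ the correction is $0$ when $k\ge5$ and equals $-1/\binom{n-1}{3}$ when $k=3$, dominated by the genuine gap $g(2)-g(3)$. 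Likewise, for $4\le m\le n-5$,
\[
f(3)-f(m)=\bigl[g(3)-g(m)\bigr]+(-1)^{k+1}\bigl[g(n-4)-g(n-1-m)\bigr],
\]
and the even-$k$ case is immediate since $g(3)>g(m)$ and $g(n-1-m)\ge g(n-4)$.

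The main obstacle is the odd-$k$ case of $f(m)<f(3)$, where the correction adds rather than cancels and one must show $g(m)+g(n-1-m)<g(3)$ (for $k\ge5$; the $k=3$ case differs only by the harmless term $1/\binom{n-1}{3}$). Here I would invoke convexity of $g$: on the range $k\le m\le n-1-k$ the map $m\mapsto g(m)+g(n-1-m)$ is convex and symmetric about $(n-1)/2$, hence maximized at the endpoints, where its value $g(k)+g(n-k-1)=g(k)+1/\binom{n-1}{k}$ is below $g(3)$, because $g(k)\le g(5)<g(3)$ and the correction $1/\binom{n-1}{k}$ is of smaller order than the gap $g(3)-g(k)$; for $m$ outside this range one of the two $g$-terms vanishes, leaving a single $g$ with argument in $[4,k-1]$, hence at most $g(4)<g(3)$. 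Assembling the cases gives $f(m)<f(3)<f(2)$ for all admissible $m$, completing part (b). I expect the delicate points to be the small, non-vacuous values of $n$ (the statement is empty unless $n\ge9$) and the verification that the lower-order correction terms are genuinely dominated by the positive gaps between neighbouring values of $g$.
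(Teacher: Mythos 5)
Your part (a) coincides with the paper's argument: the paper simply asserts that $(n-m,1^m)$ with $3\le m\le n-4$ contains no $k$-border strip for $k\in\{n-1,n-2,n-3\}$ and invokes Murnaghan--Nakayama, and your hook-length computation (the hook lengths of $(n-m,1^m)$ are $n$ together with numbers at most $\max\{n-m-1,m\}\le n-4$) supplies exactly the justification the paper leaves implicit. Part (b), however, is a genuinely different route. The paper obtains the second inequality from the explicit formulas for $\tilde{\chi}_{(n-3,1^3)}$ and $\tilde{\chi}_{(n-2,1^2)}$ in Table~\ref{tab:tab1}, and proves the first inequality by induction on $n$: it bounds $\tilde{\chi}_{(n-m,1^m)}((k,1^{n-k}))$ by the maximum of the normalized characters of the two hooks produced by the Branching Rule, applies the induction hypothesis in $S_{n-1}$, and handles the boundary case $k=n-4$ (where the hypothesis is unavailable) separately via the absence of border strips. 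You instead derive the exact value $\chi_{(n-m,1^m)}((k,1^{n-k}))=\binom{n-k-1}{m}+(-1)^{k-1}\binom{n-k-1}{m-k}$ --- correct, since for $k<n$ a hook admits at most the two straight $k$-strips you describe --- and reduce everything to monotonicity and convexity of $g(j)=\binom{n-k-1}{j}/\binom{n-1}{j}$. This buys a non-inductive, fully explicit proof; its cost is the endpoint estimate $g(k)+1/\binom{n-1}{k}<g(3)$ for odd $k\ge 5$ with $k\le (n-1)/2$, which you assert but do not verify. It does close: multiplying by $\binom{n-1}{k}$ and using $\binom{n-1}{k}\,g(3)=\binom{n-4}{k}$, the claim becomes $\binom{n-k-1}{k}+1<\binom{n-4}{k}$, and since $n\ge 2k+1$ in this subcase, $\binom{n-4}{k}-\binom{n-k-1}{k}\ge\binom{n-4}{k}-\binom{n-5}{k}=\binom{n-5}{k-1}\ge\binom{k+1}{2}>1$. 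Two small points worth making explicit: strictness of the interior bound follows because $g$ is \emph{strictly} convex on the relevant interval (the bracket in your formula for $g''$ is strictly positive for $k\ge 2$), and this same strict convexity settles $k=3$ outright, since there the whole range $4\le m\le n-5$ is interior to $[3,n-4]$ and $f(3)$ is itself the common endpoint value. With these observations your argument is complete.
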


\begin{proof}
(a) If $k=n-1,n-2$ or $n-3$, then $\zeta=(n-m,1^{m})$ with $3\leq m\leq n-4$ does not contain any $k$-border strip. Hence $\tilde{\chi}_\zeta((k,1^{n-k}))=0$ by the Murnaghan-Nakayama Rule.

(b) Suppose $2\leq k\leq n-4$ and $\zeta=(n-m,1^{m})$, where $n \ge 7$ and $4\leq m\leq n-5$.  According to Table~\ref{tab:tab1}, we have 
\begin{eqnarray}\label{eq:n-3,1,1,1}
	\tilde{\chi}_{(n-3,1^3)}((k,1^{n-k}))=\frac{(n-k-1)(n-k-2)(n-k-3)-6(n-k-1)c_2+6c_3}{(n-1)(n-2)(n-3)}
\end{eqnarray}
\begin{eqnarray*}
	\tilde{\chi}_{(n-2,1^2)}((k,1^{n-k}))=\frac{(n-k-1)(n-k-2)-2c_2}{(n-1)(n-2)},
\end{eqnarray*}
where $c_i$ is the number of terms in $(k,1^{n-k})$ that are equal to $i$. Using these expressions, one can easily verify the second inequality in \eqref{eq:3chi}. 

It remains to prove the first inequality in \eqref{eq:3chi} for $n \ge 9$. (Note that this inequality vanishes when $n = 7$ or $8$ as $4 \le m \le n-5$ in $\zeta$.) We achieve this by induction on $n\ge 9$. Note from (\ref{eq:n-3,1,1,1}) and Lemma~\ref{lem:conjugate_character} that
\begin{eqnarray}\label{ine:m=3_m=2}
	0< \tilde{\chi}_{(n-3,1^3)}((k,1^{n-k}))=|\tilde{\chi}_{(4,1^{n-4})}((k,1^{n-k}))|.
\end{eqnarray} 
It is straightforward to verify that the first inequality in \eqref{eq:3chi} holds when $n=9$, $2\leq k\leq 5$ and $\zeta=(5,1^{4})$. Assume that $n-1\ge 9$ and for every $\zeta=(n-1-m,1^m)$ with $4\leq m\leq n-6$ and any $2\leq k\leq n-5$ the following holds:  
  \begin{eqnarray}\label{ine:induction_assumption}
  	\tilde{\chi}_\zeta((k,1^{n-1-k}))< \tilde{\chi}_{(n-4,1^3)}((k,1^{n-1-k}))=|\tilde{\chi}_{(4,1^{n-5})}((k,1^{n-1-k}))|.
  \end{eqnarray}
Now let us consider $\zeta=(n-m,1^m)\vdash n$ with $4\leq m\leq n-5$ and $2\leq k\leq n-4$. If $k=n-4$, then for any $\zeta=(n-m,1^{m})$ with $4\leq m\leq n-5$, the Young diagram of $\zeta$ contains no $(n-4)$-border strip. Thus we know from the Murnaghan-Nakayama Rule and inequality~\eqref{ine:m=3_m=2} that for $4\leq m\leq n-5,$ $$0=\tilde{\chi}_{(n-m,1^m)}((n-4,1^{4}))<\tilde{\chi}_{(n-3,1^3)}((n-4,1^{4})).$$ If $2\leq k\leq n-5$, then for any $\zeta=(n-m,1^m)$ with $4\leq m\leq n-5$, we apply the Branching Rule to the following normalized character and obtain 
\begin{eqnarray}
	& &\tilde{\chi}_\zeta((k,1^{n-k})) \nonumber\\
	                               &=& \frac{\sum\limits_{\zeta-\Box} \chi_{\zeta-\Box}((k,1^{n-1-k}))}{\sum\limits_{\zeta-\Box} \chi_{\zeta-\Box}((1^{n-1}))} \nonumber\\ 
	                               &=& \frac{ \chi_{(n-m,1^{m-1})}((k,1^{n-1-k})) +  \chi_{(n-m-1,1^{m})}((k,1^{n-1-k})) }{\chi_{(n-m,1^{m-1})}((1^{n-1})) +  \chi_{(n-m-1,1^{m})}((1^{n-1}))} \nonumber\\ 
	                               &\leq& \max\left\{ \frac{ \chi_{(n-m,1^{m-1})}((k,1^{n-1-k}))}{\chi_{(n-m,1^{m-1})}((1^{n-1})) } , \frac{\chi_{(n-m-1,1^{m})}((k,1^{n-1-k})) }{\chi_{(n-m-1,1^{m})}((1^{n-1}))}\right\} \nonumber\\ 
	                               &\leq &  \tilde{\chi}_{(n-4,1^3)}((k,1^{n-1-k})) \label{ine:by_induction_assumption} \\ 
	                               &=& \frac{(n-1-k-1)(n-1-k-2)(n-1-k-3)-6(n-1-k-1)c_2+6c_3}{(n-2)(n-3)(n-4)}  \nonumber\\
	                               &<& \frac{(n-k-1)(n-k-2)(n-k-3)-6(n-k-1)c_2+6c_3}{(n-1)(n-2)(n-3)} \nonumber\\ 
	                               &=& \tilde{\chi}_{(n-3,1^3)}((k,1^{n-k})), \nonumber
\end{eqnarray}
where (\ref{ine:by_induction_assumption}) follows from the induction hypothesis~(\ref{ine:induction_assumption}).
\end{proof}

\begin{lem}
\label{re:n-m,1^m} 
Suppose $n\ge 7$ and $n\in I\subseteq \{2,3,\ldots,n\} $. Then the following hold:
\begin{itemize}
	\item[\rm (a)]  $\max_{1\leq m\leq n-1}{\lambda_{(n-m,1^m)}^I}$ can only be attained by $m=1,2,n-3,n-2$ or $n-1$; 
	\item[\rm (b)] $\lambda_{(n-2,1^2)}^I\ge\lambda_{(n-1,1)}^I$ and the equality holds if and only if $I=\{2,3,\ldots,n-2,n\}$ or $I=\{2,3,\ldots,n-1,n\}$; 
	\item[\rm (c)] if $n$ is even and $I=\{2,3,\ldots,n-2,n\}$ or $\{2,3,\ldots,n-1,n\}$, then $\lambda_{(2,1^{n-2})}^I>\lambda_{(n-1,1)}^I=\lambda_{(n-2,1^2)}^I$;
	\item[\rm (d)] $\lambda_{(n-2,1^2)}^I\ge \lambda_{(3,1^{n-3})}^I$ and the equality holds if and only if $I$ contains only odd numbers other than $n-1$ and $n-2$.
\end{itemize}
\end{lem}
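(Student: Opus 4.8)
The plan is to write every hook partition as $\zeta_m=(n-m,1^m)$ for $0\le m\le n-1$, so that the four partitions in the statement are $\zeta_1=(n-1,1)$, $\zeta_2=(n-2,1^2)$, $\zeta_{n-3}=(3,1^{n-3})$ and $\zeta_{n-2}=(2,1^{n-2})$, and to abbreviate $c_k=\binom{n}{k}(k-1)!=n!/\big(k(n-k)!\big)$, the number of $k$-cycles, so that $\lambda_{\zeta_m}^I=\sum_{k\in I}c_k\,\tilde{\chi}_{\zeta_m}((k,1^{n-k}))$ by \eqref{eq:eigenvalue}. Two inputs will control the whole argument. By Lemma~\ref{lem:character_for_n_cycles} and the Hook-Length Formula the $n$-cycle contribution is $c_n\,\tilde{\chi}_{\zeta_m}((n))=(-1)^m(n-1)!/\binom{n-1}{m}$, whose absolute value is largest near the ends $m=1,n-2$ and smallest in the middle; and since $\zeta_m'=\zeta_{n-1-m}$, Lemma~\ref{lem:conjugate_character} gives $\tilde{\chi}_{\zeta_{n-1-m}}((k,1^{n-k}))=(-1)^{k-1}\tilde{\chi}_{\zeta_m}((k,1^{n-k}))$, which I will use to pass between small and large $m$. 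Because $n\in I$, the $n$-cycle term is always present.

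For (a) I would, for a middle index $3\le m\le n-4$, split $\lambda_{\zeta_m}^I$ over the ranges $2\le k\le n-4$, then $k\in\{n-3,n-2,n-1\}$, then $k=n$. The middle range contributes $0$ by Lemma~\ref{lem:(n-m,1^m)}(a). On the low range I would show $\tilde{\chi}_{\zeta_m}((k,1^{n-k}))<\tilde{\chi}_{\zeta_2}((k,1^{n-k}))$ for each such $k$: for $4\le m\le n-5$ this is the chain in Lemma~\ref{lem:(n-m,1^m)}(b), for $m=3$ it is its second inequality, and for $m=n-4$ it follows from the conjugation identity (even $k$ turn $\tilde{\chi}_{\zeta_3}>0$ into a negative value while odd $k$ leave it equal to $\tilde{\chi}_{\zeta_3}<\tilde{\chi}_{\zeta_2}$), using that $\tilde{\chi}_{\zeta_2}((k,1^{n-k}))>0$ on $2\le k\le n-4$. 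Finally the $n$-term obeys $(-1)^m(n-1)!/\binom{n-1}{m}\le(n-1)!/\binom{n-1}{3}=6(n-4)!<2(n-3)!=(n-1)!/\binom{n-1}{2}$, the last being the $n$-term of $\zeta_2$. Summing the three ranges yields $\lambda_{\zeta_m}^I<\lambda_{\zeta_2}^I$ for every middle $m$, so the maximum can only occur at $m\in\{1,2,n-3,n-2,n-1\}$.

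For (b) I would compute $d_k:=\tilde{\chi}_{\zeta_2}((k,1^{n-k}))-\tilde{\chi}_{\zeta_1}((k,1^{n-k}))$ from Table~\ref{tab:tab1}, obtaining $d_k=-k(n-k-1)/\big((n-1)(n-2)\big)$ for $3\le k\le n$ and $d_2=-2/(n-1)$, so that $d_k<0$ for $2\le k\le n-2$, $d_{n-1}=0$ and $d_n>0$. The crux is the identity $\sum_{k=2}^{n}c_kd_k=0$, equivalently $\lambda_{\zeta_1}^{I_0}=\lambda_{\zeta_2}^{I_0}$ for $I_0=\{2,\dots,n\}$, which I would establish by the telescoping $(j-1)/j!=1/(j-1)!-1/j!$ after putting $j=n-k$. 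Then $\lambda_{\zeta_2}^I-\lambda_{\zeta_1}^I=-\sum_{k\in I_0\setminus I}c_kd_k\ge0$, because $n\in I$ forces $I_0\setminus I\subseteq\{2,\dots,n-1\}$; equality holds exactly when $I_0\setminus I\subseteq\{n-1\}$, i.e.\ $I=\{2,\dots,n-2,n\}$ or $\{2,\dots,n-1,n\}$.

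Parts (c) and (d) I would reduce to even-length sums via conjugation. Since $\zeta_{n-2}=\zeta_1'$ and $\zeta_{n-3}=\zeta_2'$, the conjugation identity gives $\lambda_{\zeta_{n-2}}^I-\lambda_{\zeta_1}^I=-2\sum_{k\in I,\,k\text{ even}}c_k\,\tilde{\chi}_{\zeta_1}((k,1^{n-k}))$ and $\lambda_{\zeta_2}^I-\lambda_{\zeta_{n-3}}^I=2\sum_{k\in I,\,k\text{ even}}c_k\,\tilde{\chi}_{\zeta_2}((k,1^{n-k}))$. For (d) one checks $\tilde{\chi}_{\zeta_2}((k,1^{n-k}))\ge0$ for even $k$, vanishing only at $k\in\{n-2,n-1\}$, so this sum is $\ge0$ with equality precisely when the even members of $I$ all lie in $\{n-2,n-1\}$, which is the stated condition. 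For (c), using the equality $\lambda_{\zeta_1}^I=\lambda_{\zeta_2}^I$ from (b) and the fact that the $k=n$ summand supplies $-(n-2)!$, the claim $\lambda_{\zeta_{n-2}}^I>\lambda_{\zeta_1}^I$ becomes $\sum_{2\le k\le n-2,\ k\text{ even}}c_k\,(n-k-1)/(n-1)<(n-2)!$; after dividing by $(n-2)!$ this reads $\sum_k\big(1/k+1/(n-k)\big)/(n-k-2)!<1$, whose dominant summand $k=n-2$ equals $\tfrac12+1/(n-2)$ while the remainder is a fast-decaying tail controlled by $\sum_{i\text{ even}}1/\big(i(i-2)!\big)=1-e^{-1}$. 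I expect the genuinely delicate step to be this estimate: it is nearly sharp (the left-hand side equals $17/18$ at $n=8$), so I would verify $n=8$ by hand and bound the tail carefully for $n\ge10$; the other place needing care is the range bookkeeping in (a), particularly the conjugation treatment of $m=n-4$ and the degenerate small-$n$ overlaps of the index ranges.
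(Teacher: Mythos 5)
Your proposal is correct and follows essentially the same route as the paper: part (a) via Lemma~\ref{lem:(n-m,1^m)} together with the $n$-cycle character values, part (b) via the telescoping identity $\sum_k\binom{n}{k}(k-1)!\,d_k=0$ over $\{2,\dots,n\}$ (which is exactly the paper's computation, just packaged as ``removing elements of $I$ only deletes nonpositive terms''), and parts (c) and (d) via conjugation and the same near-sharp estimate $\sum_{k \text{ even}}\frac{n}{k(n-k)(n-k-2)!}<1$. The only detail worth adding is the explicit remark that $\tilde{\chi}_{(n-2,1^2)}((k,1^{n-k}))\ge 0$ for $k\in\{n-3,n-2,n-1\}$ so that the middle range in (a) cannot favour $\zeta_m$ over $\zeta_2$.
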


\begin{proof}
    (a) Note from Table~\ref{tab:tab1} that $\tilde{\chi}_{(n-2,1^2)}((n-3,1^{3}))>0$ and $\tilde{\chi}_{(n-2,1^2)}((n-2,1^{2}))=\tilde{\chi}_{(n-2,1^2)}((n-1,1))=0$. Combining the previous lemma with Lemma~\ref{lem:character_for_n_cycles}, we obtain that for $2\leq k\leq n-3$ or $k=n$, 
	$$\tilde{\chi}_{(n-m,1^{m})}((k,1^{n-k}))<\tilde{\chi}_{(n-2,1^2)}((k,1^{n-k})), \quad 3\leq m\leq n-4$$
	and for $k=n-1$ or $n-2$, 
	$$\tilde{\chi}_{(n-m,1^{m})}((k,1^{n-k}))=0, \quad 2\leq m\leq n-3.$$
Thus equation~(\ref{eq:eigenvalue}) implies that whenever $n\in I\subseteq \{2,3,\ldots,n\}$ we have $\lambda_{(n-m,1^m)}^I<\lambda_{(n-2,1^2)}^I$ for every $3\leq m\leq n-4$, and so the maximum of $\lambda_{(n-m,1^m)}^I$ for $1\leq m\leq n-1$  can only be attained by $m=1,2,n-3,n-2$ or $n-1$.
    
    (b) We have
	\begin{eqnarray}
	& & \lambda_{(n-2,1^2)}^I-\lambda_{(n-1,1)}^I \nonumber \\
	&=& \sum_{k\in I\setminus\{n\}} \binom{n}{k}(k-1)!\left(\frac{(n-k-1)(n-k-2)-2c_2}{(n-1)(n-2)}-\frac{n-k-1}{n-1}\right) + 2(n-3)!+(n-2)! \nonumber \\
	&\ge& -\sum_{2\leq k\leq n-2} \binom{n}{k}(k-1)! \frac{k(n-k-1)+2c_2}{(n-1)(n-2)}+2(n-3)!+(n-2)!\nonumber \\
	&=& n(n-3)!-\sum_{2\leq k\leq n-2} \frac{n(n-3)!}{(n-k)(n-k-2)!}-\frac{n}{n-2} \nonumber \\
	&=& n(n-3)!\left(1-\sum_{2\leq k\leq n-2} \frac{1}{(n-k)(n-k-2)!}\right) -\frac{n}{n-2} \nonumber\\
	&=& n(n-3)!\cdot \frac{1}{(n-2)!} -\frac{n}{n-2} \nonumber \\
	&=& 0. \nonumber
\end{eqnarray}
Thus $\lambda_{(n-2,1^2)}^I\ge\lambda_{(n-1,1)}^I$ and the equality holds if and only if $I=\{2,3,\ldots,n-2,n\}$ or $I=\{2,3,\ldots,n-1,n\}$. 

(c) We have $\lambda_{(n-1,1)}^I=\lambda_{(n-2,1^2)}^I$ from (b). We claim that the partition $(2,1^{n-2})$ yields a larger eigenvalue than $(n-2,1^2)$ and $(n-1,1)$ in this case. In fact, 
\begin{eqnarray} 
	& &\lambda_{(2,1^{n-2})}^I-\lambda_{(n-1,1)}^I \nonumber\\
	&=& \sum_{k=2}^{n-2} \binom{n}{k}(k-1)!(-1)^{k-1}\frac{n-k-1}{n-1}+(n-2)! \nonumber \\
	& & -\left(\sum_{k=2}^{n-2} \binom{n}{k}(k-1)!\frac{n-k-1}{n-1}-(n-2)!\right) \nonumber \\
	&=&-2 \sum_{\substack{2\leq k \leq n-2\\ k~\mathrm{is~even}}}\binom{n}{k}(k-1)!\frac{n-k-1}{n-1} +2(n-2)!\nonumber \\
	&=& 2(n-2)!\left(1-\sum_{\substack{2\leq k \leq n-2\\ k~\mathrm{is~even}}} \frac{n}{k(n-k)(n-k-2)!}\right) \nonumber \\
	&>& 0. \nonumber 
\end{eqnarray}
The last inequality above is deduced from the fact that $n$ is at least $8$ and 
\begin{eqnarray}
	\sum_{\substack{2\leq k \leq n-2\\ k~\mathrm{is~even}}} \frac{n}{k(n-k)(n-k-2)!}&=& \frac{n}{2(n-2)}+\frac{n}{8(n-4)}+\sum_{\substack{2\leq k \leq n-6\\ k~\mathrm{is~even}}} \frac{n}{k(n-k)(n-k-2)!} \nonumber \\
	&<& \frac{n}{2(n-2)}+\frac{n}{8(n-4)}+\sum_{\substack{2\leq k \leq n-6\\ k~\mathrm{is~even}}} \frac{1}{(n-k-2)!} \nonumber \\
	&<& \frac{8}{12}+\frac{8}{32}+\frac{2}{4!} \nonumber \\
	&=& 1. \nonumber
\end{eqnarray}

(d) Note from Table~\ref{tab:tab1} that $\tilde{\chi}_{(n-2,1^2)}((k,1^{n-k}))\ge 0$ for any $2\leq k\leq n$ and that
$\tilde{\chi}_{(n-2,1^2)}((k,1^{n-k}))= 0$ if and only if $k=n-1$ or $n-2$. Thus we have $\tilde{\chi}_{(n-2,1^2)}((k,1^{n-k}))\ge \tilde{\chi}_{(3,1^{n-3})}((k,1^{n-k}))=(-1)^{k-1}\tilde{\chi}_{(n-2,1^2)}((k,1^{n-k}))$ for any $2\leq k\leq n$. This implies that $\lambda_{(n-2,1^2)}^I\ge \lambda_{(3,1^{n-3})}^I$ and the equality holds if and only if $I$ contains only odd numbers other than $n-1$ and $n-2$.  
\end{proof}

\begin{lem}\label{lem:(n-m,2,1^m-2)_1}
Suppose $n\ge 7$. If $2\leq k\leq n-5$ and $\zeta=(n-m,2,1^{m-2})$ with $4\leq m\leq n-4$, then 
\begin{equation}\label{ineq: n-m,2,1}
	\tilde{\chi}_{\zeta}((k,1^{n-k}))< \tilde{\chi}_{(n-3,2,1)}((k,1^{n-k}))<\tilde{\chi}_{(n-2,2)}((k,1^{n-k})).
\end{equation}
\end{lem}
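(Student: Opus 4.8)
The plan is to prove the two inequalities in \eqref{ineq: n-m,2,1} separately, paralleling the structure of Lemma~\ref{lem:(n-m,1^m)}. Throughout I would write the cycle type $(k,1^{n-k})$ in the notation of Table~\ref{tab:tab1}, so that $c_1=n-k$ while $c_2$ and $c_3$ are nonzero only when $k=2$ and $k=3$ respectively. Table~\ref{tab:tab1} then yields the closed forms
\[
\tilde{\chi}_{(n-2,2)}((k,1^{n-k}))=\frac{(n-k)(n-k-3)+2c_2}{n(n-3)},\qquad \tilde{\chi}_{(n-3,2,1)}((k,1^{n-k}))=\frac{(n-k)(n-k-2)(n-k-4)-3c_3}{n(n-2)(n-4)}.
\]
For the right-hand inequality $\tilde{\chi}_{(n-3,2,1)}<\tilde{\chi}_{(n-2,2)}$ I would argue directly from these formulas. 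When $k\ge 4$ (so $c_2=c_3=0$) one cancels the common factor $(n-k)/n$ and is left with $g(n-k)<g(n)$, where $g(x)=(x-2)(x-4)/(x-3)$; since $g'(x)=\big((x-3)^2+1\big)/(x-3)^2>0$, the function $g$ is strictly increasing and $n-k<n$ closes it. The two remaining cases $k=2$ and $k=3$ each collapse to a single polynomial inequality (for $k=3$ the difference equals $3(n-4)(n-5)>0$), which I would verify by hand.

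For the left-hand inequality $\tilde{\chi}_{(n-m,2,1^{m-2})}<\tilde{\chi}_{(n-3,2,1)}$ I would induct on $n$, with $n=8$ and $n=9$ as base cases checked directly. For the inductive step ($n\ge 10$) I apply the Branching Rule to $\tilde{\chi}_{(n-m,2,1^{m-2})}((k,1^{n-k}))$ by removing one of the $n-k\ge 5$ fixed points, writing it as the mediant $\big(\sum_{\zeta-\Box}\chi_{\zeta-\Box}((k,1^{n-1-k}))\big)/\big(\sum_{\zeta-\Box}\chi_{\zeta-\Box}(\mathbf{1})\big)$, which is bounded above by $\max_{\zeta-\Box}\tilde{\chi}_{\zeta-\Box}((k,1^{n-1-k}))$. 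For $4\le m\le n-4$ the diagram $(n-m,2,1^{m-2})$ has exactly three removable corners, producing the two shapes $(n-m-1,2,1^{m-2})$ and $(n-m,2,1^{m-3})$ together with the hook $(n-m,1^{m-1})$.

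I would then bound these three normalized characters in $S_{n-1}$ against the two references $\tilde{\chi}_{(n-4,2,1)}$ and $\tilde{\chi}_{(n-4,1^3)}$. The two $(\cdot,2,1^{\cdot})$-type removals are at most $\tilde{\chi}_{(n-4,2,1)}((k,1^{n-1-k}))$: this is the induction hypothesis whenever the relevant parameter lies in $[4,n-5]$, and at the two boundary parameters (the reference $(n-4,2,1)$ itself, and $(3,2,1^{n-6})$ whose conjugate is $(n-4,2,1)$) it follows from $\tilde{\chi}_{(n-4,2,1)}\ge 0$ together with Lemma~\ref{lem:conjugate_character}. The hook $(n-m,1^{m-1})$ is at most $\tilde{\chi}_{(n-4,1^3)}((k,1^{n-1-k}))$ by Lemma~\ref{lem:(n-m,1^m)}(b), again invoking Lemma~\ref{lem:conjugate_character} for the extreme leg $(4,1^{n-5})$. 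It then remains to establish the two cross-comparisons $\tilde{\chi}_{(n-4,2,1)}((k,1^{n-1-k}))<\tilde{\chi}_{(n-3,2,1)}((k,1^{n-k}))$ and $\tilde{\chi}_{(n-4,1^3)}((k,1^{n-1-k}))<\tilde{\chi}_{(n-3,2,1)}((k,1^{n-k}))$; substituting the closed forms turns each into a polynomial inequality in $n$ (with the usual $k=2,3$ split), the first reducing for $c_3=0$ to monotonicity of $f(x)=(x-1)(x-3)(x-5)/\big(x(x-2)(x-4)\big)$ at $n-k<n$. Combining the bounds gives $\max_{\zeta-\Box}\tilde{\chi}_{\zeta-\Box}<\tilde{\chi}_{(n-3,2,1)}((k,1^{n-k}))$, closing the step.

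The crux is the hook $(n-m,1^{m-1})$ created by deleting the box in the second row. In contrast with Lemma~\ref{lem:(n-m,1^m)}, where the branching stays within hooks and the induction hypothesis alone suffices, here the emerging hook can have normalized character strictly larger than the naive $S_{n-1}$ target $\tilde{\chi}_{(n-4,2,1)}$ (already for $n=10$, $k=3$), so it cannot be absorbed by the induction hypothesis and must instead be controlled through the deeper reference $\tilde{\chi}_{(n-4,1^3)}$ and the extra cross-comparison against $\tilde{\chi}_{(n-3,2,1)}$. This last inequality is the delicate one: for $k=3$ its polynomial difference equals $9(n-4)(n-9)$, which vanishes at $n=9$, and this degeneracy is precisely why $n=9$ has to be disposed of as a base case rather than reached inductively.
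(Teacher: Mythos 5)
Your overall strategy coincides with the paper's: the right-hand inequality by direct comparison of the closed forms from Table~\ref{tab:tab1}, and the left-hand one by induction on $n$ with base cases $n=8,9$, branching at a fixed point into the three shapes $(n-m-1,2,1^{m-2})$, $(n-m,2,1^{m-3})$ and the hook $(n-m,1^{m-1})$, bounding the first two by $\tilde{\chi}_{(n-4,2,1)}((k,1^{n-1-k}))$ and the hook by $\tilde{\chi}_{(n-4,1^3)}((k,1^{n-1-k}))$ via Lemma~\ref{lem:(n-m,1^m)}, and finishing with the two cross-comparisons against $\tilde{\chi}_{(n-3,2,1)}((k,1^{n-k}))$. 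Your auxiliary computations check out: the difference $3(n-4)(n-5)$ at $k=3$ for the right-hand inequality, the difference $9(n-4)(n-9)$ forcing $n=9$ into the base case, and the observation that the hook's bound $\tilde{\chi}_{(n-4,1^3)}$ can exceed $\tilde{\chi}_{(n-4,2,1)}$ (so both references are genuinely needed, exactly as in the paper's $\max\{\cdot,\cdot\}$).

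There is, however, one genuine gap: the endpoint $k=n-5$ of the cycle-length range. You branch for every $k$ with $n-k\ge 5$, i.e.\ for all $2\le k\le n-5$, and then invoke the induction hypothesis in $S_{n-1}$ for the two $(\cdot,2,1^{\cdot})$-shaped removals. But the statement you are inducting on, read in $S_{n-1}$, only covers cycles of length at most $(n-1)-5=n-6$; at $k=n-5$ it says nothing about $\tilde{\chi}_{((n-1)-m',2,1^{m'-2})}((n-5,1^{4}))$. Worse, the target $\tilde{\chi}_{(n-4,2,1)}((n-5,1^{4}))$ equals $0$ there (its numerator contains the factor $(n-1)-(n-5)-4=0$), so the required bound is not a soft consequence of anything you have established; it is true (e.g.\ $\chi_{(n-5,2,1^2)}((n-5,1^4))=-1$), but it needs its own argument. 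The paper disposes of $k=n-5$ \emph{before} branching: by the Murnaghan--Nakayama Rule, $(n-m,2,1^{m-2})$ contains no $(n-5)$-border strip unless $m\in\{5,n-5\}$, so the character vanishes for all other $m$ in the range, and the two exceptional values of $m$ are checked by direct computation. You need to insert this (or an equivalent) separate case and restrict the branching step to $2\le k\le n-6$, after which your argument goes through.
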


\begin{proof}
By Table~\ref{tab:tab1}, we have 
$$
    	\tilde{\chi}_{(n-3,2,1)}((k,1^{n-k}))=\frac{(n-k)(n-k-2)(n-k-4)-3c_3}{n(n-2)(n-4)},
$$
$$
    	\tilde{\chi}_{(n-2,2)}((k,1^{n-k}))=\frac{(n-k)(n-k-3)+2c_2}{n(n-3)},
$$
where $c_i$ is the number of terms in $(k,1^{n-k})$ which are equal to $i$. Using these expressions and Lemma~\ref{lem:conjugate_character}, one can easily verify that 
$$
|\tilde{\chi}_{(3,2,1^{n-5})}((k,1^{n-k}))|=\tilde{\chi}_{(n-3,2,1)}((k,1^{n-k}))<\tilde{\chi}_{(n-2,2)}((k,1^{n-k}))
$$ 
for $n\ge 7$ and $2\leq k\leq n-5$.  Note that the first inequality in \eqref{ineq: n-m,2,1} vanishes when $n = 7$ as we require $4 \le m \le n-4$ in $\zeta$.  
In the following we prove by induction on $n\ge 8$ that for $2\leq k\leq n-5$, 
$$
\tilde{\chi}_{(n-m,2,1^{m-2})}((k,1^{n-k}))<\tilde{\chi}_{(n-3,2,1)}((k,1^{n-k}))
$$ 
holds for $4\leq m\leq n-4$. One can check that this holds for $n=8$ and $9$. Suppose the above inequality holds for some $n-1\ge 9$, that is, for $2\leq k\leq n-6$,
     \begin{eqnarray}
     \label{eq:(n-m,2,1^m-2)_assumption}
     	\tilde{\chi}_{(n-1-m,2,1^{m-2})}((k,1^{n-1-k}))<\tilde{\chi}_{(n-4,2,1)}((k,1^{n-1-k}))=|\tilde{\chi}_{(3,2,1^{n-6})}((k,1^{n-1-k}))|
     \end{eqnarray} 
holds for $4\leq m\leq n-5$. 

Now we consider $\tilde{\chi}_{(n-m,2,1^{m-2})}((k,1^{n-k}))$ with $4\leq m\leq n-4$ and $2\leq k\leq n-5$. Note that $\tilde{\chi}_{(n-3,2,1)}((k,1^{n-k}))>0$ for $n\ge 8$ and $2\leq k\leq n-5$. First, let $k=n-5$. By the Murnaghan-Nakayama Rule we see that if $m\ne 5, n-5$, then $\tilde{\chi}_{\zeta}((n-5,1^{5}))=0<\tilde{\chi}_{(n-3,2,1)}((n-5,1^{5}))$ as there is no $(n-5)$-border strip in $\zeta$. If $m=5$ or $n-5$, then by a simple computation we still have 
$$
0<|\tilde{\chi}_{(n-m,2,1^{m-2})}((n-5,1^{5}))|<\tilde{\chi}_{(n-3,2,1)}((n-5,1^{5})).
$$ 
Thus, for $4\leq m\leq n-4$,
	 \begin{eqnarray*}
	 	\tilde{\chi}_{(n-m,2,1^{m-2})}((n-5,1^{5}))<\tilde{\chi}_{(n-3,2,1)}((n-5,1^{5})).
	 \end{eqnarray*}  
Next, let $2\leq k\leq n-6$. For every $\zeta=(n-m,2,1^{m-2})$ with $4\leq m\leq n-4$, we have
      \begin{eqnarray}
		& &\tilde{\chi}_{\zeta}((k,1^{n-k}))\nonumber \\ \nonumber
		&=&\frac{\sum\limits_{\zeta-\Box} \chi_{\zeta-\Box}((k,1^{n-1-k}))}{\sum\limits_{\zeta-\Box} \chi_{\zeta-\Box}((1^{n-1}))} \\ \nonumber
	    &=& \frac{ \chi_{(n-m-1,2,1^{m-2})}((k,1^{n-1-k})) +  \chi_{(n-m,2,1^{m-3})}((k,1^{n-1-k})) +  \chi_{(n-m,1^{m-1})}((k,1^{n-1-k}))}{\chi_{(n-m-1,2,1^{m-2})}((1^{n-1})) +  \chi_{(n-m,2,1^{m-3})}((1^{n-1})) + \chi_{(n-m,1^{m-1})}((1^{n-1}))} \\ \nonumber
	    &\leq & \max\left\{ \tilde{\chi}_{(n-m-1,2,1^{m-2})}((k,1^{n-1-k})),  \tilde{\chi}_{(n-m,2,1^{m-3})}((k,1^{n-1-k})),  \tilde{\chi}_{(n-m,1^{m-1})}((k,1^{n-1-k})\right\} \\ \label{ineq:induction_assumption}
	    &\leq & \max\left\{ \tilde{\chi}_{(n-4,2,1)}((k,1^{n-1-k})), \tilde{\chi}_{(n-4,1^3)}((k,1^{n-1-k}) \right\} \\ \nonumber
	    &=& \max\Bigg\{ \frac{(n-1-k)(n-1-k-2)(n-1-k-4)-3c_3}{(n-1)(n-3)(n-5)}, \\ \nonumber
	    & & \frac{(n-k-2)(n-k-3)(n-k-4)-6(n-k-2)c_2+6c_3}{(n-2)(n-3)(n-4)}\Bigg\} \\ \nonumber
	    &<& \frac{(n-k)(n-k-2)(n-k-4)-3c_3}{n(n-2)(n-4)} \\ \nonumber
	    &=& \tilde{\chi}_{(n-3,2,1)}((k,1^{n-k})),
	\end{eqnarray}
	where (\ref{ineq:induction_assumption}) is deduced from the induction hypothesis (\ref{eq:(n-m,2,1^m-2)_assumption}) as well as Lemma~\ref{lem:(n-m,1^m)}.
\end{proof}

\begin{lem}
\label{lem:(n-m,2,1^m-2)_2}
Suppose $n\ge 7$. Then for any $\zeta=(n-m,2,1^{m-2})$ with $4\leq m\leq n-4$ the following hold:
\begin{itemize}
\item[\rm (a)] $\tilde{\chi}_{\zeta}((n-1,1))< \tilde{\chi}_{(n-3,2,1)}((n-1,1))< |\tilde{\chi}_{(n-2,2)}((n-1,1))|$;
\item[\rm (b)] $\sum_{k\in I } \binom{n}{k}(k-1)! \tilde{\chi}_{\zeta}((k,1^{n-k}))<\sum_{k\in I } \binom{n}{k}(k-1)! \tilde{\chi}_{(n-3,2,1)}((k,1^{n-k}))$ for every $I$ with $n-1\in I\subseteq\{n-4,n-3,n-1\}$.
	  \end{itemize} 	 
\end{lem}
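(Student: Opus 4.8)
The plan is to base everything on the explicit dimension
\begin{equation*}
\dim\rho_{(n-m,2,1^{m-2})}=\frac{n!}{(n-1)\,m\,(n-m)\,(m-2)!\,(n-m-2)!},\qquad 2\le m\le n-2,
\end{equation*}
which I would read off from the Hook-Length Formula (Theorem~\ref{thm:Hook}): the hook lengths of $(n-m,2,1^{m-2})$ are $n-1$, $n-m$ and $1,2,\ldots,n-m-2$ along the first row, together with $m,1$ and $1,2,\ldots,m-2$ down the first column. Setting $m=3$ and $m=2$ recovers $\dim\rho_{(n-3,2,1)}=n(n-2)(n-4)/3$ and $\dim\rho_{(n-2,2)}=n(n-3)/2$ from Table~\ref{tab:tab1}. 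Since conjugation sends $(n-m,2,1^{m-2})$ to $(m,2,1^{n-m-2})$, Lemma~\ref{lem:conjugate_character} gives the symmetry $\dim\rho_{(n-m,2,1^{m-2})}=\dim\rho_{(m,2,1^{n-m-2})}$, which I will use repeatedly.

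For part (a), Lemma~\ref{lem:character_for_n-1_cycles} gives $\chi_{(n-m,2,1^{m-2})}((n-1,1))=(-1)^{m-1}$, so $\tilde{\chi}_\zeta((n-1,1))=(-1)^{m-1}/\dim\rho_\zeta$; in particular $\tilde{\chi}_{(n-3,2,1)}((n-1,1))=3/(n(n-2)(n-4))>0$ and $|\tilde{\chi}_{(n-2,2)}((n-1,1))|=2/(n(n-3))$. The right-hand inequality then reads $3/(n(n-2)(n-4))<2/(n(n-3))$, equivalent to $2n^2-15n+25>0$, which holds for $n\ge 7$. For the left-hand inequality I would split on the parity of $m$: when $m$ is even, $\tilde{\chi}_\zeta((n-1,1))<0<\tilde{\chi}_{(n-3,2,1)}((n-1,1))$ and there is nothing to prove; when $m$ is odd (hence $m\ge 5$) it reduces to $\dim\rho_\zeta>\dim\rho_{(n-3,2,1)}$. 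The ratio $\dim\rho_{(n-m-1,2,1^{m-1})}/\dim\rho_{(n-m,2,1^{m-2})}=m(n-m)(n-m-2)/((m+1)(m-1)(n-m-1))$ exceeds $1$ for $m\le n/2-1$, so the conjugation symmetry makes the dimension unimodal in $m$ with minimum over $\{4,\ldots,n-4\}$ equal to $\dim\rho_{(n-4,2,1^2)}=n(n-2)(n-3)(n-5)/8$; this exceeds $\dim\rho_{(n-3,2,1)}$ exactly when $3n^2-32n+77>0$, i.e. for $n\ge 8$ (the case $n=7$ being vacuous), which settles part (a).

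For part (b) I would write the claim as $\sum_{k\in I}D_k>0$ with $D_k=\binom{n}{k}(k-1)!\bigl(\tilde{\chi}_{(n-3,2,1)}((k,1^{n-k}))-\tilde{\chi}_\zeta((k,1^{n-k}))\bigr)$, and evaluate the three possible terms. Because $4\le m\le n-4$, the diagram $(n-m,2,1^{m-2})$ has no hook of length $n-3$, so the Murnaghan--Nakayama Rule (Theorem~\ref{thm:MN}) gives $\tilde{\chi}_\zeta((n-3,1^3))=0$, while Table~\ref{tab:tab1} gives $\tilde{\chi}_{(n-3,2,1)}((n-3,1^3))=-3/(n(n-2)(n-4))$; hence $D_{n-3}=-(n-1)(n-5)!/2<0$. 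Similarly $\tilde{\chi}_{(n-3,2,1)}((n-4,1^4))=0$, and $\zeta$ has a hook of length $n-4$ only for $m\in\{4,n-4\}$, so $D_{n-4}=0$ for $5\le m\le n-5$; for the two boundary values the single nonzero character $\chi_{(n-4,2,1^2)}((n-4,1^4))=-1$ together with Lemma~\ref{lem:conjugate_character} makes $D_{n-4}$ explicit (it is positive except when $m=n-4$ and $n$ is even). Finally part (a) yields $D_{n-1}=3(n-3)(n-5)!-n(n-2)!\,(-1)^{m-1}/\dim\rho_\zeta$, which is positive.

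The crux, and the main obstacle, is that the inequality in (b) is \emph{not} a term-by-term domination: the $(n-3)$-cycle term $D_{n-3}$ has the wrong sign, so I must show that the gain $D_{n-1}$ from the $(n-1)$-cycle outweighs it. I would prove $D_{n-1}+D_{n-3}>0$ by bounding $\dim\rho_\zeta$ below by the minimum $\dim\rho_{(n-4,2,1^2)}$ from part (a); in the worst case ($m$ odd) this reduces to the elementary inequality $5n^2-58n+149>0$, valid for $n\ge 8$, while for $m$ even the bound is immediate. This already disposes of $I=\{n-1\}$ (via $D_{n-1}>0$) and $I=\{n-3,n-1\}$. For the subsets containing $n-4$ I would note $D_{n-4}\ge 0$ except in the single case $m=n-4$ with $n$ even, where $|D_{n-4}|=(n-1)(n-6)!/3$ is dwarfed by $D_{n-1}+D_{n-3}\ge(5n-17)(n-5)!/2$; thus $\sum_{k\in I}D_k>0$ for every admissible $I$, the binding case being $I=\{n-4,n-3,n-1\}$. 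Since every admissible $I$ contains $n-1$ and the only negative contributions are $D_{n-3}$ and occasionally $D_{n-4}$, verifying positivity in this binding case covers all four subsets.
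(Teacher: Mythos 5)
Your proof is correct, and it rests on the same ingredients as the paper's own argument --- Lemma~\ref{lem:character_for_n-1_cycles} turning the $(n-1)$-cycle character into $(-1)^{m-1}/\dim\rho_\zeta$, the vanishing of $\chi_\zeta$ on $(n-3)$-cycles (and, for $5\le m\le n-5$, on $(n-4)$-cycles) via the Murnaghan--Nakayama Rule, and dimension comparisons from the Hook-Length Formula --- but it is organized more uniformly. For (a), the paper verifies $n=7,8,9$ separately and for $n\ge 10$ runs a chain of inequalities through $(n-5,2,1^3)$ and $(n-4,2,1^2)$; your closed form $\dim\rho_{(n-m,2,1^{m-2})}=\frac{n!}{(n-1)m(n-m)(m-2)!(n-m-2)!}$ together with the symmetry $m\leftrightarrow n-m$ and the monotonicity of the ratio gives the minimum $\dim\rho_{(n-4,2,1^2)}$ in one stroke and settles (a) via $2n^2-15n+25>0$ and $3n^2-32n+77>0$. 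For (b), the paper reduces $5\le m\le n-5$ to the single partition $(n-5,2,1^3)$ and then leaves $m=4,5,n-4$ to unstated ``straightforward computations''; you instead isolate the one genuinely negative term $D_{n-3}=-(n-1)(n-5)!/2$, show explicitly that $D_{n-1}$ compensates (reducing to $5n^2-58n+149>0$ in the worst, odd-$m$ case), and control $D_{n-4}$ separately, which makes both the boundary values of $m$ and the binding choice of $I$ transparent. One small inaccuracy: at $n=8$ (where $m=4=n-4$) the diagram $(4,2,1^2)$ has \emph{two} rim hooks of length $4$, so $\chi_{(4,2,1^2)}((4,1^4))=0$ rather than $-1$; since this only makes $D_{n-4}$ larger than your worst-case value $-(n-1)(n-6)!/3$, the final inequality is unaffected.
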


\begin{proof}
(a) We can obtain the following facts by using Lemma~\ref{lem:character_for_n-1_cycles} and the Hook-Length Formula directly: If $n=7$, then $\tilde{\chi}_{(n-3,2,1)}((n-1,1))<|\tilde{\chi}_{(n-2,2)}((n-1,1))|$;
if $n=8$ or $9$, then $|\tilde{\chi}_{(n-4,2,1^2)}((n-1,1))|< \tilde{\chi}_{(n-3,2,1)}((n-1,1))< |\tilde{\chi}_{(n-2,2)}((n-1,1))|$; if $n\ge 10$, then for any $5\leq m\leq n-5$,
	\begin{eqnarray}
		\tilde{\chi}_{(n-m,2,1^{m-2})}((n-1,1))&\leq& \tilde{\chi}_{(n-5,2,1^3)}((n-1,1)) \nonumber \\
		&<&|\tilde{\chi}_{(n-4,2,1^2)}((n-1,1))|\nonumber \\
		&<& \tilde{\chi}_{(n-3,2,1)}((n-1,1)) \nonumber \\
		&<& |\tilde{\chi}_{(n-2,2)}((n-1,1))|. \nonumber
	\end{eqnarray}

(b) One can check that the inequality holds for $n = 8, 9$. Now suppose $n\ge 10$. If $k=n-4$, then for $\zeta=(n-m,2,1^{m-2})$ with $3\leq m\leq n-3$, we have $\tilde{\chi}_{\zeta}((n-4,1^4))\ne 0$ if and only if $m=4$ or $n-4$. Using Table~\ref{tab:tab1}, we obtain that $$(-1)^{n-5}\tilde{\chi}_{(4,2,1^{n-6}))}((n-4,1^{4}))=\tilde{\chi}_{(n-4,2,1^2))}((n-4,1^{4}))< 0=\tilde{\chi}_{(n-3,2,1))}((n-4,1^{4})).$$ If $k=n-3$, then $\tilde{\chi}_\zeta((n-3,1^3))=0$ for $\zeta=(n-m,2,1^{m-2})$ with $4\leq m\leq n-4$ and $$(-1)^{n-4}\cdot\tilde{\chi}_{(3,2,1^{n-5})}((n-3,1^3))=\tilde{\chi}_{(n-3,2,1)}((n-3,1^3))=\frac{-3}{n(n-2)(n-4)}.$$ Combining these facts with the first part of this lemma, we have for every $\zeta=(n-m,2,1^{m-2})$ with $5\leq m\leq n-5$ and every $I$ with $n-1\in I\subseteq \{n-4,n-3,n-1\}$, 
$$
\sum_{k\in I } \binom{n}{k}(k-1)! \tilde{\chi}_{\zeta}((k,1^{n-k}))\leq\sum_{k\in I } \binom{n}{k}(k-1)! \tilde{\chi}_{(n-5,2,1^3)}((k,1^{n-k})).
$$
To complete the proof, it remains to establish the required inequality for $\zeta=(n-m,2,1^{m-2})$ with $m=4,5$ or $n-4$ and $n-1\in I\subseteq \{n-4,n-3,n-1\}$. This can be done by straightforward computations with the help of Lemma~\ref{lem:character_for_n-1_cycles} and Table~\ref{tab:tab1}. 
\end{proof}

\begin{re}
\label{re:(n-m,2,1^m-2)}
Note from Table~\ref{tab:tab4} that $\tilde{\chi}_{(n-m,2,1^{m-2})}((n-2,1^2))=0$ for $3\leq m\leq n-3$ and from Lemma~\ref{lem:character_for_n_cycles} that $\tilde{\chi}_{(n-m,2,1^{m-2})}((n))=0$ for $2\leq m\leq n-2$. Combining these with Lemmas~\ref{lem:(n-m,2,1^m-2)_1} and \ref{lem:(n-m,2,1^m-2)_2}, we obtain that for any $\zeta=(n-m,2,1^{m-2})$ with $4\leq m\leq n-4$ and every $I$ with $n-1\in I \subseteq \{2,3,\ldots,n-1,n\}$,
\begin{eqnarray}
	\lambda_\zeta^I&=& \sum_{k\in I} \binom{n}{k}(k-1)! \cdot \tilde{\chi}_\zeta((k,1^{n-k})) \nonumber \\
                   &<& \sum_{k\in I} \binom{n}{k}(k-1)! \cdot \tilde{\chi}_{(n-3,2,1)}((k,1^{n-k}))\nonumber \\
                   &=& \lambda_{(n-3,2,1)}^I. \nonumber
\end{eqnarray}
Thus the maximum of $\lambda^I_{(n-m,2,1^{m-2})}$ for $2\leq m\leq n-2$ can only be attained by $m = 2,3,n-3,n-2$.
\end{re}

Now we are ready to prove our main result in this section.

\begin{thm}\label{thm:n-1}
Suppose $n\ge 7$ and $\{n-1\}\subseteq I \subseteq \{2,3,\ldots,n-1\}$. Then the following statements hold:
\begin{itemize}
\item[\rm (a)] if $n$ is even and $I$ contains at least one even number, then the second largest eigenvalue of $\mathrm{Cay}(S_n,C(n,I))$ is attained uniquely by $(1^n)$, and moreover the multiplicity of this eigenvalue is $1$;  
\item[\rm (b)] if $n$ is even and $I$ contains only odd numbers, then the strictly second largest eigenvalue of $\mathrm{Cay}(S_n,C(n,I))$ can only be attained by either $(n-1,1)$ and $(2,1^{n-2})$ or $(n-3,2,1)$ and $(3,2,1^{n-5})$;  
\item[\rm (c)] if $n$ is odd, then the second largest eigenvalue of $\mathrm{Cay}(S_n,C(n,I))$ is attained by $(n-1,1)$, $(n-3,2,1)$, $(2^2,1^{n-4}))$ or $(2,1^{n-2})$. 
\end{itemize}
\end{thm}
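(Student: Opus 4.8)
The plan is to cut the set of all irreducible representations down to a short list of candidate partitions and then compare them, exploiting the conjugation relation of Lemma~\ref{lem:conjugate_character} together with the special behaviour of the $(n-1)$-cycle (where the standard representation contributes nothing). Concretely, I would first show that the strictly second largest eigenvalue can only be attained within
$$\mathcal{C}=\{(1^n),(n-1,1),(2,1^{n-2}),(n-2,2),(2^2,1^{n-4}),(n-3,2,1),(3,2,1^{n-5})\}.$$
Splitting $\lambda_\zeta^I$ into the contribution of cycles of length $k\le n-2$ plus the single $(n-1)$-cycle term $\binom{n}{n-1}(n-2)!\,\tilde{\chi}_\zeta((n-1,1))$, Lemma~\ref{lem:cycle_n-2} bounds the first part strictly by $\lambda_{(n-1,1)}^I$ for every $\zeta\notin\{(n),(1^n),(n-1,1),(2,1^{n-2})\}$, while Lemma~\ref{lem:character_for_n-1_cycles} makes the $(n-1)$-cycle term vanish unless $\zeta$ is $(n)$, $(1^n)$, or of the form $(n-m,2,1^{m-2})$. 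Thus every hook outside $\{(n),(1^n),(n-1,1),(2,1^{n-2})\}$, and every $\zeta$ that is neither a hook nor an $(n-m,2,1^{m-2})$, satisfies $\lambda_\zeta^I<\lambda_{(n-1,1)}^I$; and Remark~\ref{re:(n-m,2,1^m-2)} trims the two-row family $(n-m,2,1^{m-2})$ to the four remaining members of $\mathcal{C}$.

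For the comparisons inside $\mathcal{C}$ I would record that $\lambda_{\zeta'}^I-\lambda_{\zeta}^I=-2\sum_{k\in I,\,k\ \mathrm{even}}\binom{n}{k}(k-1)!\,\tilde{\chi}_\zeta((k,1^{n-k}))$, and, writing $a_k,b_k,c_k$ for the normalized characters of $(n-1,1),(n-2,2),(n-3,2,1)$ on a $k$-cycle, that the closed forms in Table~\ref{tab:tab1} yield $b_k<a_k$ for all $2\le k\le n-2$ (a quadratic comparison in $n-k$) together with $b_{n-1}<0=a_{n-1}$, whence $\lambda_{(n-2,2)}^I<\lambda_{(n-1,1)}^I$ for every admissible $I$. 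In part~(a) the graph is connected; Lemma~\ref{cor: gap_between_(1^n)_and_(n-1,1)} gives $\lambda_{(1^n)}^I>\lambda_{(n-1,1)}^I+\frac{n(n-5)}{3}(n-3)!$, whereas for each of $(n-2,2),(2^2,1^{n-4}),(n-3,2,1),(3,2,1^{n-5})$ the $k\le n-2$ part is below $\lambda_{(n-1,1)}^I$ and the $(n-1)$-cycle term is at most $\binom{n}{n-1}(n-2)!\,c_{n-1}=3(n-3)(n-5)!$; since $\frac{n(n-5)}{3}(n-3)!>3(n-3)(n-5)!$ for $n\ge8$, the sign representation strictly beats all of them, and together with $\lambda_{(n-1,1)}^I>\lambda_{(2,1^{n-2})}^I$ this makes $(1^n)$ the unique attainer, of multiplicity $\dim\rho_{(1^n)}^2=1$.

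In part~(b) every cycle in $C(n,I)$ is an even permutation, so conjugate partitions give equal eigenvalues and $(1^n)$ ties with $(n)$ at the degree; the competition reduces to the conjugate pairs of $(n-1,1),(n-2,2),(n-3,2,1)$, and $\lambda_{(n-2,2)}^I<\lambda_{(n-1,1)}^I$ removes the $(n-2,2)$-pair, leaving the two pairs claimed. In part~(c), $n-1$ is even and is the largest element of $I$, so Lemma~\ref{lem:diff_(1^n)_(n-1,1)_even} eliminates $(1^n)$ and $\lambda_{(n-2,2)}^I<\lambda_{(n-1,1)}^I$ eliminates $(n-2,2)$; finally $\lambda_{(n-3,2,1)}^I-\lambda_{(3,2,1^{n-5})}^I=2\sum_{k\in I,\,k\ \mathrm{even}}\binom{n}{k}(k-1)!\,c_k>0$, because among even $k$ the only negative value $c_{n-3}$ is outweighed by the positive $(n-1)$-cycle term $3(n-3)(n-5)!$, so $(3,2,1^{n-5})$ is eliminated and exactly the four listed partitions remain.

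I expect the main obstacle to be the candidate reduction itself: one must carefully separate the $(n-1)$-cycle, where the standard representation contributes nothing but the shapes $(n-m,2,1^{m-2})$ do, from the small-cycle regime governed by Lemma~\ref{lem:cycle_n-2}, and then invoke Remark~\ref{re:(n-m,2,1^m-2)} to cut down the two-row family. The remaining delicate point is the dominant-term estimate $\sum_{k\in I,\,k\ \mathrm{even}}\binom{n}{k}(k-1)!\,c_k>0$ in part~(c), where the positive $(n-1)$-cycle contribution must be shown to dominate the single negative term at $k=n-3$; this is a short but genuine computation rather than a routine one.
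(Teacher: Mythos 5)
Your proposal is correct and follows essentially the same route as the paper's proof: the reduction to the seven candidate partitions via Lemma~\ref{lem:cycle_n-2}, Lemma~\ref{lem:character_for_n-1_cycles} and Remark~\ref{re:(n-m,2,1^m-2)}, the comparison of the gap from Lemma~\ref{cor: gap_between_(1^n)_and_(n-1,1)} against the bound $3(n-3)(n-5)!$ on the $(n-1)$-cycle term in part (a), and the conjugate-pair bookkeeping in parts (b) and (c) all mirror the paper's argument. The only (harmless) deviation is in part (c), where you discard $(3,2,1^{n-5})$ by comparing it with its conjugate $(n-3,2,1)$ and checking that the positive $(n-1)$-cycle contribution $3(n-3)(n-5)!$ dominates the negative $k=n-3$ term of size $\frac{(n-1)(n-5)!}{2}$, whereas the paper discards it by comparing directly with $(n-1,1)$ using that its character on an $(n-1)$-cycle is negative for odd $n$; both computations check out.
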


\begin{proof}
(a) For any $\zeta\vdash n$, we have  
\begin{eqnarray*}
\lambda_\zeta^I&=& \sum_{k\in I} \binom{n}{k}(k-1)! \tilde{\chi}_\zeta((k,1^{n-k})) \\
&=& \sum_{\substack{k\in I\setminus\{n-1\}}} \binom{n}{k}(k-1)! \tilde{\chi}_\zeta((k,1^{n-k}))  + n(n-2)!\tilde{\chi}_\zeta((n-1,1)).
\end{eqnarray*}
According to Lemma~\ref{lem:character_for_n-1_cycles}, the second term above vanishes unless $\zeta = (n),(1^n)$ or $(n-m,2,1^{m-2})$ with $2\leq m\leq n-2$. This together with Lemma~\ref{lem:cycle_n-2} implies that, for $\zeta\ne (n),(1^n),(n-1,1),(2,1^{n-2})$, $(n-m,2,1^{m-2})$ with $2\leq m\leq n-2$, we have
\begin{eqnarray}
\lambda_\zeta^I&=&\sum_{\substack{k\in I\setminus\{n-1\}}} \binom{n}{k}(k-1)! \tilde{\chi}_\zeta((k,1^{n-k})) \nonumber\\
&<& \sum_{\substack{k\in I\setminus\{n-1\}}} \binom{n}{k}(k-1)! \tilde{\chi}_{(n-1,1)}((k,1^{n-k})) \nonumber\\
&=& \lambda_{(n-1,1)}^I. \label{ineq:thm_n-1-a}
\end{eqnarray}
Since $n$ is even and the largest number $n-1$ in $I$ is odd, Lemma~\ref{lem:diff_(1^n)_(n-1,1)_odd} implies that $\lambda_{(1^n)}^I>\lambda_{(n-1,1)}^I$. Moreover, as $I$ contains at least one even number smaller than $n-1$, we also have 
\begin{eqnarray}
	\lambda_{(n-1,1)}^I &=& \sum_{k\in I} \binom{n}{k}(k-1)!\cdot \frac{n-k-1}{n-1} \nonumber\\
                        &>& \sum_{k\in I} \binom{n}{k}(k-1)!\cdot (-1)^{k-1}\frac{n-k-1}{n-1} \nonumber\\
                        &=& \lambda_{(2,1^{n-2})}^I. \label{eq:25}
\end{eqnarray}
So the second largest eigenvalue of $\mathrm{Cay}(S_n,C(n,I))$ can only be attained by $(1^n)$ or $(n-m,2,1^{m-2})$ with $2\leq m\leq n-2$.

In the following we assume $\zeta=(n-m,2,1^{m-2})$ with $2\leq m\leq n-2$, and we aim to show that $\zeta$ does not give the second largest eigenvalue of $\mathrm{Cay}(S_n,C(n,I))$. In fact, by Lemma~\ref{lem:cycle_n-2},
\begin{eqnarray}
\lambda_\zeta^I &=& \sum_{\substack{k\in I\setminus\{n-1\}}} \binom{n}{k}(k-1)! \tilde{\chi}_\zeta((k,1^{n-k}))  + n(n-2)!\tilde{\chi}_\zeta((n-1,1))\nonumber \\
&<& \sum_{\substack{k\in I\setminus\{n-1\}}} \binom{n}{k}(k-1)! \tilde{\chi}_{(n-1,1)}((k,1^{n-k}))  + n(n-2)!\tilde{\chi}_\zeta((n-1,1)) \nonumber \\
&=& \lambda_{(n-1,1)}^I + n(n-2)!\tilde{\chi}_\zeta((n-1,1)). \nonumber
\end{eqnarray}
On the other hand, by Lemma~\ref{lem:character_for_n-1_cycles},
\begin{eqnarray*}
n(n-2)!\tilde{\chi}_\zeta((n-1,1))&=& n(n-2)! \frac{\chi_\zeta((n-1,1))}{\chi_\zeta((1^n))} \\
& = & n(n-2)! \frac{(-1)^{m-1}(n-1)(n-m)(n-m-2)!m(m-2)!}{n!} \\
& \leq & n(n-2)! \frac{3(n-1)(n-3)(n-5)!}{n!} \\
& = & 3(n-3)(n-5)!,  
\end{eqnarray*}
where the second last step follows from the fact that $n$ is even and thus the maximum 
$$
\max_{2\leq m\leq n-2} \frac{(-1)^{m-1}(n-1)(n-m)(n-m-2)!m(m-2)!}{n!}
$$ 
is achieved by $m \in \{3, n-3\}$. Combining these with Lemma~\ref{cor: gap_between_(1^n)_and_(n-1,1)}, we obtain
\begin{eqnarray*}
\lambda_{(1^n)}^I & > & \lambda_{(n-1,1)}^I + \frac{n(n-5)}{3}(n-3)! \\
& > & \lambda_{(n-1,1)}^I + 3(n-3)(n-5)! \\
& \ge & \lambda_{(n-1,1)}^I + n(n-2)!\tilde{\chi}_\zeta((n-1,1)) \\
& > & \lambda_{(n-m,2,1^{m-2})}^I
\end{eqnarray*} 
for $2\leq m\leq n-2$. Therefore, the second largest eigenvalue of $\mathrm{Cay}(S_n,C(n,I))$ is attained uniquely by $(1^n)$. Moreover, the multiplicity of this eigenvalue is equal to the square of the dimension of $\rho_{(1^n)}$, namely $1$.
 
(b) Since $I$ contains only odd numbers, by Lemma~\ref{lem:conjugate_character} we have $\lambda_\zeta^I=\lambda_{\zeta'}^I$ for any $\zeta \vdash n$. According to \eqref{ineq:thm_n-1-a}, the strictly second largest eigenvalue can only be attained by $(n-1,1),(2,1^{n-2})$ or $(n-m,2,1^{m-2})$ with $2\leq m\leq n-2$. By a direct computation using Table~\ref{tab:tab1}, one can verify that for $2\leq k\leq n-1$ and $m=2$ or $n-2$, we have $\tilde{\chi}_{(n-m,2,1^{m-2})}((k,1^{n-k}))<\tilde{\chi}_{(n-1,1)}((k,1^{n-k}))$, which implies $\lambda_{(n-m,2,1^{m-2})}^I<\lambda_{(n-1,1)}^I=\lambda_{(2,1^{n-2})}^I$ when $m = 2$ or $n-2$. On the other hand, by Remark~\ref{re:(n-m,2,1^m-2)}, we have $\lambda_{(n-m,2,1^{m-2})}^I<\lambda_{(n-3,2,1)}^I=\lambda_{(3,2,1^{n-5})}^I$ for $4\leq m\leq n-4$. The result follows from these inequalities. 
 
(c) Similarly to the proof of (a) above, for $\zeta\ne (n),(1^n),(n-1,1),(2,1^{n-2})$, $(n-m,2,1^{m-2})$ with $2\leq m\leq n-2$, we have $\lambda_\zeta^I < \lambda_{(n-1,1)}^I$. Lemma~\ref{lem:diff_(1^n)_(n-1,1)_even} implies $\lambda_{(n-1,1)}^I>\lambda_{(1^n)}^I$ when $n\ge 7$ is odd and $n-1$ is the largest number in $I$. Thus the second largest eigenvalue of $\mathrm{Cay}(S_n,C(n,I))$ can only be attained by $(n-1,1),(2,1^{n-2})$ or $(n-m,2,1^{m-2})$ with $2\leq m\leq n-2$.

According to Remark~\ref{re:(n-m,2,1^m-2)}, the maximum of $\lambda_{(n-m,2,1^{m-2})}^I$ for $2\leq m\leq n-2$ is attained by $m=2,3,n-3$ or $n-2$. Furthermore, when $m=2, n-3$, Lemmas~\ref{lem:character_for_n-1_cycles} and \ref{lem:cycle_n-2} imply $\tilde{\chi}_{(n-m,2,1^{m-2})}((n-1,1))<0=\tilde{\chi}_{(n-1,1)}((n-1,1))$ and $\tilde{\chi}_{(n-m,2,1^{m-2})}((k,1^{n-k}))<\tilde{\chi}_{(n-1,1)}((k,1^{n-k}))$ for $2\leq k\leq n-2$, respectively. Therefore, for $m=2, n-3$, we have $\lambda_{(n-m,2,1^{m-2})}^I<\lambda_{(n-1,1)}^I$.
Thus the second largest eigenvalue of $\mathrm{Cay}(S_n,C(n,I))$ can only be attained by $(n-1,1),(2,1^{n-2})$ or $(n-m,2,1^{m-2})$ with $m=3,n-2$, as desired.
\end{proof}

\begin{re} 
One can verify that the strictly second largest eigenvalues of $\mathrm{Cay}(S_8,C(8,7))$ and $\mathrm{Cay}(S_8,C(8,\{3,7\}))$ are attained by $(5,2,1)$ and $(3,2,1^3)$, while the strictly second largest eigenvalues of $\mathrm{Cay}(S_8,C(8,\{5,7\}))$ and $\mathrm{Cay}(S_8,C(8,\{3,5,7\}))$ are attained by $(7,1)$ and $(2,1^6)$. This shows that both cases in part (b) of Theorem \ref{thm:n-1} can occur. However, we do not know whether the four partitions in part (b) of Theorem \ref{thm:n-1} can achieve the strictly second largest eigenvalue simultaneously. 
\end{re}

Part (a) of Theorem \ref{thm:n-1} implies the following result.  

\begin{cor}\label{cor:aldous_n-1}
Suppose $n\ge 8$ is even and $\{n-1\}\subseteq I \subseteq \{2,3,\ldots,n-1\}$. If $I$ contains at least one even number, then $\mathrm{Cay}(S_n, C(n,I))$ does not possess the Aldous property. 
\end{cor}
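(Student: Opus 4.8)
The plan is to obtain this corollary as an immediate consequence of part~(a) of Theorem~\ref{thm:n-1}, so essentially all of the substantive work has already been carried out and what remains is a short logical argument.

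First, I would observe that the hypotheses of the corollary match exactly those of Theorem~\ref{thm:n-1}(a): we are given that $n\ge 8$ is even, that $\{n-1\}\subseteq I\subseteq\{2,3,\ldots,n-1\}$, and that $I$ contains at least one even number. Hence Theorem~\ref{thm:n-1}(a) applies directly and tells us that the second largest eigenvalue of $\mathrm{Cay}(S_n,C(n,I))$ is attained \emph{uniquely} by the sign representation $\rho_{(1^n)}$, with multiplicity $1$.

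Next, I would confirm that this second largest eigenvalue is in fact the \emph{strictly} second largest eigenvalue, i.e.\ that the graph is connected. Since $I$ contains some even $k$, the connection set $C(n,I)$ contains a $k$-cycle, which is an odd permutation (its sign is $(-1)^{k-1}=-1$). Thus $C(n,I)\not\subseteq A_n$, so $\langle C(n,I)\rangle=S_n$ and $\mathrm{Cay}(S_n,C(n,I))$ is connected; consequently the largest eigenvalue $|C(n,I)|$ has multiplicity $1$ and the second largest eigenvalue coincides with the strictly second largest eigenvalue.

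Finally, I would invoke the definition of the Aldous property: $\mathrm{Cay}(S_n,C(n,I))$ has the Aldous property precisely when its strictly second largest eigenvalue is attained by the standard representation $\rho_{(n-1,1)}$. However, we have just seen that this eigenvalue is attained uniquely by $\rho_{(1^n)}$, and since $(1^n)\neq(n-1,1)$ for all $n\ge 3$, the standard representation does not attain it. Therefore the Aldous property fails, as required. The only point requiring any thought beyond quoting Theorem~\ref{thm:n-1}(a) is the verification of connectedness in the second step, and even that is routine; there is no genuine obstacle here, since the entire spectral analysis underpinning the claim is contained in Theorem~\ref{thm:n-1}(a).
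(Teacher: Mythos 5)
Your proposal is correct and is essentially the paper's own argument: the paper derives this corollary directly from Theorem~\ref{thm:n-1}(a), exactly as you do. Your extra verification of connectedness (so that the second largest and strictly second largest eigenvalues coincide) is a sound and welcome detail that the paper leaves implicit.
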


More work is required to determine when $\mathrm{Cay}(S_n, C(n,I))$ has the Aldous property under the conditions of parts (b) and (c) of Theorem \ref{thm:n-1}.  

In \eqref{eq:25} we saw that $\lambda_{(n-1,1)}^I > \lambda_{(2,1^{n-2})}^I$ whenever $I\subseteq \{2,3\ldots,n-1\}$ contains at least one even number smaller than $n-1$. In general, by Lemma~\ref{lem:conjugate_character} and \eqref{eq:odd_eigenvalue_(n-1,1)}, for any $I\subseteq \{2,3\ldots,n-1\}$ we have
\begin{align}
	\lambda_{(2,1^{n-2})}^I&=\sum_{k\in I} \binom{n}{k}(k-1)! \tilde{\chi}_{(2,1^{n-2})}((k,1^{n-k})) \notag \\
                           &=\sum_{k\in I} \binom{n}{k}(k-1)! (-1)^{k-1}\tilde{\chi}_{(n-1,1)}((k,1^{n-k})) \notag \\
                           &=\sum_{k\in I} \binom{n}{k}(k-1)! (-1)^{k-1} \frac{n-k-1}{n-1}, \notag \\
                           &\leq \lambda_{(n-1,1)}^I. \notag 
\end{align} 
We conjecture that the second largest eigenvalue of $\mathrm{Cay}(S_n,C(n,I))$ in part (c) of Theorem \ref{thm:n-1} can only be attained by $(n-1,1)$, $(2^2,1^{n-4})$ or $(2,1^{n-2})$:

\begin{cx}
Suppose $n\ge 7$ is odd and $\{n-1\}\subseteq I\subseteq \{2,3,\ldots,n-1\}$. Then 
$$
\lambda_{(n-3,2,1)}^I<\max\left\{\lambda_{(n-1,1)}^I, \lambda_{(2^2, 1^{n-4})}^I\right\}.
$$ 
\end{cx}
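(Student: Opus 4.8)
The plan is to establish the stronger inequality
\[
\lambda_{(n-1,1)}^{I}+\lambda_{(2^2,1^{n-4})}^{I}>2\,\lambda_{(n-3,2,1)}^{I},
\]
which suffices because applying $\max\{a,b\}\ge\frac{a+b}{2}$ to $a=\lambda_{(n-1,1)}^{I}-\lambda_{(n-3,2,1)}^{I}$ and $b=\lambda_{(2^2,1^{n-4})}^{I}-\lambda_{(n-3,2,1)}^{I}$ then forces $\max\{\lambda_{(n-1,1)}^{I},\lambda_{(2^2,1^{n-4})}^{I}\}>\lambda_{(n-3,2,1)}^{I}$. (Together with Theorem~\ref{thm:n-1}(c) and the bound $\lambda_{(2,1^{n-2})}^{I}\le\lambda_{(n-1,1)}^{I}$ recorded above, this would pin the second largest eigenvalue down to $(n-1,1)$, $(2^2,1^{n-4})$ or $(2,1^{n-2})$.) Using \eqref{eq:eigenvalue}, Table~\ref{tab:tab1} and Lemma~\ref{lem:conjugate_character} (with $(2^2,1^{n-4})'=(n-2,2)$), I would rewrite the target as $\sum_{k\in I}\binom{n}{k}(k-1)!\,f(k)>0$, where
\[
f(k)=\tilde{\chi}_{(n-1,1)}((k,1^{n-k}))+(-1)^{k-1}\tilde{\chi}_{(n-2,2)}((k,1^{n-k}))-2\,\tilde{\chi}_{(n-3,2,1)}((k,1^{n-k}))
\]
is an explicit rational function of $n$ and $j:=n-k$.

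The decisive point is that the weight $\binom{n}{k}(k-1)!=\frac{n!}{k\,(n-k)!}$ is vastly larger when $k$ is close to $n$ than when $k$ is small. The term $k=n-1$, which belongs to $I$ by hypothesis, has weight $n(n-2)!$ and, by Lemma~\ref{lem:character_for_n-1_cycles} and the Hook-Length Formula, contributes the positive quantity
\[
n(n-2)!\,f(n-1)=2(n-2)(n-4)!-6(n-3)(n-5)!.
\]
First I would determine the sign of $f(k)$: writing $x=j/n$, the leading behaviour is $f\approx x(2x+1)(1-x)$ for odd $k$ and $f\approx x(1-2x)(x+1)$ for even $k$, suggesting that $f(k)\ge 0$ for every odd $k$ and for every even $k$ with $k>n/2$, whereas $f(k)<0$ is possible only for even $k<n/2$. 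Confirming this amounts to verifying explicit rational inequalities in $n$ and $j$, with the cases $k=2,3$ (which carry the corrections $c_2,c_3$) and the values of $k$ near $n$ treated separately. The crucial structural fact is therefore that $f(k)<0$ forces $k$ to be small, hence forces the weight $\frac{n!}{k(n-k)!}$ to be small.

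It then remains to show that this negative mass is swamped by the single term $k=n-1$, i.e.
\[
\sum_{\substack{k\in I\\ f(k)<0}}\binom{n}{k}(k-1)!\,|f(k)|<n(n-2)!\,f(n-1).
\]
Bounding $|f(k)|\le 2$ (normalized characters lie in $[-1,1]$) and using $(n-k)!\ge\lceil n/2\rceil!$ on the relevant range $k<n/2$, the left-hand side is at most $\frac{2\,n!}{\lceil n/2\rceil!}\,H_n$ with $H_n=O(\log n)$, and since $n(n-2)!\,f(n-1)\sim 2(n-3)!$ the ratio $\frac{n(n-1)(n-2)\,H_n}{\lceil n/2\rceil!}$ tends to $0$; the finitely many small odd values $n=7,9,11,13$ I would verify by direct computation. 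The main obstacle is exactly this estimate made uniform in $n$: near $k=n/2$ the weight $\frac{n!}{k(n-k)!}$ is largest while $|f(k)|$ is smallest, so the uniform bound $|f(k)|\le 2$ is wasteful there, and a clean argument valid for all odd $n\ge 7$ probably needs a sharper bound on $|f(k)|$ that decays as $k\to n/2$, paired with the super-geometric decay of $\frac{n!}{k(n-k)!}$ as $k$ decreases in steps of $2$. Pinning down the exact sign pattern of the cubic-in-$j$ expression $f(k)$ across these boundary regimes is the secondary difficulty.
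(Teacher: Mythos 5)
This statement is not proved in the paper: it appears as an open conjecture (the displayed \texttt{cx} environment at the end of Section~\ref{sec:n-1}), recorded precisely because the authors were unable to rule out $(n-3,2,1)$ in part (c) of Theorem~\ref{thm:n-1}. So there is no proof in the paper to compare your argument against, and your proposal should be judged as a free-standing attempt at an open problem.

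As such an attempt, your strategy is sensible and several of its anchor points check out: the averaging reduction to $\lambda_{(n-1,1)}^I+\lambda_{(2^2,1^{n-4})}^I>2\lambda_{(n-3,2,1)}^I$ is valid; the identity $(2^2,1^{n-4})'=(n-2,2)$ and the value $n(n-2)!\,f(n-1)=2(n-2)(n-4)!-6(n-3)(n-5)!=(n-5)!\,(2n^2-18n+34)>0$ are correct; and the weights $\binom{n}{k}(k-1)!=\frac{n!}{k(n-k)!}$ do decay super-exponentially as $k$ moves below $n/2$, so a single positive term at $k=n-1$ of order $2(n-3)!$ can plausibly absorb all negative contributions for large $n$. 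But as written this is a plan, not a proof. The decisive step --- that $f(k)\ge 0$ for all odd $k$ and all even $k>n/2$ --- is argued only from the leading-order cubic in $x=(n-k)/n$; the lower-order corrections, the $c_2,c_3$ terms at $k=2,3$, and the regimes $k$ near $2$, near $n/2$ and near $n$ are exactly where such sign claims can fail, and you defer all of them. The bound $|f(k)|\le 2$ is asserted but not justified (the triangle inequality only gives $4$ in general, or $3$ when $f(k)<0$), and with the correct constant your crude estimate fails for more initial values of $n$ than the four you list, so the ``finitely many small cases'' to be verified by computer are neither enumerated correctly nor actually computed. Until the exact sign pattern of $f$ and the uniform-in-$n$ comparison are carried out, the conjecture remains open; what you have is a credible roadmap, with the hard quantitative work still to do.
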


\section{$\mathrm{Cay}(S_n,C(n,I))$ with $I\cap\{n-1,n\}=\{n\}$} \label{sec:n}

\begin{lem}
\label{cor:lower_bound_1^n_n-1,1_odd}
Suppose $n\ge 7$ is odd and $\{n\}\subseteq I \subseteq\{2,3,\ldots,n-2,n\}$. Then 
$$
\lambda_{(1^n)}^I-\lambda_{(n-1,1)}^I>\frac{n}{2}(n-2)!.
$$ 
\end{lem}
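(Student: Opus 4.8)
The plan is to mimic the proof of Lemma~\ref{cor: gap_between_(1^n)_and_(n-1,1)}: isolate the contribution of the $n$-cycles, which is large and positive, and show that the other cycle lengths cannot erode more than half of it. First, by \eqref{eq: odd_eigenvalue_(1^n)} and \eqref{eq:odd_eigenvalue_(n-1,1)} I would write
$$\lambda_{(1^n)}^I-\lambda_{(n-1,1)}^I=\sum_{k\in I}\binom{n}{k}(k-1)!\left((-1)^{k-1}-\frac{n-k-1}{n-1}\right).$$
Since $n\in I$ and $n$ is odd, the $k=n$ summand is $(n-1)!\big(1+\tfrac{1}{n-1}\big)=n(n-2)!$. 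For $k\in I\setminus\{n\}\subseteq\{2,\ldots,n-2\}$ the summand equals $\frac{k}{n-1}\binom{n}{k}(k-1)!>0$ when $k$ is odd and $-\frac{2n-k-2}{n-1}\binom{n}{k}(k-1)!<0$ when $k$ is even, so discarding the positive odd terms and extending the subtracted negative even terms to all even indices in the admissible range yields a valid lower bound.

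The one subtlety separating this from Lemma~\ref{lem:diff_(1^n)_(n-1,1)_odd} is the range of even indices. Because $I\subseteq\{2,3,\ldots,n-2,n\}$ we have $n-1\notin I$, and since $n$ is odd the largest even integer in $\{2,\ldots,n-2\}$ is $n-3$; hence the even indices to be subtracted run only over $\{2,4,\ldots,n-3\}$. It is essential not to include $k=n-1$ here, for its normalized contribution $\frac{2n-k-2}{k(n-k)!}$ equals $1$ and would annihilate the whole $n(n-2)!$ gained from the $n$-cycles. Using $\binom{n}{k}(k-1)!\frac{2n-k-2}{n-1}=\frac{n(n-2)!(2n-k-2)}{k(n-k)!}$, the lemma reduces to the numerical inequality
$$\sum_{\substack{2\le k\le n-3\\ k\ \mathrm{even}}}\frac{2n-k-2}{k(n-k)!}<\frac12,$$
because it then gives $\lambda_{(1^n)}^I-\lambda_{(n-1,1)}^I>n(n-2)!\big(1-\tfrac12\big)=\tfrac n2(n-2)!$.

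To establish this inequality I would reindex by $j=n-k$, which ranges over the odd integers $3\le j\le n-2$ and recasts the sum as $\sum_j\frac{n+j-2}{(n-j)\,j!}$. The elementary equivalence $\frac{n+j-2}{n-j}\le j\Leftrightarrow(j-1)(n-j-2)\ge0$ holds throughout the range, bounding each term by $\frac{1}{(j-1)!}$; but this crude estimate alone gives $\sum_{j\ \mathrm{odd}\ge3}\frac1{(j-1)!}=\cosh(1)-1\approx0.543>\tfrac12$, so it is not enough. The hard part---indeed the only real work---is therefore to treat the dominant term $j=3$ (that is, $k=n-3$) separately: its exact value $\frac{n+1}{6(n-3)}$ is decreasing in $n$ and so at most $\frac13$ for $n\ge7$, while the remaining tail is bounded by $\sum_{j\ \mathrm{odd}\ge5}\frac1{(j-1)!}=\cosh(1)-\tfrac32<\tfrac1{20}$. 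Together these give a total below $\frac13+\frac1{20}<\frac12$, with the boundary case $n=7$ (where only $j\in\{3,5\}$ arise, summing to $\frac13+\frac1{24}=\frac38$) confirming the estimate is safely satisfied.
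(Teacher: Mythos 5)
Your proposal is correct, and its first half—isolating the $k=n$ summand $n(n-2)!$, discarding the positive odd-$k$ terms, extending the negative even-$k$ terms over all even $k$ in $\{2,\ldots,n-3\}$, and thereby reducing the lemma to $\sum_{2\le k\le n-3,\ k\ \mathrm{even}}\frac{2n-k-2}{k(n-k)!}<\frac12$—is exactly the reduction the paper performs. Where you diverge is in how this final numerical inequality is verified. The paper does not prove it from scratch: it writes $1=\frac12+\frac1{(n-k_0)!}$ with $k_0=n-2$ (odd since $n$ is odd) and observes that $\frac1{(n-k_0)!}-\sum_{2\le k\le k_0-1,\ k\ \mathrm{even}}\frac{2n-k-2}{k(n-k)!}>0$ is precisely the positivity of the right-hand side of \eqref{ineq:diff_1^n_n-1,1_odd}, already established by the telescoping decomposition in the proof of Lemma~\ref{lem:diff_(1^n)_(n-1,1)_odd}. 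You instead give a self-contained direct estimate: reindexing by $j=n-k$, bounding each term by $\frac1{(j-1)!}$ via $(j-1)(n-j-2)\ge0$, correctly noticing that the resulting $\cosh(1)-1>\frac12$ is not quite good enough, and then handling the dominant $j=3$ term exactly (its value $\frac{n+1}{6(n-3)}\le\frac13$ for $n\ge7$) so that $\frac13+(\cosh(1)-\frac32)<\frac12$. Your route is more elementary and spares the reader from re-examining the earlier telescoping argument with a new value of $k_0$; the paper's route is shorter on the page because it recycles machinery already in place. Both are valid, and both correctly exclude $k=n-1$ from the even-index sum, which, as you note, is the point where the hypothesis $n-1\notin I$ is essential.
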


\begin{proof}
Similarly to the proof of Lemma~\ref{lem:diff_(1^n)_(n-1,1)_odd}, we have
  \begin{eqnarray} 
  \lambda_{(1^n)}^I-\lambda_{(n-1,1)}^I&=&n(n-2)!+\sum_{k\in I\setminus\{n\}}\binom{n}{k}(k-1)!\left((-1)^{k-1}-\frac{n-k-1}{n-1}\right)\nonumber\\
   &\ge& n(n-2)!-\sum\limits_{\substack{2\leq k\leq n-3\\ k\text{~is~even}}}\binom{n}{k}(k-1)!\frac{2n-k-2}{n-1}\nonumber\\
   &=& n(n-2)!\left(1-\sum\limits_{\substack{2\leq k\leq n-3\\ k\text{~is~even}}} \frac{2n-k-2}{k(n-k)!}\right)\nonumber \\
   &=& n(n-2)!\left(\frac{1}{(n-5)!}-\sum\limits_{\substack{2\leq k\leq 5\\ k\text{~is~even}}} \frac{2n-k-2}{k(n-k)!}\right) \nonumber\\
   & & + n(n-2)!\left(\sum\limits_{\substack{6\leq k\leq n-3\\ k\text{~is~even}}}\frac{1}{(n-k-1)!}-\frac{1}{(n-k+1)!}-\frac{2n-k-2}{k(n-k)!}\right)  \nonumber \\
   & & +n(n-2)!\left(1-\frac{1}{2}\right) \nonumber \\
   &>& \frac{n}{2}(n-2)!, \nonumber
\end{eqnarray}
where the last step follows from the fact that the right-hand side of (\ref{ineq:diff_1^n_n-1,1_odd}) is positive when taking $k_0=n-2$.
\end{proof}

\begin{lem}\label{cor:n_even_1^n_n-1,1}
Suppose $n\ge 8$ is even and $\{n\}\subseteq I\subseteq\{2,3,\ldots,n-2,n\}$. Then $\lambda_{(n-1,1)}^I>\lambda_{(1^n)}^I$.
\end{lem}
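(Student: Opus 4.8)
The plan is to argue directly from the explicit formulas \eqref{eq: odd_eigenvalue_(1^n)} and \eqref{eq:odd_eigenvalue_(n-1,1)}, isolating the contribution of the $n$-cycle, which is precisely the term whose sign flips relative to the odd-$n$ situation treated in Lemma~\ref{cor:lower_bound_1^n_n-1,1_odd}. First I would record that, since $n$ is even, the summand $k=n$ contributes $(n-1)!\cdot\frac{-1}{n-1}=-(n-2)!$ to $\lambda_{(n-1,1)}^I$ and $(n-1)!\cdot(-1)^{n-1}=-(n-1)!$ to $\lambda_{(1^n)}^I$. Hence the $n$-cycle alone contributes $(n-1)!-(n-2)!=(n-2)(n-2)!$ to the difference $\lambda_{(n-1,1)}^I-\lambda_{(1^n)}^I$. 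This positive surplus is what must be protected against the remaining terms $k\in I\setminus\{n\}$.

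Next I would write $\lambda_{(n-1,1)}^I-\lambda_{(1^n)}^I=(n-2)(n-2)!+\sum_{k\in I\setminus\{n\}}\binom{n}{k}(k-1)!\bigl(\frac{n-k-1}{n-1}+(-1)^k\bigr)$, and observe that the bracketed factor equals $\frac{2n-k-2}{n-1}>0$ for even $k$ and $-\frac{k}{n-1}<0$ for odd $k$. Discarding the (helpful) even-$k$ terms and extending the (harmful) odd-$k$ sum to all odd $k$ with $3\le k\le n-3$, which is the worst case $I=\{3,5,\ldots,n-3,n\}$, yields the lower bound $(n-2)(n-2)!-\sum_{\substack{3\le k\le n-3\\ k\text{ odd}}}\binom{n}{k}(k-1)!\frac{k}{n-1}$. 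I would then simplify $\binom{n}{k}(k-1)!\frac{k}{n-1}=\frac{n(n-2)!}{(n-k)!}$ and substitute $j=n-k$, so that the bound becomes $(n-2)!\bigl((n-2)-n\sum_{\substack{3\le j\le n-3\\ j\text{ odd}}}\frac{1}{j!}\bigr)$.

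The proof then reduces to the single estimate $\sum_{\substack{j\ge 3\\ j\text{ odd}}}\frac{1}{j!}<\frac{n-2}{n}$. The left-hand side equals $\sinh 1-1$, which is below $\tfrac15$, while the right-hand side is at least $\tfrac34$ for $n\ge 8$; in fact the crude truncation $\frac16+\frac1{120}+\cdots<\frac15$ suffices and avoids any appeal to $\sinh$. Substituting this back shows the parenthetical, hence the whole difference, is strictly positive. The only delicate point is exactly this uniform control of the odd-$k$ terms: one must verify that enlarging the harmful sum to all of $3,5,\ldots,n-3$ still leaves the $n$-cycle surplus $(n-2)(n-2)!$ dominant. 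The factorial decay of $1/j!$ makes the harmful sum a small fraction of the surplus, so the estimate holds comfortably for every even $n\ge 8$ with no further case analysis; this is what lets us bypass splitting on the parity of $\max\,(I\setminus\{n\})$ via Lemmas~\ref{lem:diff_(1^n)_(n-1,1)_odd} and \ref{lem:diff_(1^n)_(n-1,1)_even}.
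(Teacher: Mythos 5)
Your proposal is correct and follows the same reduction as the paper: both isolate the $k=n$ term, which contributes $(n-2)(n-2)!$ to the difference, discard the helpful even-$k$ terms, and bound the harmful odd-$k$ terms to arrive at the identical intermediate inequality $\lambda_{(n-1,1)}^I-\lambda_{(1^n)}^I\ge n(n-2)!\bigl(\tfrac{n-2}{n}-\sum_{k\,\mathrm{odd}}\tfrac{1}{(n-k)!}\bigr)$. The only divergence is in the closing step: the paper finishes by reusing the telescoping term-by-term decomposition from Lemma~\ref{lem:diff_(1^n)_(n-1,1)_even}, whereas you observe directly that $\sum_{j\ge 3,\,j\,\mathrm{odd}}1/j!=\sinh 1-1<\tfrac15<\tfrac34\le\tfrac{n-2}{n}$, which is simpler and equally rigorous.
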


\begin{proof}
Similarly to the proof of Lemma~\ref{lem:diff_(1^n)_(n-1,1)_even}, we have
  \begin{align}
  & \lambda_{(n-1,1)}^I-\lambda_{(1^n)}^I \nonumber \\
  &\ge   n(n-2)!\left(\frac{n-2}{n}-\sum\limits_{\substack{2\leq k\leq n-3\\k~\text{is~odd}}}\frac{1}{(n-k)!} \right) \nonumber \\
  &= n(n-2)!\left(\frac{2n-6}{4(n-4)!}-\sum\limits_{\substack{2\leq k\leq 4\\k~\text{is~odd}}}\frac{1}{(n-k)!} \right) \nonumber\\
  &  + n(n-2)!\left( \sum\limits_{\substack{5\leq k\leq n-3 \\k~\text{is~odd} }} \frac{2n-k-3}{(k+1)(n-k-1)!}-\frac{2n-k-1}{(k-1)(n-k+1)!}-\frac{1}{(n-k)!}\right) \nonumber \\
  & + n(n-2)!\left(\frac{n-2}{n}-\frac{n}{2(n-2)}\right) \nonumber \\
  &> 0. \nonumber \qedhere
 \end{align}
\end{proof}

\begin{lem}\label{lem:n-2,1,1}
Suppose $n\ge 9$ and $\{n\}\subseteq I \subseteq \{2,3,\ldots,n-2, n\}$.  Then for any $\zeta\ne (2^2,1^{n-4})$, $(n-m,1^m)$ with $0\leq m\leq n-1$, we have $\lambda_\zeta^I<\lambda_{(n-2,1^2)}^I$. 
\end{lem}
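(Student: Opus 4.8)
The plan is to exploit the fact that $(n-2,1^2)$ is a hook while every $\zeta$ under consideration is a non-hook, so that the $n$-cycle, which lies in $I$, contributes a large positive amount to $\lambda_{(n-2,1^2)}^I$ but nothing to $\lambda_\zeta^I$. Indeed, since $\zeta$ is neither a hook nor $(2^2,1^{n-4})$, Lemma~\ref{lem:character_for_n_cycles} gives $\tilde{\chi}_\zeta((n))=0$, whereas $(n-2,1^2)=(n-m,1^m)$ with $m=2$ gives $\tilde{\chi}_{(n-2,1^2)}((n))=2/((n-1)(n-2))$; hence the $n$-cycle contributes $2(n-3)!$ to $\lambda_{(n-2,1^2)}^I$ and $0$ to $\lambda_\zeta^I$. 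Writing
\[
\lambda_{(n-2,1^2)}^I-\lambda_\zeta^I
=2(n-3)!+\sum_{k\in I\setminus\{n\}}\binom{n}{k}(k-1)!\,\bigl(\tilde{\chi}_{(n-2,1^2)}-\tilde{\chi}_\zeta\bigr)\bigl((k,1^{n-k})\bigr),
\]
the whole argument reduces to showing that the total negative contribution (the \emph{deficit}) of the sum is strictly smaller than the bonus $2(n-3)!$. Note already that this is where the exclusion of $(2^2,1^{n-4})$ must be used, since nothing else distinguishes it from the other non-hooks in the displayed identity.

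For the middle range $3\le k\le n-3$ I would invoke Lemma~\ref{lem:upper_bound}, which gives $\tilde{\chi}_\zeta((k,1^{n-k}))<(n-k)(n-k-1)/(n(n-1))$ for every non-hook $\zeta$ (all four exceptions there being hooks). Since $\tilde{\chi}_{(n-2,1^2)}((k,1^{n-k}))=(n-k-1)(n-k-2)/((n-1)(n-2))$ for $k\ge 3$, a direct simplification shows the per-$k$ deficit is at most $\binom{n}{k}(k-1)!\cdot 2k(n-k-1)/(n(n-1)(n-2))=2(n-3)!/((n-k)(n-k-2)!)$. Summing over $3\le k\le n-3$ and reindexing by $j=n-k$ bounds this part of the deficit by $2(n-3)!\sum_{j\ge 3}1/(j(j-2)!)<(n-3)!$, because $\sum_{j\ge 3}1/(j(j-2)!)<1/2$. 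The transposition term $k=2$ is treated by the same upper bound together with a short computation (accounting for the extra $c_2$ in the formula for $\tilde{\chi}_{(n-2,1^2)}$), which shows its deficit is at most $3$.

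The delicate endpoint, and the main obstacle, is $k=n-2$, where Lemma~\ref{lem:upper_bound} no longer applies and must be replaced by Table~\ref{tab:tab4}. Reading off that table, among non-hooks the only partitions whose normalized character on an $(n-2)$-cycle has magnitude of order $1/(n(n-3))$ are $(n-2,2)$ and $(2^2,1^{n-4})$: the latter is precisely the excluded partition, and for the former $\tilde{\chi}_{(n-2,2)}((n-2,1^2))=-2/(n(n-3))<0$ actually enlarges the gap. Every remaining non-hook has $|\tilde{\chi}_\zeta((n-2,1^2))|\le 6/(n(n-1)(n-5))$, giving a deficit at $k=n-2$ of at most $3(n-3)!/(n-5)$. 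This is exactly the place where the hypothesis $\zeta\ne(2^2,1^{n-4})$ is indispensable: for $\zeta=(2^2,1^{n-4})$ (when $n$ is even) the deficit at $k=n-2$ equals $(n-1)(n-4)!>(n-3)!$, which the bonus cannot absorb. Adding the three contributions, the total deficit is at most $(n-3)!+3+3(n-3)!/(n-5)<2(n-3)!$ for $n\ge 9$, whence $\lambda_{(n-2,1^2)}^I-\lambda_\zeta^I>0$, as required. The only genuinely nonroutine points are the clean closed form of the middle-range deficit and the case analysis at $k=n-2$; the remaining estimates are straightforward bookkeeping.
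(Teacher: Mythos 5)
Your proof is correct and follows essentially the same route as the paper: both exploit the $2(n-3)!$ contribution of the $n$-cycle to $\lambda_{(n-2,1^2)}^I$, bound $\tilde{\chi}_\zeta$ via Lemma~\ref{lem:upper_bound} for $2\le k\le n-3$ and via Table~\ref{tab:tab4} for $k=n-2$, and evaluate the same telescoping sum $\sum 1/(j(j-2)!)$; your split into the ranges $k=2$, $3\le k\le n-3$, $k=n-2$ reproduces exactly the paper's terms $3$, $(n-3)!$ and $3(n-3)!/(n-5)$. The only quibble is cosmetic: the infinite sum $\sum_{j\ge3}1/(j(j-2)!)$ equals $1/2$ rather than being strictly less, but the finite sum you actually need is strictly below $1/2$, so the conclusion stands.
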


\begin{proof}
Suppose $\zeta$ is any partition of $n$ other than $(2^2,1^{n-4})$ and $ (n-m,1^m)$ with $0\leq m\leq n-1$. By Lemma~\ref{lem:upper_bound}, we have $\tilde{\chi}_{\zeta}((k,1^{n-k}))<\frac{(n-k)(n-k-1)}{n(n-1)}$ for $2\leq k\leq n-3$. According to Table~\ref{tab:tab4}, we have $\tilde{\chi}_{\zeta}((n-2,1^2))\leq \frac{6}{n(n-1)(n-5)}$. Define 
$$
f(n,k)=\begin{cases}
    	\frac{(n-k)(n-k-1)}{n(n-1)}, & \text{if}~2\leq k\leq n-3;\\ 
    	\frac{6}{n(n-1)(n-5)}, & \text{if}~k=n-2.    
\end{cases}
$$ 
Note that $\tilde{\chi}_{\zeta}((n))=0$ by Lemma~\ref{lem:character_for_n_cycles}.
Therefore,
\begin{equation}\label{ineq:n-2,1,1_1}
\lambda_\zeta^I = \sum_{k\in I\setminus\{n\}} \binom{n}{k}(k-1)! \tilde{\chi}_\zeta((k,1^{n-k})) \leq \sum_{k\in I\setminus\{n\}} \binom{n}{k}(k-1)! f(n,k). \nonumber
\end{equation}
One can further verify that $\tilde{\chi}_{(n-2,1^2)}((k,1^{n-k}))< f(n,k)$ for $2\leq k\leq n-2$. Hence
	\begin{eqnarray*} 
		& & \lambda_{(n-2,1^2)}^I-\lambda_{\zeta}^I \nonumber \\
		&\ge& 2(n-3)!+ \sum_{k\in I\setminus\{n\}} \binom{n}{k}(k-1)! \left(\frac{(n-k-1)(n-k-2)-2c_2}{(n-1)(n-2)}-f(n,k) \right)\nonumber \\
    &\ge& 2(n-3)!+ \sum_{k=2}^{n-2} \binom{n}{k}(k-1)! \left(\frac{(n-k-1)(n-k-2)-2c_2}{(n-1)(n-2)}-f(n,k) \right)\nonumber \\
		&= & 2(n-3)!+ \sum_{k=2}^{n-3} \binom{n}{k}(k-1)! \left(\frac{(n-k-1)(n-k-2)-2c_2}{(n-1)(n-2)}- \frac{(n-k)(n-k-1)}{n(n-1)}\right)  \nonumber \\
		& & - \frac{3(n-3)!}{n-5} \nonumber \\
		&=& 2(n-3)!- \sum_{k=2}^{n-3} \frac{2(n-3)!}{(n-k)(n-k-2)!} -\frac{n}{n-2}- \frac{3(n-3)!}{n-5} \nonumber \\
		&=& 2(n-3)!\left(1-\sum_{k=2}^{n-3} \frac{1}{(n-k)(n-k-2)!} \right) -\frac{n}{n-2}- \frac{3(n-3)!}{n-5}\nonumber \\ 
		&=& 2(n-3)!\left(\frac{1}{2}+\frac{1}{(n-2)!}\right) -\frac{n}{n-2}- \frac{3(n-3)!}{n-5} \nonumber \\
		&>& 0.
	\end{eqnarray*}
This completes the proof.
\end{proof}

The following is the main result in this section.

\begin{thm}\label{thm:n}
Suppose $n\ge 7$ and $\{n\}\subseteq I \subseteq \{2,3,\ldots,n-2,n\}$. Then the following statements hold:
\begin{itemize}
\item[\rm (a)] if $n$ is odd and $I$ contains at least one even number, then the second largest eigenvalue of $\mathrm{Cay}(S_n,C(n,I))$ is attained uniquely by $(1^n)$, and moreover the multiplicity of this eigenvalue is $1$;
\item[\rm (b)] if $n$ is odd and $I$ only contains odd numbers, then the strictly second largest eigenvalue of $\mathrm{Cay}(S_n,C(n,I))$ is attained by $(n-2,1^2)$ and $(3,1^{n-3})$, and moreover the multiplicity of this eigenvalue is $\frac{(n-1)^2(n-2)^2}{2}$;
\item[\rm (c)] if $n$ is even, then the second largest eigenvalue of $\mathrm{Cay}(S_n,C(n,I))$ is attained by $(n-2,1^2)$ or $(2,1^{n-2})$. 
\end{itemize}
\end{thm}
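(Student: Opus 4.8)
The plan is to cut the list of candidate partitions down to a handful using the behaviour of $n$-cycles, and then finish by explicit comparisons that split along the parity of $n$ and the content of $I$. The key structural fact is Lemma~\ref{lem:character_for_n_cycles}: an $n$-cycle has nonzero normalized character only on the hooks $(n-m,1^m)$. Hence for every non-hook $\zeta$ the $n$-cycle term drops out of \eqref{eq:eigenvalue}, so $\lambda_\zeta^I$ is a sum over cycle lengths at most $n-2$; applying Lemma~\ref{lem:cycle_n-2} term by term and using $\tilde\chi_{(n-1,1)}((n))=-1/(n-1)$ gives the uniform bound $\lambda_\zeta^I<\lambda_{(n-1,1)}^I+(n-2)!$ for every non-hook $\zeta$. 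For hooks, Lemma~\ref{re:n-m,1^m}(a) confines the maximum of $\lambda_{(n-m,1^m)}^I$ to $m\in\{1,2,n-3,n-2,n-1\}$. Thus in all three parts only $(1^n)$, $(n-1,1)$, $(n-2,1^2)$, $(3,1^{n-3})$, $(2,1^{n-2})$ and the single non-hook $(2^2,1^{n-4})$ excluded from Lemma~\ref{lem:n-2,1,1} need to be examined, and the cases $n\in\{7,8\}$ lying outside the hypotheses of Lemma~\ref{lem:n-2,1,1} are settled by direct computation.

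For part (a) the graph is connected with degree $\lambda_{(n)}^I$, and I want $(1^n)$ to dominate everything else. Lemma~\ref{cor:lower_bound_1^n_n-1,1_odd} supplies the gap $\lambda_{(1^n)}^I>\lambda_{(n-1,1)}^I+\frac{n}{2}(n-2)!$, while the closed forms in Table~\ref{tab:tab1} give $\lambda_{(n-2,1^2)}^I-\lambda_{(n-1,1)}^I=n(n-3)!-P$ with $P\ge0$, so $\lambda_{(n-2,1^2)}^I\le\lambda_{(n-1,1)}^I+n(n-3)!$. Since $\frac{n}{2}(n-2)!>n(n-3)!$ for $n>4$, the gap beats both this and the non-hook bound, so $(1^n)$ strictly exceeds $(n-2,1^2)$, $(2^2,1^{n-4})$ and every non-hook; finally $(3,1^{n-3})\le(n-2,1^2)$ by Lemma~\ref{re:n-m,1^m}(d) and $(2,1^{n-2})\le(n-1,1)$ by Lemma~\ref{lem:conjugate_character} (the even-$k$ terms being nonnegative), giving $(1^n)$ as the unique attainer with multiplicity $1$. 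For part (b) the graph splits into two isomorphic components and $\lambda_{(n)}^I=\lambda_{(1^n)}^I$ is the degree. Here conjugation (Lemma~\ref{lem:conjugate_character}) yields $\lambda_\zeta^I=\lambda_{\zeta'}^I$, so $(n-2,1^2)$ and $(3,1^{n-3})$ share a value and, crucially, $\lambda_{(2^2,1^{n-4})}^I=\lambda_{(n-2,2)}^I<\lambda_{(n-2,1^2)}^I$ by Lemma~\ref{lem:n-2,1,1} (as $(n-2,2)$ is a non-hook other than $(2^2,1^{n-4})$). Together with Lemma~\ref{re:n-m,1^m}(b) (strict since its equality cases require an even member of $I$) this leaves $(n-2,1^2)$ and $(3,1^{n-3})$ as the joint attainers, of common dimension $\frac{(n-1)(n-2)}{2}$, so \eqref{eq:multi} gives multiplicity $\frac{(n-1)^2(n-2)^2}{2}$.

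For part (c) the graph is connected; $(1^n)$ is eliminated by Lemma~\ref{cor:n_even_1^n_n-1,1} (it lies below $(n-1,1)\le(n-2,1^2)$), and Lemma~\ref{re:n-m,1^m}(b),(d) push $(n-1,1)$ and $(3,1^{n-3})$ below $(n-2,1^2)$, except in the boundary case $I=\{2,\ldots,n-2,n\}$, where Lemma~\ref{re:n-m,1^m}(c) makes $(2,1^{n-2})$ strictly largest. The one genuinely hard point, which I expect to be the main obstacle, is the exceptional non-hook $(2^2,1^{n-4})$: I must show $\lambda_{(2^2,1^{n-4})}^I<\lambda_{(2,1^{n-2})}^I$ for every admissible $I$. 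The plan is to first verify from Table~\ref{tab:tab1} that $D(k):=\tilde\chi_{(n-1,1)}((k,1^{n-k}))-\tilde\chi_{(n-2,2)}((k,1^{n-k}))>0$ for $2\le k\le n-2$; then, using $(2,1^{n-2})'=(n-1,1)$, $(2^2,1^{n-4})'=(n-2,2)$ and the vanishing of the $n$-cycle term for $(2^2,1^{n-4})$, one gets
\[
\lambda_{(2,1^{n-2})}^I-\lambda_{(2^2,1^{n-4})}^I=(n-2)!+\sum_{\substack{k\in I\\ k\ \mathrm{odd}}}\binom{n}{k}(k-1)!\,D(k)-\sum_{\substack{k\in I\setminus\{n\}\\ k\ \mathrm{even}}}\binom{n}{k}(k-1)!\,D(k).
\]
Since the odd-$k$ terms are nonnegative, the worst case is $I$ containing only even numbers (together with $n$), which reduces the claim to the explicit inequality $\sum_{k\ \mathrm{even},\,2\le k\le n-2}\binom{n}{k}(k-1)!\,D(k)<(n-2)!$; this holds because these terms decrease geometrically and the leading term $k=n-2$ contributes only slightly more than $\frac12(n-2)!$. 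Granting this estimate, $(2^2,1^{n-4})$ never beats $(2,1^{n-2})$, so the strictly second largest eigenvalue is attained by $(n-2,1^2)$ or $(2,1^{n-2})$; which of the two is larger (or whether they tie) varies with the parity pattern of $I$, which is exactly why the multiplicity is left undetermined.
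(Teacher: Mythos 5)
Your reduction to hooks plus $(2^2,1^{n-4})$, and your arguments for parts (a) and (b), are sound and essentially follow the paper's route (the conjugation identity $\lambda_{(2^2,1^{n-4})}^I=\lambda_{(n-2,2)}^I$ combined with Lemma~\ref{lem:n-2,1,1} is a nice shortcut for the all-odd case, where the paper instead extends the proof of Lemma~\ref{lem:n-2,1,1} by checking $f(n,k)\ge\tilde\chi_{(2^2,1^{n-4})}((k,1^{n-k}))$). The gap is in part (c): the inequality you isolate as the key step, $\lambda_{(2^2,1^{n-4})}^I<\lambda_{(2,1^{n-2})}^I$, is \emph{false}. Take $n=8$ and $I=\{2,4,6,8\}$, which is exactly your worst case. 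One computes $D(2)=\tfrac{3}{14}$, $D(4)=\tfrac{23}{70}$, $D(6)=\tfrac{27}{140}$, so $\sum_{k\in\{2,4,6\}}\binom{8}{k}(k-1)!\,D(k)=6+138+648=792>720=(n-2)!$, and your own identity then gives $\lambda_{(2,1^{6})}^I-\lambda_{(2^2,1^{4})}^I=720-792=-72<0$; direct evaluation via \eqref{eq:eigenvalue} confirms $\lambda_{(2,1^6)}^I=40$ while $\lambda_{(2^2,1^4)}^I=112$. The heuristic that the $k=n-2$ term contributes ``only slightly more than $\tfrac12(n-2)!$'' is where things break: that term equals $\tfrac{n}{2}(n-3)!+(n-1)(n-4)!$, which is $\tfrac{9}{10}(n-2)!$ at $n=8$ and about $0.79\,(n-2)!$ at $n=10$; even for $n\ge10$, where the full sum does drop below $(n-2)!$, it does so by a thin margin (about $0.97\,(n-2)!$ at $n=10$), so ``decreasing geometrically'' is not a proof and would need a genuine estimate.

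The repair is to compare $(2^2,1^{n-4})$ against $(n-2,1^2)$ instead of $(2,1^{n-2})$, which is what the paper does in inequality~\eqref{ineq:36}: for $n$ even one has $\tilde\chi_{(n-2,1^2)}((k,1^{n-k}))\ge\tilde\chi_{(2^2,1^{n-4})}((k,1^{n-k}))$ for every $2\le k\le n$ except $k=n-2$, and the combined contribution of $k=n-2$ and $k=n$ to $\lambda_{(n-2,1^2)}^I-\lambda_{(2^2,1^{n-4})}^I$ is $2(n-3)!-(n-1)(n-4)!>0$. Since $n\in I$ by hypothesis, this rules out $(2^2,1^{n-4})$ for every admissible $I$ (and is consistent with the $n=8$ data above, where $\lambda_{(6,1^2)}^I=312>112$). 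With that substitution your part (c) goes through; the rest of your part (c) (eliminating $(1^n)$, $(n-1,1)$ and $(3,1^{n-3})$ via Lemmas~\ref{cor:n_even_1^n_n-1,1} and \ref{re:n-m,1^m}) is fine.
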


\begin{proof}
(a) For any $\zeta\vdash n$, we have 
	\begin{eqnarray}
		\lambda_\zeta^I= \sum_{k\in I\setminus\{n\}} \binom{n}{k}(k-1)! \tilde{\chi}_\zeta((k,1^{n-k}))+ (n-1)! \tilde{\chi}_\zeta((n)). \nonumber
	\end{eqnarray}
According to Lemma~\ref{lem:character_for_n_cycles}, if $\zeta\ne (n-m,1^m)$ with $0\leq m\leq n-1$, then $\tilde{\chi}_\zeta((n))=0$. Thus, by Lemma~\ref{lem:cycle_n-2}, for any $\zeta\ne (n-m,1^m)$ with $0\leq m\leq n-1$,
	\begin{eqnarray}
		\lambda_\zeta^I &=& \sum_{k\in I\setminus\{n\}} \binom{n}{k}(k-1)! \tilde{\chi}_\zeta((k,1^{n-k})) \nonumber\\
		                &<& \sum_{k\in I\setminus\{n\}} \binom{n}{k}(k-1)! \tilde{\chi}_{(n-1,1)}((k,1^{n-k})) \nonumber \\
		                &=& \lambda_{(n-1,1)}^I-(n-1)! \tilde{\chi}_{(n-1,1)}((n)) \nonumber\\
		                &=& \lambda_{(n-1,1)}^I+(n-2)!. \label{eq:21}
	\end{eqnarray}
   
Now suppose $\zeta=(n-m,1^m)$ with $2\leq m\leq n-2$. Since $n\ge 7$ is odd, $\{n\}\subset I \subseteq \{2,3,\ldots,n-2,n\}$ and $I$ contains at least one even number less than $n-2$, by Lemmas~\ref{lem:character_for_n_cycles} and \ref{lem:cycle_n-2} we obtain 
    \begin{eqnarray}
    	\lambda_\zeta^I&=&\sum_{k\in I\setminus\{n\}} \binom{n}{k}(k-1)! \tilde{\chi}_\zeta((k,1^{n-k})) + (n-1)! \frac{\chi_\zeta((n))}{\chi_\zeta(\mathbf{1})}\nonumber \\
    	&=& \sum_{k\in I\setminus\{n\}} \binom{n}{k}(k-1)! \tilde{\chi}_\zeta((k,1^{n-k})) + (-1)^m (n-m-1)!m! \nonumber \\
    	&<& \sum_{k\in I\setminus\{n\}} \binom{n}{k}(k-1)! \tilde{\chi}_{(n-1,1)}((k,1^{n-k})) + 2(n-3)! \nonumber \\
    	&=& \lambda_{(n-1,1)}^I +(n-2)!+2 (n-3)! \nonumber \\
    	&=& \lambda_{(n-1,1)}^I+n(n-3)!  \nonumber \\
    	&<& \lambda_{(n-1,1)}^I+2(n-2)! .\label{eq:22}
    \end{eqnarray}
On the other hand, by Lemma~\ref{cor:lower_bound_1^n_n-1,1_odd} we have 
\begin{eqnarray}\label{eq:23}
	\lambda_{(1^n)}^I-\lambda_{(n-1,1)}^I>\frac{n}{2}(n-2)! .
\end{eqnarray}
It follows from (\ref{eq:21}), (\ref{eq:22}) and (\ref{eq:23}) that $(1^n)$ is the unique partition of $n$ whose corresponding Specht module achieves the second largest eigenvalue of $\mathrm{Cay}(S_n,C(n,I))$. By \eqref{eq:multi}, the multiplicity of this eigenvalue is equal to the square of the degree of the sign representation $\rho_{(1^n)}$, which is equal to $1$.  
 
(b) One can easily verify the result for $n=7$. Now suppose $n\ge 9$. As there are only odd numbers in $I$, $\mathrm{Cay}(S_n,C(n,I))$ has exactly two connected components and $\lambda_\zeta^I=\lambda_{\zeta'}^I$ for any $\zeta\vdash n$. Thus the largest eigenvalue of $\mathrm{Cay}(S_n,C(n,I))$ is attained by $(n)$ and $(1^n)$. Since $n$ is odd, one can verify that the function $f(n,k)$ defined in the proof of Lemma~\ref{lem:n-2,1,1} also satisfies $f(n,k)\ge \tilde{\chi}_{(2^2,1^{n-4})}((k,1^{n-k}))$ for $2\leq k\leq n-2$. Thus the result in Lemma~\ref{lem:n-2,1,1} actually applies to any $\zeta\ne (n-m,1^m)$ with $0\leq m\leq n-1$, that is, $\lambda_{\zeta}^I<\lambda^I_{(n-2,1^2)}$. Since $I$ contains only odd numbers, we have $I\ne \{2,3,\ldots,n-2,n\}$. So by parts (a) and (b) of Lemma~\ref{re:n-m,1^m}, we get $\lambda^I_{(n-2,1^2)}>\lambda^I_{(n-1,1)}=\lambda^I_{(2,1^{n-2})}$ and the maximum of $\lambda_{(n-m,1^m)}^I$ for $1\leq m\leq n-2$ can only be attained by $m=2,n-3$. That is, the strictly second largest eigenvalue of $\mathrm{Cay}(S_n,C(n,I))$ is attained by $(n-2,1^2)$ and $(3,1^{n-3})$. As the dimensions of $\rho_{(n-2,1^2)}$ and $\rho_{(3,1^{n-3})}$ are both $\frac{(n-1)(n-2)}{2}$, the multiplicity of this eigenvalue is $\frac{(n-1)^2(n-2)^2}{2}$ by equation \eqref{eq:multi}.
 
(c) One can verify that the result is true for $n=8$. Now suppose $n\ge 10$. First, by Lemma~\ref{lem:n-2,1,1}, for any $\zeta\ne (2^2,1^{n-4})$, $(n-m,1^m)$ with $0\leq m\leq n-1$, we have $\lambda_{\zeta}^I<\lambda_{(n-2,1^2)}^I$. Second, by parts (a) and (b) of Lemma~\ref{re:n-m,1^m}, the maximum of $\lambda_{(n-m,1^m)}^I$ for $1\leq m\leq n-1$ can only be attained by $m=1,2,n-3,n-2$ or $n-1$, and $\lambda_{(n-2,1^2)}^I>\lambda_{(3,1^{n-3})}^I$ as $I$ contains the even number $n$. Thirdly, by Lemma~\ref{cor:n_even_1^n_n-1,1} and parts (b) and (c) of Lemma~\ref{re:n-m,1^m}, we have $\lambda_{(1^n)}^I<\lambda_{(n-1,1)}^I<\max\left\{\lambda_{(n-2,1^2)}^I,\lambda_{(2,1^{n-2})}^I\right\}$. Therefore, the second largest eigenvalue of $\mathrm{Cay}(S_n,C(n,I))$ can only be attained by $(n-2,1^2),~(2,1^{n-2})$ and $(2^2,1^{n-4})$. 
We now show that we can rule out $(2^2,1^{n-4})$. In fact, since $n$ is even, we have $\tilde{\chi}_{(n-2,1^2)}((k,1^{n-k}))>\tilde{\chi}_{(2^2,1^{n-4})}((k,1^{n-k}))$ for every $2\leq k\leq n$ except $k=n-2$. Thus,
\begin{eqnarray}
	& &\lambda_{(n-2,1^2)}^I-\lambda_{(2^2,1^{n-4})}^I\nonumber \\
	&=&\sum_{k\in I} \binom{n}{k}(k-1)!\left(\frac{(n-k-1)(n-k-2)-2c_2}{(n-1)(n-2)}+(-1)^k\frac{(n-k)(n-k-3)+2c_2}{n(n-3)}  \right) \nonumber\\ 
	&\ge& \sum_{k\in \{n-2,n\}} \binom{n}{k}(k-1)!\left(\tilde{\chi}_{(n-2,1^2)}((k,1^{n-k}))-\tilde{\chi}_{(2^2,1^{n-4})}((k,1^{n-k}))\right) \nonumber \\
	&=& 2(n-3)!-(n-1)(n-4)! \nonumber\\
	&>&0,  \label{ineq:36}
\end{eqnarray} 
from which the desired result follows. 
\end{proof}

The following is an immediate corollary of Theorem \ref{thm:n}.

\begin{cor}\label{cor:aldous_n}
	Suppose $n\ge 7$ and $\{n\}\subseteq I \subseteq \{2,3,\ldots,n-2,n\}$. Then $\mathrm{Cay}(S_n,C(n,I))$ does not possess the Aldous property.
\end{cor}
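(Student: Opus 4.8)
The plan is to read the corollary off directly from Theorem~\ref{thm:n}, which under exactly the same hypotheses determines which irreducible representations attain the strictly second largest eigenvalue of $\mathrm{Cay}(S_n,C(n,I))$. Recall that, by definition, the Aldous property holds precisely when this eigenvalue is attained by the standard representation $\rho_{(n-1,1)}$. Hence it suffices to verify that in every case of Theorem~\ref{thm:n} the partition $(n-1,1)$ is \emph{not} among those achieving the strictly second largest eigenvalue, and moreover that $\lambda_{(n-1,1)}^I$ lies strictly below it.

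First I would check that the three cases of Theorem~\ref{thm:n} are exhaustive for $n\ge 7$ and $\{n\}\subseteq I\subseteq\{2,3,\ldots,n-2,n\}$: case (c) covers all even $n$, while for odd $n$ the set $I$ either contains an even number (case (a)) or consists only of odd numbers (case (b)). Next I would treat each case in turn and confirm that $\rho_{(n-1,1)}$ is strictly dominated. In case (a) the strictly second largest eigenvalue is attained \emph{uniquely} by $(1^n)$, so $\lambda_{(n-1,1)}^I$ is strictly smaller. In case (b) it is attained by $(n-2,1^2)$ and $(3,1^{n-3})$, and the inequality $\lambda_{(n-2,1^2)}^I>\lambda_{(n-1,1)}^I$ obtained in the proof of Theorem~\ref{thm:n}(b) (via parts (a) and (b) of Lemma~\ref{re:n-m,1^m}) shows $(n-1,1)$ again fails. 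In case (c) the eigenvalue is attained by $(n-2,1^2)$ or $(2,1^{n-2})$, and the chain $\lambda_{(1^n)}^I<\lambda_{(n-1,1)}^I<\max\{\lambda_{(n-2,1^2)}^I,\lambda_{(2,1^{n-2})}^I\}$ established there places $\lambda_{(n-1,1)}^I$ strictly below the strictly second largest eigenvalue.

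Since in all three cases the standard representation $\rho_{(n-1,1)}$ yields an eigenvalue strictly smaller than the strictly second largest one, it does not attain that eigenvalue, and therefore $\mathrm{Cay}(S_n,C(n,I))$ does not possess the Aldous property. There is essentially no technical obstacle to overcome: the entire analytic content has already been absorbed into Theorem~\ref{thm:n}, so the only point requiring care is the bookkeeping, namely confirming the exhaustiveness of the case split and checking in each case that the standard representation is \emph{strictly} separated from the extremal partitions rather than merely distinct from them.
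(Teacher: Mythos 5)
Your proposal is correct and matches the paper exactly: the paper gives no separate argument, stating only that the corollary is immediate from Theorem~\ref{thm:n}, and your case-by-case check (exhaustiveness of the three cases, plus the strict inequalities $\lambda_{(n-1,1)}^I<\lambda_{(1^n)}^I$ in case (a), $\lambda_{(n-1,1)}^I<\lambda_{(n-2,1^2)}^I$ in case (b), and $\lambda_{(n-1,1)}^I<\max\{\lambda_{(n-2,1^2)}^I,\lambda_{(2,1^{n-2})}^I\}$ in case (c), all already established in the proof of that theorem) is precisely the intended bookkeeping.
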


\section{$\mathrm{Cay}(S_n,C(n,I))$ with $\{n-1,n\}\subseteq I$} 
\label{sec:n1n}

\begin{lem}\label{cor:odd_n_n-1}
Suppose $n\ge 7$ is odd and $\{n-1,n\}\subseteq I \subseteq\{2,3,\ldots,n-1,n\}$. Then the following statements hold:
  \begin{itemize}
    \item[\rm (a)] if the largest number in $I\setminus \{n-1,n\}$ is odd and $I\ne\{2,3,n-1,n\}$, then $\lambda_{(1^n)}^I>\lambda_{(n-1,1)}^I$;
    \item[\rm (b)] if the largest number in $I\setminus \{n-1,n\}$  is even, then $\lambda_{(n-1,1)}^I>\lambda_{(1^n)}^I$;
    \item[\rm (c)] if $I=\{2,3,n-1,n\}$ or $\{n-1,n\}$, then $\lambda_{(n-1,1)}^I=\lambda_{(1^n)}^I$. 
  \end{itemize}
\end{lem}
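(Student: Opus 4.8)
The plan is to reduce the comparison of $\lambda_{(1^n)}^I$ and $\lambda_{(n-1,1)}^I$ to the analogous comparison for the smaller index set $J := I\setminus\{n-1,n\}\subseteq\{2,3,\ldots,n-2\}$, which is already settled by Lemmas~\ref{lem:diff_(1^n)_(n-1,1)_odd} and \ref{lem:diff_(1^n)_(n-1,1)_even}. Using the explicit formulas \eqref{eq: odd_eigenvalue_(1^n)} and \eqref{eq:odd_eigenvalue_(n-1,1)}, I would first write the difference as the sum
$$
\lambda_{(1^n)}^I-\lambda_{(n-1,1)}^I=\sum_{k\in I}\binom{n}{k}(k-1)!\left((-1)^{k-1}-\frac{n-k-1}{n-1}\right),
$$
which is additive over $k$, and then isolate the two terms indexed by $k=n-1$ and $k=n$.

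The crux is that these two terms cancel precisely because $n$ is odd. For $k=n$ one has $\binom{n}{n}=1$, $(k-1)!=(n-1)!$, $(-1)^{n-1}=1$ (as $n-1$ is even) and $\frac{n-k-1}{n-1}=-\frac{1}{n-1}$, giving the contribution $(n-1)!\bigl(1+\tfrac{1}{n-1}\bigr)=n(n-2)!$. For $k=n-1$ one has $\binom{n}{n-1}=n$, $(k-1)!=(n-2)!$, $(-1)^{n-2}=-1$ (as $n-2$ is odd) and $\frac{n-k-1}{n-1}=0$, giving the contribution $-n(n-2)!$. The two cancel, so that
$$
\lambda_{(1^n)}^I-\lambda_{(n-1,1)}^I=\sum_{k\in J}\binom{n}{k}(k-1)!\left((-1)^{k-1}-\frac{n-k-1}{n-1}\right)=\lambda_{(1^n)}^J-\lambda_{(n-1,1)}^J,
$$
where $J=I\setminus\{n-1,n\}\subseteq\{2,3,\ldots,n-2\}$.

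With this reduction in hand, each part follows by applying an earlier lemma to $J$. For part (a), the hypotheses that the largest element of $J$ is odd and that $I\ne\{2,3,n-1,n\}$ (equivalently $J\ne\{2,3\}$) let me invoke Lemma~\ref{lem:diff_(1^n)_(n-1,1)_odd} to get $\lambda_{(1^n)}^J>\lambda_{(n-1,1)}^J$, hence $\lambda_{(1^n)}^I>\lambda_{(n-1,1)}^I$. For part (b), the largest element of the nonempty set $J$ is even, so Lemma~\ref{lem:diff_(1^n)_(n-1,1)_even} gives $\lambda_{(n-1,1)}^J>\lambda_{(1^n)}^J$, hence $\lambda_{(n-1,1)}^I>\lambda_{(1^n)}^I$. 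For part (c), either $J=\{2,3\}$ (when $I=\{2,3,n-1,n\}$), in which case Lemma~\ref{lem:diff_(1^n)_(n-1,1)_odd} yields equality, or $J=\emptyset$ (when $I=\{n-1,n\}$), in which case the reduced sum is empty and equality is immediate. These three cases, sorted by the largest element of $J$ (with $J=\emptyset$ and $J=\{2,3\}$ peeled off into (c)), are exhaustive and mutually exclusive, so the lemma is fully covered.

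The only delicate point is the sign bookkeeping in the cancellation of the $k=n-1$ and $k=n$ terms, where the oddness of $n$ is used to evaluate both $(-1)^{n-1}$ and $(-1)^{n-2}$; everything else is a direct appeal to the two previously established lemmas, so I anticipate no substantive obstacle.
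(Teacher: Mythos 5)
Your proposal is correct and follows essentially the same route as the paper: both reduce the difference $\lambda_{(1^n)}^I-\lambda_{(n-1,1)}^I$ to the corresponding difference for $I\setminus\{n-1,n\}$ (the paper asserts the vanishing of the $k=n-1$ and $k=n$ contributions implicitly, whereas you compute the cancellation explicitly) and then invoke Lemmas~\ref{lem:diff_(1^n)_(n-1,1)_odd} and \ref{lem:diff_(1^n)_(n-1,1)_even}. No issues.
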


\begin{proof}
Note that
\begin{eqnarray*} 
  \lambda_{(1^n)}^I-\lambda_{(n-1,1)}^I
  &=& \sum_{k\in I} \binom{n}{k}(k-1)!\left((-1)^{k-1}-\frac{n-k-1}{n-1}\right) \nonumber \\
  &=& \sum_{k\in I\setminus \{n-1,n\}} \binom{n}{k}(k-1)!\left((-1)^{k-1}-\frac{n-k-1}{n-1}\right) \nonumber \\
  &=& \lambda_{(1^n)}^{I\setminus \{n-1,n\}}-\lambda_{(n-1,1)}^{I\setminus \{n-1,n\}}.
\end{eqnarray*}
Thus, if $I=\{n-1,n\}$, then $\lambda_{(1^n)}^{I}=\lambda_{(n-1,1)}^I$, and if $I\ne \{n-1,n\}$, then we obtain the desired results by applying Lemmas~\ref{lem:diff_(1^n)_(n-1,1)_odd} and \ref{lem:diff_(1^n)_(n-1,1)_even} directly to $I \setminus \{n-1,n\} \neq \emptyset$.  
\end{proof}


\begin{lem}\label{lem:last_one}
Suppose $n\ge 7$ and $\{n,n-1\}\subseteq I \subseteq \{2,3,\ldots,n\}$. If $n$ is even, then $\lambda_{(n-2,1^2)}^I>\lambda_{(n-m,2,1^{m-2})}^I$ for $m=2,3,n-3,n-2$; if $n$ is odd, then $\lambda_{(n-2,1^2)}^I>\lambda_{(n-m,2,1^{m-2})}^I$ for $m=2,3,n-3$.
\end{lem}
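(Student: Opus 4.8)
The plan is to expand, via \eqref{eq:eigenvalue}, the difference into a single sum
$$
\lambda_{(n-2,1^2)}^I-\lambda_{(n-m,2,1^{m-2})}^I=\sum_{k\in I}\binom{n}{k}(k-1)!\,D_m(k),\qquad
D_m(k):=\tilde{\chi}_{(n-2,1^2)}((k,1^{n-k}))-\tilde{\chi}_{(n-m,2,1^{m-2})}((k,1^{n-k})),
$$
and to show it is positive for every admissible $I$. A first simplification is to record the transpose identity $(n-m,2,1^{m-2})'=(m,2,1^{n-m-2})$, which shows that the cases $m=n-2$ and $m=n-3$ are exactly the conjugates of $m=2$ and $m=3$. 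By Lemma~\ref{lem:conjugate_character} this yields $\tilde{\chi}_{(2^2,1^{n-4})}=(-1)^{k-1}\tilde{\chi}_{(n-2,2)}$ and $\tilde{\chi}_{(3,2,1^{n-5})}=(-1)^{k-1}\tilde{\chi}_{(n-3,2,1)}$ on $(k,1^{n-k})$, so throughout it suffices to work with the three explicit normalized characters of $(n-2,1^2)$, $(n-2,2)$ and $(n-3,2,1)$ taken from Table~\ref{tab:tab1}.

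The two cycle lengths guaranteed to lie in $I$ carry the argument. By Lemma~\ref{lem:character_for_n_cycles} every $(n-m,2,1^{m-2})$ vanishes on the $n$-cycle while $\tilde{\chi}_{(n-2,1^2)}((n))=2/((n-1)(n-2))$, so the term $k=n$ contributes exactly $+2(n-3)!$ to the sum. By Lemma~\ref{lem:character_for_n-1_cycles} we have $\tilde{\chi}_{(n-2,1^2)}((n-1,1))=0$, so the term $k=n-1$ equals $-n(n-2)!\,\tilde{\chi}_{(n-m,2,1^{m-2})}((n-1,1))$, which I would evaluate in closed form; its sign is governed by $(-1)^{m-1}$ together with the parity of $n$ introduced by the conjugation. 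On the remaining range $2\le k\le n-2$ I would use the Table~\ref{tab:tab1} formulas to prove $D_m(k)\ge 0$ with only a short, explicit list of exceptions: for $m=2$ (and its conjugate on odd cycles) the single negative term is $k=2$, where a direct simplification gives $D_2(2)=-4/(n(n-1))$ and hence a contribution of exactly $-2$; for $m=n-2$ with $n$ even the single negative term is $k=n-2$, contributing $-(n-1)(n-4)!$; and for $m=3$ and $m=n-3$ one checks $D_m(k)\ge 0$ across the whole range (the only delicate value being $k=n-3$ when $n$ is odd, which is positive).

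Assembling these pieces, in every case the guaranteed gain $2(n-3)!$ from the $n$-cycle dominates the few adverse terms. The main obstacle is precisely the bookkeeping of those adverse terms when the sign $(-1)^{m-1}$ and the parity of $n$ make the $(n-1)$-cycle term negative: this occurs for $m=3,n-3$ with $n$ even, where that term equals $-3(n-3)!/(n-4)$, and one must verify $2(n-3)!-3(n-3)!/(n-4)>0$, valid for $n\ge 7$; while for $m=n-2$ with $n$ even one must absorb the single negative bulk term at $k=n-2$ via $2(n-3)!-(n-1)(n-4)!=(n-5)(n-4)!>0$, which is exactly the estimate already encountered in \eqref{ineq:36}. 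Since all other terms are nonnegative and the few negative ones are $O((n-3)!)$, the total is positive, giving $\lambda_{(n-2,1^2)}^I>\lambda_{(n-m,2,1^{m-2})}^I$ for the stated $m$ and parities. Finally, the exclusion of $m=n-2$ when $n$ is odd is structural rather than an artifact of the estimate: in that case $\tilde{\chi}_{(2^2,1^{n-4})}((n-1,1))=(-1)^{n-3}/\dim\rho_{(2^2,1^{n-4})}>0$, so the $(n-1)$-cycle term favours $(2^2,1^{n-4})$ and it becomes a genuine competitor that cannot be dominated by $(n-2,1^2)$.
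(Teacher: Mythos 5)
Your proposal is correct and follows essentially the same route as the paper: a termwise comparison of the normalized characters from Table~\ref{tab:tab1} (with Lemma~\ref{lem:conjugate_character} reducing $m=n-2,n-3$ to the conjugates of $m=2,3$), where the guaranteed cycle lengths $n-1$ and $n$ in $I$ supply the slack $2(n-3)!$ that absorbs the few negative contributions you identify, exactly matching the paper's groupings of $\{n-1,n\}$, $\{2,n\}$ and $\{n-2,n\}$ for $m=3,n-3$, $m=2$ and $m=n-2$ respectively. One small correction that does not affect the argument: for $m=3$ the $(n-1)$-cycle term equals $-3(n-3)!/(n-4)$ for \emph{every} $n\ge 7$, not only for $n$ even, since $\chi_{(n-3,2,1)}((n-1,1))=(-1)^{3-1}=1$ regardless of the parity of $n$; the bound $2(n-3)!>3(n-3)!/(n-4)$ that you verify already covers this case.
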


\begin{proof}
With the help of Table~\ref{tab:tab1} one can verify that for $m=3, n-3$ we have 
$$
 \sum_{k=n-1}^n\binom{n}{k}(k-1)!\tilde{\chi}_{(n-2,1^2)}((k,1^{n-k}))>\sum_{k=n-1}^n\binom{n}{k}(k-1)!\tilde{\chi}_{(n-m,2,1^{m-2})}((k,1^{n-k}))
$$	
and 
$$
 \tilde{\chi}_{(n-2,1^2)}((k,1^{n-k}))\ge \tilde{\chi}_{(n-m,2,1^{m-2})}((k,1^{n-k})) \quad \text{for~any~}2\leq k\leq n-2.
$$ 
Since $\{n-1,n\}\subseteq I \subseteq \{2,3,\ldots,n\}$, it follows that $\lambda_{(n-2,1^2)}^I>\lambda_{(n-m,2,1^{m-2})}^I$ for $m=3, n-3$.
 
We have $\tilde{\chi}_{(n-2,1^2)}((k,1^{n-k}))>\tilde{\chi}_{(n-2,2)}((k,1^{n-k}))$ for $3\leq k\leq n$. We also have $\sum_{k\in\{2,n\}}\binom{n}{k}(k-1)!\tilde{\chi}_{(n-2,1^2)}((k,1^{n-k}))>\sum_{k\in \{2,n\}}\binom{n}{k}(k-1)!\tilde{\chi}_{(n-2,2)}((k,1^{n-k}))$. Thus $\lambda_{(n-2,1^2)}^I>\lambda_{(n-2,2)}^I$ whenever $n \in I\subseteq \{2,3,\ldots,n\}$. Inequality~(\ref{ineq:36}) implies that $\lambda_{(n-2,1^2)}^I>\lambda_{(2^2,1^{n-4})}^I$ whenever $n$ is even and $n \in I\subseteq \{2,3,\ldots,n\}$.
\end{proof}

The main result in this section is as follows.

\begin{thm}\label{thm:n_n-1}
Suppose $n\ge 7$ and $\{n,n-1\}\subseteq I \subseteq \{2,3,\ldots,n\}$. Then the following statements hold:
\begin{itemize}
\item[\rm (a)] if $n$ is even, then the second largest eigenvalue of $\mathrm{Cay}(S_n, C(n,I))$ can only be attained by $(1^n),(n-2,1^2)$ or $(2,1^{n-2})$;
\item[\rm (b)] if $n$ is odd, then the second largest eigenvalue of $\mathrm{Cay}(S_n, C(n,I))$ can only be achieved by $(1^n),(n-2,1^2),(3,1^{n-3})$ or $(2^2,1^{n-4})$.
\end{itemize}
\end{thm}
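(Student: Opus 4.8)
The plan is to work entirely with the numbers $\lambda_\zeta^I$ from \eqref{eq:eigenvalue} and to bound $\max_{\zeta\ne(n)}\lambda_\zeta^I$. Since $\{n-1,n\}\subseteq I$ puts a cycle of even length into the connection set, $C(n,I)\not\subseteq A_n$, so $\mathrm{Cay}(S_n,C(n,I))$ is connected, $\lambda_{(n)}^I$ is its unique largest eigenvalue, and the second largest eigenvalue is exactly $\max_{\zeta\ne(n)}\lambda_\zeta^I$. I would split the partitions $\zeta\vdash n$ with $\zeta\ne(n)$ into three families: the \emph{hooks} $(n-m,1^m)$ with $1\le m\le n-1$, the \emph{near-hooks} $(n-m,2,1^{m-2})$ with $2\le m\le n-2$, and everything else. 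The goal is to show that every partition outside the claimed lists is strictly dominated by a partition in the list.

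First I would dispose of the third family. If $\zeta$ is neither a hook nor a near-hook, then Lemmas~\ref{lem:character_for_n_cycles} and \ref{lem:character_for_n-1_cycles} give $\tilde{\chi}_\zeta((n))=\tilde{\chi}_\zeta((n-1,1))=0$; and since $(n-2,1^2)$ is a hook, $\tilde{\chi}_{(n-2,1^2)}((n-1,1))=0$ as well. Hence the $(n-1)$-cycle terms contribute nothing to either $\lambda_\zeta^I$ or $\lambda_{(n-2,1^2)}^I$, so both are unchanged if we replace $I$ by $I'=I\setminus\{n-1\}\subseteq\{2,\ldots,n-2,n\}$. As $\{n\}\subseteq I'$, Lemma~\ref{lem:n-2,1,1} yields $\lambda_\zeta^{I'}<\lambda_{(n-2,1^2)}^{I'}$ for $n\ge 9$ (the cases $n=7,8$ being checked directly), whence $\lambda_\zeta^I<\lambda_{(n-2,1^2)}^I$. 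This is the conceptual heart of the argument: transferring Lemma~\ref{lem:n-2,1,1}, which is stated for $n-1\notin I$, to the present setting by observing that the extra $(n-1)$-cycle term vanishes on all the partitions involved.

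Next I would treat the two structured families using the existing reduction lemmas. For near-hooks, Remark~\ref{re:(n-m,2,1^m-2)} confines the maximiser of $\lambda_{(n-m,2,1^{m-2})}^I$ to $m\in\{2,3,n-3,n-2\}$, and Lemma~\ref{lem:last_one} shows each such value is strictly below $\lambda_{(n-2,1^2)}^I$ — for all four values of $m$ when $n$ is even, but only for $m\in\{2,3,n-3\}$ when $n$ is odd, so that the single near-hook $(2^2,1^{n-4})$ (the case $m=n-2$) survives in the odd case. For hooks, Lemma~\ref{re:n-m,1^m}(a) confines the maximiser to $m\in\{1,2,n-3,n-2,n-1\}$, i.e.\ to $(n-1,1),(n-2,1^2),(3,1^{n-3}),(2,1^{n-2}),(1^n)$. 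When $n$ is even, Lemma~\ref{re:n-m,1^m}(d) (strict because $n\in I$ is even) removes $(3,1^{n-3})$, leaving $(n-2,1^2),(2,1^{n-2}),(1^n)$ plus the task of discarding $(n-1,1)$; when $n$ is odd I retain $(n-2,1^2),(3,1^{n-3}),(1^n)$ and must discard both $(n-1,1)$ and $(2,1^{n-2})$.

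The final and most delicate step is eliminating $(n-1,1)$ and, for odd $n$, also $(2,1^{n-2})$. Using \eqref{eq:odd_eigenvalue_(n-1,1)}, Lemma~\ref{lem:conjugate_character}, and Lemma~\ref{re:n-m,1^m}(b) one has the chain $\lambda_{(2,1^{n-2})}^I\le\lambda_{(n-1,1)}^I\le\lambda_{(n-2,1^2)}^I$, where the gap $\lambda_{(n-1,1)}^I-\lambda_{(2,1^{n-2})}^I=2\sum_{k\in I,\,k\ \mathrm{even}}\binom{n}{k}(k-1)!\,\frac{n-k-1}{n-1}$ and, since $n-1\in I$, the second inequality is strict unless $I=\{2,3,\ldots,n\}$ by Lemma~\ref{re:n-m,1^m}(b). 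When the second inequality is strict, both offending hooks fall below the listed $(n-2,1^2)$. In the exceptional full-set case I would argue by parity: for even $n$, Lemma~\ref{re:n-m,1^m}(c) gives $\lambda_{(2,1^{n-2})}^I>\lambda_{(n-1,1)}^I=\lambda_{(n-2,1^2)}^I$, so $(n-1,1)$ sits strictly below the listed $(2,1^{n-2})$; for odd $n$, Lemma~\ref{cor:odd_n_n-1}(a) gives $\lambda_{(1^n)}^I>\lambda_{(n-1,1)}^I$, while the gap formula shows that $\lambda_{(2,1^{n-2})}^I=\lambda_{(n-1,1)}^I$ forces the only even element of $I$ to be $n-1$, which is incompatible with $I$ being the full set, so for odd $n$ at least one inequality in the chain is always strict and $(2,1^{n-2})$ is eliminated. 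I expect the bookkeeping of these equality (full-set) cases, together with the parity argument showing that the two potential equalities cannot co-occur for odd $n$, to be the main obstacle; the rest is assembled from the cited lemmas.
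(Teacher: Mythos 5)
Your proposal is correct and follows essentially the same route as the paper: the same three-way split into hooks, near-hooks and the rest, the same transfer of Lemma~\ref{lem:n-2,1,1} to $I$ via the vanishing of $\tilde{\chi}_\zeta((n-1,1))$, the same use of Remark~\ref{re:(n-m,2,1^m-2)} and Lemma~\ref{lem:last_one} for near-hooks, and the same use of Lemma~\ref{re:n-m,1^m} together with Lemma~\ref{cor:odd_n_n-1} to dispose of $(n-1,1)$ and $(2,1^{n-2})$, including the exceptional case $I=\{2,3,\ldots,n\}$. The only cosmetic difference is that your parity discussion of when $\lambda_{(2,1^{n-2})}^I=\lambda_{(n-1,1)}^I$ in the odd full-set case is unnecessary, since $\lambda_{(2,1^{n-2})}^I\le\lambda_{(n-1,1)}^I<\lambda_{(1^n)}^I$ already suffices there.
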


\begin{proof} 
(a) One can verify that the result is true for $n=8$. Now suppose $n\ge 10$ and $n$ is even.
According to Lemmas~\ref{lem:character_for_n-1_cycles} and \ref{lem:n-2,1,1}, if $\zeta\ne (n-m,1^m), (n-m,2,1^{m-2})$, then $\lambda_{\zeta}^I=\lambda_\zeta^{I\setminus\{n-1\}}<\lambda_{(n-2,1^2)}^{I\setminus \{n-1\}}=\lambda_{(n-2,1^2)}^I$. On the other hand, by Lemma~\ref{re:n-m,1^m} we have $\lambda_{(n-1,1)}^I<\max\left\{\lambda_{(n-2,1^2)}^I,\lambda_{(2,1^{n-2})}^I\right\}$ and $\lambda_\zeta^I<\lambda_{(n-2,1^2)}^I$ for $\zeta=(n-m,1^m)$ with $3\leq m\leq n-3$. Hence $\lambda_{\zeta}^I<\max\left\{\lambda_{(n-2,1^2)}^I,\lambda_{(2,1^{n-2})}^I\right\}$ for any $\zeta \ne (n),(1^n),(n-2,1^2),(2,1^{n-2})$, $(n-m,2,1^{m-2})$ with $2\leq m\leq n-2$.

Since $n-1\in I\subseteq \{2,3,\ldots,n-1,n\}$, by Remark~\ref{re:(n-m,2,1^m-2)} the maximum of $\lambda^I_{(n-m,2,1^{m-2})}$ for $2\leq m\leq n-2$ can only be attained by $m=2,3,n-3$ or $n-2$. Moreover, Lemma~\ref{lem:last_one} implies that $\lambda_{(n-m,2,1^{m-2})}^I<\lambda_{(n-2,1^2)}^I$ for $m=2,3,n-3,n-2$. Therefore, the second largest eigenvalue of $\mathrm{Cay}(S_n, C(n,I))$ can only be attained by $(1^n),(n-2,1^2)$ or $(2,1^{n-2})$. 
 
(b) One can easily verify this result for $n=7$. Now suppose $n\ge 9$ and $n$ is odd. Similarly to the proof of part (a) above, one can prove that $\lambda_{\zeta}^I=\lambda_\zeta^{I\setminus\{n-1\}}<\lambda_{(n-2,1^2)}^{I\setminus \{n-1\}}=\lambda_{(n-2,1^2)}^I$ for any $\zeta$ other than $(n-m,1^m)$ and $(n-m,2,1^{m-2})$. By Lemma~\ref{re:n-m,1^m}, we have $\lambda_{(n-1,1)}^I\leq \lambda_{(n-2,1^2)}^I$ and $\lambda_{(n-m,1^m)}^I < \lambda_{(n-2,1^2)}^I$ for $3\leq m\leq n-4$. Note that, if $\lambda_{(n-1,1)}^I= \lambda_{(n-2,1^2)}^I$, then $I = \{2,3,\ldots,n\}$ and thus $\lambda_{(1^n)}^I>\lambda_{(n-1,1)}^I$ by Lemma~\ref{cor:odd_n_n-1}. A straightforward computation shows that $\lambda_{(2,1^{n-2})}^I\leq \lambda_{(n-1,1)}^I<\max\left\{\lambda_{(n-2,1^2)}^I,\lambda_{(1^n)}^I\right\}$. Finally, by Lemma \ref{lem:last_one} we have $\lambda_{(n-2,1^2)}^I>\lambda_{(n-m,2,1^{m-2})}^I$ for $m=2,3,n-3$. Combining all these with Remark~\ref{re:(n-m,2,1^m-2)}, we obtain the desired result.	
\end{proof}

Theorem \ref{thm:n_n-1} implies the following result.

\begin{cor}\label{cor:aldous_n-1_n}
Suppose $n\ge 7$ and $\{n,n-1\}\subseteq I \subseteq \{2,3,\ldots,n\}$. Then $\mathrm{Cay}(S_n,C(n,I))$ does not have the Aldous property.
\end{cor}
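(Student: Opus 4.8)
The plan is to derive this corollary directly from Theorem~\ref{thm:n_n-1}, since the bulk of the work---locating all partitions that can possibly attain the second largest eigenvalue---has already been carried out there. The essential point to extract is simply that the standard partition $(n-1,1)$ never appears on the resulting list.

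First I would record that $\mathrm{Cay}(S_n,C(n,I))$ is connected under the hypothesis $\{n-1,n\}\subseteq I$. Indeed, exactly one of $n-1$ and $n$ is even, and a cycle of even length is an odd permutation; hence $C(n,I)$ contains odd permutations. Since $C(n,I)$ is closed under conjugation, $\langle C(n,I)\rangle$ is a nontrivial normal subgroup of $S_n$ containing an odd permutation, so for $n\ge 7$ it cannot be contained in $A_n$ and must equal $S_n$. Consequently the graph has a single connected component, the degree $|C(n,I)|$ is a simple eigenvalue, and the second largest eigenvalue $\lambda_2$ of $\mathrm{Cay}(S_n,C(n,I))$ coincides with its strictly second largest eigenvalue.

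Next I would invoke Theorem~\ref{thm:n_n-1}. By part~(a), when $n$ is even the second largest eigenvalue is attained only by one of $(1^n)$, $(n-2,1^2)$ or $(2,1^{n-2})$; by part~(b), when $n$ is odd it is attained only by one of $(1^n)$, $(n-2,1^2)$, $(3,1^{n-3})$ or $(2^2,1^{n-4})$. The key observation is that in neither case does $(n-1,1)$ lie on the list, so $\lambda_{(n-1,1)}^I$ is strictly smaller than $\lambda_2$. Thus the strictly second largest eigenvalue is not attained by the standard representation $\rho_{(n-1,1)}$, which by definition means that $\mathrm{Cay}(S_n,C(n,I))$ fails the Aldous property.

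Since every technical inequality needed was already established in the proof of Theorem~\ref{thm:n_n-1}, there is no genuine obstacle here; the argument is a one-line deduction once that theorem is in hand. The only point requiring (brief) care is the connectivity remark above, which guarantees that ``strictly second largest'' and ``second largest'' coincide, so that the absence of $(n-1,1)$ from the relevant list really does rule out the Aldous property rather than leaving open some boundary case with a degenerate top eigenvalue.
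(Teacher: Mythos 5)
Your proposal is correct and follows exactly the route the paper intends: the paper offers no separate argument beyond the remark that Theorem~\ref{thm:n_n-1} implies the corollary, and your deduction---that $(n-1,1)$ is absent from both lists of partitions that can attain the second largest eigenvalue, hence the standard representation cannot attain it---is precisely that implication. Your additional connectivity check (that $C(n,I)$ contains odd permutations and generates $S_n$, so the second largest and strictly second largest eigenvalues coincide) is a sensible piece of care that the paper leaves implicit.
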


We conjecture that the second largest eigenvalue of $\mathrm{Cay}(S_n,C(n,I))$ in part (b) of Theorem \ref{thm:n_n-1} can only be achieved by $(1^n),(n-2,1^2)$ or $(2^2,1^{n-4})$:  

\begin{cx}
Suppose $n\ge 7$ is odd and $\{n-1,n\}\subseteq I\subseteq \{2,3,\ldots,n\}$. Then 
$$
\lambda_{(3,1^{n-3})}^I<\max\left\{\lambda_{(1^n)}^I, \lambda_{(n-2,1^2)}^I, \lambda_{(2^2, 1^{n-4})}^I\right\}.
$$ 
\end{cx}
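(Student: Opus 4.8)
The plan is to exploit the conjugate-partition relation of Lemma~\ref{lem:conjugate_character} to pin $\lambda_{(3,1^{n-3})}^I$ against $\lambda_{(n-2,1^2)}^I$, and then to reduce everything to one positive scalar. Since $(3,1^{n-3})=(n-2,1^2)'$, Lemma~\ref{lem:conjugate_character} gives $\tilde{\chi}_{(3,1^{n-3})}((k,1^{n-k}))=(-1)^{k-1}\tilde{\chi}_{(n-2,1^2)}((k,1^{n-k}))$, so by \eqref{eq:eigenvalue}
\[
\lambda_{(n-2,1^2)}^I-\lambda_{(3,1^{n-3})}^I=2\sum_{k\in I,\ k\ \mathrm{even}}\binom{n}{k}(k-1)!\,\tilde{\chi}_{(n-2,1^2)}((k,1^{n-k})).
\]
By Table~\ref{tab:tab1} the character $\tilde{\chi}_{(n-2,1^2)}$ is nonnegative on every cycle type and vanishes only on $(n-1)$- and $(n-2)$-cycles; as $n$ is odd, $n-2$ is odd, so the right-hand side is strictly positive as soon as $I$ contains an even length $k\le n-3$. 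In that case $\lambda_{(3,1^{n-3})}^I<\lambda_{(n-2,1^2)}^I$ and the statement follows. It therefore remains to treat the case where the only even member of $I$ is $n-1$, i.e.\ $I=\{n-1,n\}\cup O$ with $O\subseteq\{3,5,\ldots,n-2\}$; here the displayed difference is $0$, so $\lambda_{(3,1^{n-3})}^I=\lambda_{(n-2,1^2)}^I$, and I must show that one of $\lambda_{(1^n)}^I,\lambda_{(2^2,1^{n-4})}^I$ strictly exceeds this common value.

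For this I would compute the two differences $D_1:=\lambda_{(1^n)}^I-\lambda_{(n-2,1^2)}^I$ and $D_2:=\lambda_{(2^2,1^{n-4})}^I-\lambda_{(n-2,1^2)}^I$ termwise, using $(2^2,1^{n-4})=(n-2,2)'$ together with Table~\ref{tab:tab1} and Lemmas~\ref{lem:character_for_n_cycles} and \ref{lem:character_for_n-1_cycles}. The two fixed lengths $k=n-1$ and $k=n$ contribute constants: their combined contribution is $-n(n-3)!$ in $D_1$ and $2(n-4)!$ in $D_2$. The odd lengths $k\in O$ contribute proportionally: since $\tilde{\chi}_{(1^n)}=1$ on every odd-length cycle, the $k$-term of $D_1$ is $\binom{n}{k}(k-1)!\,(1-\tilde{\chi}_{(n-2,1^2)}((k,1^{n-k})))$, and a short computation from Table~\ref{tab:tab1} shows that for all odd $k$ with $3\le k\le n-2$
\[
\frac{1-\tilde{\chi}_{(n-2,1^2)}((k,1^{n-k}))}{\tilde{\chi}_{(n-2,1^2)}((k,1^{n-k}))-\tilde{\chi}_{(2^2,1^{n-4})}((k,1^{n-k}))}=\frac{n(n-3)}{2},
\]
a ratio independent of $k$. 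Setting $w_k:=\binom{n}{k}(k-1)!\,(\tilde{\chi}_{(n-2,1^2)}-\tilde{\chi}_{(2^2,1^{n-4})})((k,1^{n-k}))=\frac{2(n-4)!\,(2n-k-3)}{(n-k)!}>0$ and $S:=\sum_{k\in O}w_k$, the $k$-term of $D_1$ equals $\frac{n(n-3)}{2}w_k$ and that of $D_2$ equals $-w_k$, so combining with the constants gives
\[
D_1=-\,n(n-3)!+\frac{n(n-3)}{2}\,S,\qquad D_2=2(n-4)!-S.
\]

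Now $D_2>0$ exactly when $S<2(n-4)!$, while (using $\tfrac{n(n-3)}{2}\cdot 2(n-4)!=n(n-3)!$) $D_1>0$ exactly when $S>2(n-4)!$. Hence $\max\{D_1,D_2\}>0$—which is all that is needed—unless $S=2(n-4)!$ precisely, i.e.\ unless
\[
\sum_{k\in O}\frac{2n-k-3}{(n-k)!}=1,\qquad O\subseteq\{3,5,\ldots,n-2\}.
\]
This boundary equation is exactly where I expect the real obstacle to lie: on it all four eigenvalues $\lambda_{(3,1^{n-3})}^I,\lambda_{(n-2,1^2)}^I,\lambda_{(2^2,1^{n-4})}^I,\lambda_{(1^n)}^I$ coincide, so the strict inequality is decided entirely by whether this single Diophantine equation is solvable. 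The difficulty is genuine rather than cosmetic: although the $k=n-2$ term alone equals $\tfrac{n-1}{2}>1$ and the remaining terms decay factorially, the odd lengths can conspire to make the sum exactly $1$ (for instance $n=23$, $O=\{19\}$ gives $\frac{24}{4!}=1$, whence $I=\{19,22,23\}$ makes all four eigenvalues equal to $875\cdot 18!$). Thus the cleanest outcome of this plan is a complete reduction of the conjecture to the elementary question of when $\sum_{k\in O}(2n-k-3)/(n-k)!=1$ can hold; settling that question is the crux, and it shows that the strict inequality as stated must be read with care precisely at these boundary values.
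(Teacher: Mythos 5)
The statement you were asked to prove is stated in the paper only as a conjecture; the paper offers no proof of it, recording merely (Lemma~\ref{re:n-m,1^m}(d)) that $\lambda_{(3,1^{n-3})}^I\le\lambda_{(n-2,1^2)}^I$ with equality precisely when every even element of $I$ lies in $\{n-1,n-2\}$. So there is no in-paper argument to compare yours against. What you have actually done, and done correctly, is \emph{refute} the conjecture. I checked your two structural identities: for odd $k$ with $3\le k\le n-2$ one has
\[
1-\tilde{\chi}_{(n-2,1^2)}((k,1^{n-k}))=\frac{k(2n-k-3)}{(n-1)(n-2)},\qquad
\tilde{\chi}_{(n-2,1^2)}((k,1^{n-k}))-\tilde{\chi}_{(2^2,1^{n-4})}((k,1^{n-k}))=\frac{2k(2n-k-3)}{n(n-1)(n-2)(n-3)},
\]
so the ratio is indeed the $k$-independent constant $n(n-3)/2$; and the combined $k\in\{n-1,n\}$ contributions to $D_1$ and $D_2$ are $-n(n-2)!+(n-1)!-2(n-3)!=-n(n-3)!$ and $2(n-2)(n-4)!-2(n-3)!=2(n-4)!$ as you claim. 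Hence, in the residual case where $n-1$ is the only even element of $I$, the conjectured strict inequality holds if and only if $\sum_{k\in O}(2n-k-3)/(n-k)!\ne 1$, and your instance $n=23$, $O=\{19\}$, $I=\{19,22,23\}$ violates this: I verified directly from \eqref{eq:eigenvalue} that $\lambda_{(1^{23})}^I=\lambda_{(21,1^2)}^I=\lambda_{(3,1^{20})}^I=\lambda_{(2^2,1^{19})}^I=875\cdot 18!$, so all four candidate eigenvalues coincide and the conjecture fails.

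Two remarks. First, your argument is not merely a counterexample but a complete characterization: combined with the first half of your analysis (the conjugate-pair computation showing $\lambda_{(3,1^{n-3})}^I<\lambda_{(n-2,1^2)}^I$ whenever $I$ contains an even number at most $n-3$), it shows the conjectured inequality holds for a given $I$ exactly when either $I$ has an even element other than $n-1$ or $\sum_{k\in O}(2n-k-3)/(n-k)!\ne 1$, where $O=I\setminus\{n-1,n\}$. This is stronger and more informative than what the paper asks for, and the correct repair of the conjecture is to replace ``$<$'' by ``$\le$'' or to exclude the solutions of that equation. Second, be careful with the framing: you present this as a ``proof proposal'' whose ``real obstacle'' is the boundary equation, but since you exhibit a solution of that equation, the honest conclusion is that the statement is false as written, not that it remains to be settled.
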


Note that, by Lemma~\ref{re:n-m,1^m}, we already know that $\lambda_{(3,1^{n-3})}^I \le \lambda_{(n-2,1^2)}^I$ and the equality holds if and only if $I$ contains only odd numbers other than $n-1$ and $n-2$.


\bigskip
\noindent \textbf{Acknowledgement} 
\medskip

The first author was supported by the Melbourne Research Scholarship provided by The University of Melbourne.

\end{document}